\documentclass[12pt,a4paper,reqno]{amsart}

\usepackage{amsmath,amsfonts,amsthm,amsopn,cite,mathrsfs}
\usepackage{epsfig,verbatim}
\usepackage{subfigure}

\newcommand{\tfa}{time-frequency analysis}

\newcommand{\stft}{short-time Fourier transform}

\newcommand{\tf}{time-frequency}

\newcommand{\fif}{if and only if}
\newcommand{\tfs}{time-frequency shift}

\newcommand{\onb}{orthonormal basis}

\newcommand{\modsp}{modulation space}
\newcommand{\psdo}{pseudodifferential operator}

\newtheorem{tm}{Theorem}[section]
\newtheorem{lemma}[tm]{Lemma}
\newtheorem{prop}[tm]{Proposition}
\newtheorem{cor}[tm]{Corollary}

\newcommand{\rem}{\noindent\textsl{REMARK:}}

 \theoremstyle{definition}
 \newtheorem{definition}{Definition}

\newcommand{\beqa}{\begin{eqnarray*}}
\newcommand{\eeqa}{\end{eqnarray*}}

\newcommand{\field}[1]{\mathbb{#1}}
\newcommand{\bR}{\field{R}}        
\newcommand{\bN}{\field{N}}        
\newcommand{\bZ}{\field{Z}}        
\newcommand{\bC}{\field{C}}        
        %
        %


\newcommand{\vf}{\varphi}

 \def\cF{\mathcal{F}}              
 \def\cS{\mathcal{S}}
 
 \def\cH{\mathcal{H}}
 \def\cB{\mathcal{B}}
 
 \def\cG{\mathcal{G}}
 \def\cM{\mathcal{M}}

 \def\cC{\mathcal{C}}
 
 \def\cO{\mathcal{O}}

\def\rd{\bR^d}
\def\cd{\bC^d}

\def\rdd{{\bR^{2d}}}
\def\zdd{{\bZ^{2d}}}

\def\lrd{L^2(\rd)}

\def\mvv{M_v^1}
\def\mif{M^{\infty , 1}}

\def\Lpm{L ^p_m}
\def\Lpqm{L ^{p,q}_m}

\def\Mpqm{M_m^{p,q}}

\def\intrd{\int_{\rd}}
\def\intrdd{\int_{\rdd}}

\def\<{\left<}
\def\>{\right>}

\def\inv{^{-1}}

\def\mv1{M_v^1}


\newcommand{\vs}{\vspace{3 mm}}


\setlength{\topmargin}{-8mm}
\setlength{\headheight}{8pt}
\setlength{\textheight}{220mm}  

\setlength{\oddsidemargin}{0pt}
\setlength{\evensidemargin}{0pt}
\setlength{\textwidth}{148 mm}   

\newcommand{\jjj}{J_{1/\theta }}
\newcommand{\Mmpq}{M^{p,q}_m}
\newcommand{\Mmpqd}{M^{p,q}_m(\rd )}
\newcommand{\scal}[2]{\langle #1,#2\rangle}
\newcommand{\rr}[1]{\mathbb R^{#1}}
\newcommand{\cc}[1]{\mathbb C^{#1}}

\newcommand{\fy}{\varphi}

\begin{document}

\begin{abstract}
We study the range of  \tf\ localization operators acting on \modsp s
and prove a lifting theorem. As an application we also characterize
the range of Gabor multipliers, and, in the realm of complex analysis,
we characterize the range of certain Toeplitz operators on weighted
Bargmann-Fock spaces.  The main tools are the construction of
canonical isomorphisms between \modsp s of Hilbert-type and a refined
version of the spectral invariance of \psdo s. On the technical level
we prove a new class of inequalities for weighted gamma functions.  
\end{abstract}

\title[The Range of Localization Operators]{The Range of
Localization Operators  and\\ Lifting Theorems for  Modulation and Bargmann-Fock Spaces}

\author{Karlheinz Gr\"ochenig}

\address{Faculty of Mathematics \\
University of Vienna \\
Nordbergstrasse 15 \\
A-1090 Vienna, Austria}

\email{karlheinz.groechenig@univie.ac.at}

\author{Joachim Toft}

\address{Department of Computer science, Physics and Mathematics,
Linn{\ae}us University, V{\"a}xj{\"o}, Sweden}

\email{joachim.toft@lnu.se}

\subjclass[2000]{}
\date{}
\keywords{Localization operator, Toeplitz operator, Bargmann-Fock
  space, modulation space, Sj\"ostrand class, spectral invariance,
  Hermite function}
\thanks{ K.\ G.\ was
  supported in part by the  project P2276-N13  of the
Austrian Science Foundation (FWF)} 
\maketitle

\par

\section{Introduction}

\par

The precise description of the range of a linear operator is usually
difficult, if not impossible, because this amounts to a
characterization of which operator equations are solvable. 
In this paper we study the range of an important class of \psdo s,
so-called \tf\ localization operators, and we prove an isomorphism
theorem between \modsp s with respect to different weights. 

The guiding example to develop an intuition for  our  results 
is the class of multiplication operators.  Let $m\geq 0$ be a weight
function on $\rr d$ and define the weighted space
$L^p_m(\rr d)$ by the norm $\|f\|_{\Lpm} = \Big(\intrd |f(x)|^p m(x)^p \,
dx\Big)^{1/p} = \|f m \|_{L^p}$.
%
%
Let $\cM _a $ be the multiplication
operator defined by $\cM _a f  = a f$. Then $\Lpm $ is precisely the
range of the multiplication operator $\cM_{1/m}$.

We will prove an similar result for \tf\ localization operators
between weighted \modsp s. To set up terminology, let 
$\pi (z) g (t)  = e^{2\pi i \xi\cdot t} g(t-x)$  denote the \tfs\ by
$z= (x,\xi ) \in \rdd $ acting on a function $g$ on $\rd $. The
corresponding transform is the \stft\ of a function defined by 
$$
V_g f(z) = \intrd f(t) \overline{g(t-x)}\, e^{2\pi i \xi \cdot t} \,
dt = \langle f , \pi (z) g\rangle  \, . 
$$
The standard function spaces of \tfa\ are the \modsp s. The \modsp\
norms measure smoothness in the \tf\ space (phase space in the
language of physics) by imposing a norm on the \stft\ of a function
$f$. As a special case we mention the \modsp s $M^p_m(\rd )$ for $1\leq
p\leq \infty $ and a non-negative  weight function $m$. Let
$$
{h} (t) = 2^{d/4}e^{-\pi t^2}=2^{d/4}e^{-\pi t\cdot t},\qquad t\in \rd
$$
denote the (normalized) Gaussian. Then the \modsp\
$M^p_m (\rd )$ is defined by the norm  
$$
\|f\|_{M^p_m } = \|V_{{h} } f \|_{\Lpm }  \, .
$$
The localization operator $A^g_m$ with respect to the ``window'' $g$,
usually some test function, and the symbol or multiplier $m$ is
defined formally by the integral
$$
 A_m^{g}f = \int_{\mathbb{R}^{2d}} m(z) V_g f(z
)\pi (z) g \,  dz \, . 
 $$
Localization operators constitute an important class of \psdo s and
occur under different names such as Toeplitz operators or anti-Wick
operators. They were introduced by Berezin as a form of
quantization~\cite{Berezin71},  and are nowadays  applied in mathematical signal
processing for \tf\ masking of signals and for phase-space
localization~\cite{daub88}.  An equivalent form occurs in complex
analysis as Toeplitz operators on Bargmann-Fock
space~\cite{Cob01,BCo87,BCo94}. 
In hard analysis they are used to approximate \psdo s and in some
proofs of the sharp G{\aa}rding inequality and the Fefferman-Phong
inequality~\cite{Toft00,lerner03,LM06}. For the analysis of localization operators
with \tf\ methods we refer to \cite{CG03} and the references given
there, for a more analytic point of view we recommend~\cite{Toft04,Toft07}. 

\par

A special case of our main result can be formulated as follows. 
By an isomorphism between two Banach spaces $X$ and $Y$ we understand
a bounded and invertible operator  from $X$ onto $Y$. 

\par

\begin{tm} \label{basic}
  Let $m$ be a non-negative continuous  symbol on $\rdd $ satisfying $m(w +z)
\leq e^{a |w|} m(z)$ for $w, z \in \rdd $ and assume that $m$ is
radial in each \tf\ coordinate. Then for suitable test functions $g$
the localization operator $A^g _m$ is an isomorphism from the \modsp\
$M^{p}_\mu (\rd )  $ onto $M^{p} _{\mu / m} (\rd ) $ for all $1\leq p \leq
\infty $ and moderate weights $\mu $. 
\end{tm}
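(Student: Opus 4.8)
The plan is to view $A^g_m$ as an approximate multiplication by $m$ in phase space and to prove the isomorphism in two stages: first on the Hilbert space $M^2=L^2$, and then to lift the result to all $p$ and all moderate $\mu$ by spectral invariance. From the outset I reduce to the Gaussian window $g=h$, for which the inversion formula for the \stft\ gives $A^h_1=\mathrm{Id}$; passing to another admissible window perturbs $A^g_m$ only by a \psdo\ with symbol in a Sj\"ostrand-type class, which does not affect the conclusion. The boundedness $A^h_m\colon M^p_\mu\to M^p_{\mu/m}$ is then the routine half: writing the \stft\ as $V_h(A^h_mf)(w)=\int_{\rdd} m(z)\,V_hf(z)\,\langle\pi(z)h,\pi(w)h\rangle\,dz$ and combining the hypothesis $m(w+z)\le e^{a|w|}m(z)$ with the Gaussian off-diagonal decay $|\langle\pi(z)h,\pi(w)h\rangle|=e^{-\pi|z-w|^2/2}$, a Schur/convolution estimate yields $\|A^h_mf\|_{M^p_{\mu/m}}\le C\|f\|_{M^p_\mu}$ for every moderate $\mu$ and all $1\le p\le\infty$. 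Since $1/m$ obeys the same exponential moderateness, the companion operator $A^h_{1/m}$ is bounded as well.

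The core is invertibility, and here the Hilbert case is decisive. On $L^2$ the Bargmann transform turns $A^h_m$ into a Toeplitz operator on Fock space, and because $m$ is radial in each \tf\ coordinate this operator is diagonal in the Hermite basis $\{h_\alpha\}_{\alpha\in\mathbb{N}^d}$ with positive eigenvalues $\lambda_\alpha=\langle A^h_m h_\alpha,h_\alpha\rangle$, each an explicit radial integral of $m$ against Gaussian/Laguerre factors --- the weighted gamma functions of the abstract. The decisive and most delicate step, which I expect to be the main obstacle, is to establish two-sided bounds $c_1\,m(\sqrt\alpha)\le\lambda_\alpha\le c_2\,m(\sqrt\alpha)$, i.e.\ that the averaged eigenvalue recovers the symbol at the phase-space scale $|\alpha|^{1/2}$; this is exactly what the new gamma-function inequalities must furnish, and it rests on the exponential moderateness of $m$. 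The same estimates characterise the radial weighted Hilbert spaces $M^2_{1/m}$ and $M^2_m$ through their Hermite coefficients, so that $A^h_m\colon M^2\to M^2_{1/m}$ and $A^h_{1/m}\colon M^2\to M^2_m$ are isomorphisms --- the canonical isomorphisms of Hilbert type.

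To pass from $p=2$ to all $1\le p\le\infty$ and to arbitrary moderate $\mu$, I would not invert $A^h_m$ between distinct spaces directly, but form the compositions $P=A^h_{1/m}A^h_m$ and $Q=A^h_mA^h_{1/m}$, which act on the fixed spaces $M^p_\mu$ and $M^p_{\mu/m}$ respectively. By the approximate product rule for localization operators their principal symbol is $\tfrac1m\cdot m=1$, whence $P=\mathrm{Id}+R$ and $Q=\mathrm{Id}+R'$ with remainders in a weighted Sj\"ostrand-type algebra $\mathcal{A}$ of \psdo s that is bounded on every $M^p_\mu$ and spectrally invariant (the refined spectral invariance). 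The Hilbert step makes $P$ and $Q$ invertible on $L^2$; spectral invariance then forces $P^{-1},Q^{-1}\in\mathcal{A}$, hence bounded on all $M^p_\mu$. Therefore $P^{-1}A^h_{1/m}$ and $A^h_{1/m}Q^{-1}$ provide a left and a right inverse of $A^h_m$ on the whole $M^p$-scale, and $A^h_m\colon M^p_\mu\to M^p_{\mu/m}$ is an isomorphism for all $p$ and all moderate $\mu$. Apart from the gamma-function bounds, the remaining delicate point is to verify that these compositions really belong to a spectrally invariant algebra adapted to the exponential weight $m$.
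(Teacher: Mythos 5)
Your overall architecture is the paper's: diagonalize the Gaussian-window localization operator in the Hermite basis, control the eigenvalues by gamma-function inequalities to get the Hilbert-type isomorphisms, form the compositions $A_{1/m}A_m$ and $A_mA_{1/m}$, and upgrade to all $p$ and $\mu$ by spectral invariance of a weighted Sj\"ostrand class. For $g=h$ your route is even slightly leaner than the paper's, since $P=A^h_{1/m}A^h_m$ is then invertible on $L^2$ outright and the standard spectral invariance suffices. But there is one genuine gap: the opening reduction to the Gaussian window. The claim that passing from $h$ to a general admissible $g$ ``perturbs $A^g_m$ only by a \psdo\ with symbol in a Sj\"ostrand-type class, which does not affect the conclusion'' is not a valid argument: membership of $A^g_m-A^h_m$ in the symbol class makes it bounded, not small or compact, and invertibility is not stable under bounded perturbations. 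Everything in your Hilbert step that produces the \emph{unbalanced} isomorphism $A^g_m\colon L^2\to M^2_{1/m}$ (Hermite diagonalization, radial orthogonality of the monomials on Fock space) is genuinely special to $g=h$; for a general window the operator is not diagonal in any canonical basis and no such statement is proved. The paper circumvents this differently: for arbitrary $g\in M^1_v$ it only proves the \emph{balanced} isomorphism $A^g_m\colon M^2_{\sqrt m}\to M^2_{1/\sqrt m}$ from the coercivity $\langle A^g_mf,f\rangle=\|V_gf\,\sqrt m\|_2^2\asymp\|f\|_{M^2_{\sqrt m}}^2$, so that $T=A^g_{1/m}A^g_m$ is invertible on $M^2_{\sqrt m}$ rather than on $L^2$; the Gaussian-window canonical operators $J_{\sqrt m}$, $J_{1/\sqrt m}$ are then used only to conjugate $T$ down to $L^2$ (the ``refined spectral invariance''), after which the Wiener-type theorem applies. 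You need either this conjugation step or some other mechanism to reach general windows; as written, your proof establishes the theorem only for $g=h$.

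A secondary point: you posit the two-sided bound $\lambda_\alpha\asymp m(\sqrt\alpha)$ and correctly flag it as the delicate step, but you give no argument for the upper half. The paper proves a weaker statement that suffices, namely the almost-multiplicativity $\tau_{\alpha,s}(\theta)\,\tau_{\alpha,t}(\theta)\,\tau_{\alpha,-s-t}(\theta)\asymp 1$, whose upper bound comes for free from the $L^2$-boundedness of the composition $J_{\theta^s}J_{\theta^t}J_{\theta^{-s-t}}$ applied to the common eigenfunction $h_\alpha$, and whose lower bound is a mean-value argument on boxes of side $O(\sqrt{\alpha_j})$ where the Gamma densities $u^{\alpha_j}e^{-u}/\alpha_j!$ are bounded below, combined with the moderateness of $\theta$. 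Your pointwise two-sided bound is plausibly provable by the same concentration estimate plus moderateness, but it is a strictly stronger claim and would need its own upper-bound argument; the product form is the cheaper and sufficient route.
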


\par

We see that the  range of  a localization operator 
exhibits the same behavior as the multiplication operators. For the
precise formulation with all assumptions stated we refer to
Section~4. The above isomorphism theorem can also be intepreted as a
lifting theorem, quite in  analogy  with the lifting property of
Besov spaces~\cite{triebel83}.  However, whereas Besov spaces $\dot{B}^{p,q}_s$
with different
smoothness $s$ are isomorphic via Fourier multipliers, the case of
\modsp s is  more subtle because in this case the \tf\
smoothness is parametrized by a weight function on $\rdd $ rather than
by a single number$s \in \bR $. The lifting operators between \modsp s are precisely
the localization operators with the weight $m$. 

\par

The isomorphism theorem for localization operators stated above is
preceded by many contributions, which were already listed in
~\cite{GT10}. In particular, in~\cite{CG06} it was  shown  that for
moderate symbol functions with 
subexponential growth the localization operator is a Fredholm
operator, i.e., it differs from an invertible operator only by a
finite-rank operator. Here we show that such operators are even
isomorphisms between the corresponding \modsp s, even under weaker
conditions than needed for the Fredholm property.  In the companion
paper~\cite{GT10} we proved the isomorphism property for weight
functions of \emph{polynomial type}, i.e.,
\begin{equation}\label{poltype}
c (1+|z|)^{-N} \leq m(z) \leq C
((1+|z|)^{N}.
\end{equation}

\par

The contribution of Theorem~\ref{basic} is the extension of the class
of possible weights. In particular, the isomorphism property holds
also for subexponential weight functions of the form $m(z) =
e^{a|z|^b}$  or $m(z) = e^{a|z|/\log (e+|z|)}$ for $a>0, 0<b<1$, which
are often considered in \tfa . We remark that this generality comes at
the price of imposing the  radial symmetry on the weight $m$.
Therefore, our  results  are not applicable to  all situations covered
by \cite{GT10}, where no radial symmetry is required.  

\par
 
Although the extension to weights of ultra-rapid  growth looks like a
routine generalization, it is not. The proof of Theorem~\ref{basic}
for weights of polynomial type is based on  a deep theorem of Bony and
Chemin~\cite{BC94}.  They  construct a one-parameter group of isomorphisms from
$\lrd $ onto the \modsp s $M^{2}_{m^t}(\rd )$ for $t\in  \bR
$. Unfortunately, the pseudodifferential calculus developed in 
\cite{BC94} requires polynomial growth conditions, and,  to our
knowledge, an   extention
to symbols of  faster growth is  not available.

On a technical level, 
our main contribution  is the construction of  canonical
isomorphisms  between $\lrd $ and the \modsp s $M^2_m(\rd )$  of Hilbert
type.  In fact, such an isomorphism is given by a \tf\ localization
operator with Gaussian window ${h} (t) = 2^{d/4}e^{-\pi t^2}$. 

\par

\begin{tm} \label{canon0}
Assume that $m$ is a continuous  moderate weight function of at most
exponential growth and radial in each \tf\ coordinate. Then the
localization operator $A^{h} _m$ is an isomorphism from $\lrd $ onto
$M^2_{1/m} (\rd ) $. 
\end{tm}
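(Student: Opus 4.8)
The plan is to \emph{diagonalize} $A^{h}_m$ by the Hermite functions and to read off the isomorphism from a matching two-sided estimate for its eigenvalues. Write the Hermite functions as $h_\alpha$, $\alpha\in\bN^d$, normalized in $\lrd$, so that the Gaussian window is $h=h_0$. The weak form of a localization operator gives $\langle A^{h}_m f,f\rangle=\intrdd m(z)\,|V_h f(z)|^2\,dz$, and the structural fact I would use is that for the Gaussian window the squared \stft\ $|V_h h_\alpha(z)|^2$ factors over the $d$ \tf\ coordinates and is radial in each of them: writing $z=(z_1,\dots,z_d)$ with $z_j\in\rr2$, one has $|V_h h_\alpha(z)|^2=\prod_{j=1}^d \frac{(\pi|z_j|^2)^{\alpha_j}}{\alpha_j!}\,e^{-\pi|z_j|^2}$ up to normalization (equivalently, under the Bargmann transform $A^{h}_m$ becomes the Toeplitz operator with symbol $m$ and $h_\alpha$ corresponds to the monomial $z^\alpha$). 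Since $m$ is radial in each coordinate, $A^{h}_m$ commutes with the coordinatewise phase-space rotations that fix each $h_\alpha$ up to a scalar, so the $h_\alpha$ are eigenfunctions, $A^{h}_m h_\alpha=\lambda_\alpha h_\alpha$, with $\lambda_\alpha=\intrdd m(z)\,|V_h h_\alpha(z)|^2\,dz>0$.

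The heart of the argument is a two-sided estimate for these eigenvalues. The substitution $u_j=\pi|z_j|^2$ turns each factor above into a $\mathrm{Gamma}(\alpha_j+1,1)$ density, so $\lambda_\alpha$ is the average of $m$ against a product of Gamma densities that concentrate at $u_j=\alpha_j$. Let $z_\alpha\in\rdd$ be the ``classical'' phase-space point whose $j$-th coordinate has modulus $\sqrt{\alpha_j/\pi}$. The target is the uniform equivalence $c\,m(z_\alpha)\le\lambda_\alpha\le C\,m(z_\alpha)$. Using moderateness, $m(w+z)\le C e^{a|w|}m(z)$, to compare $m(z)$ with $m(z_\alpha)$ reduces this to showing that the weighted Gamma integrals $\intrdd e^{a\abs{|z|-|z_\alpha|}}\prod_{j=1}^d\frac{(\pi|z_j|^2)^{\alpha_j}}{\alpha_j!}\,e^{-\pi|z_j|^2}\,dz$ stay bounded, and are bounded below on a fixed neighborhood of the peak, uniformly in $\alpha$. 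This is precisely the new class of inequalities for weighted gamma functions announced in the abstract, and it is the \emph{main obstacle}: for polynomial weights it is elementary, but for subexponential and exponential weights $m$ grows like $e^{c\sqrt{u}}$ in the variable $u_j$, whose tail must be controlled carefully against the Gaussian-type concentration of the Gamma density of width $\sqrt{\alpha_j}$ about $\alpha_j$, and extracting a genuine equivalence rather than a one-sided bound requires the sharp form of these estimates.

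To assemble the isomorphism I would invoke the characterization of the Hilbert-type \modsp\ by weighted Hermite coefficients: for a moderate weight $v$ that is radial in each coordinate, $\|f\|_{M^2_v}^2\asymp\sum_\alpha|\langle f,h_\alpha\rangle|^2\,v(z_\alpha)^2$, obtained by transporting to a weighted Fock space via the Bargmann transform, where the $h_\alpha$ become the monomials and the norm becomes a weighted $\ell^2$-norm on coefficients. Applying this with $v=1/m$ and combining with the two previous steps, any $f=\sum_\alpha c_\alpha h_\alpha\in\lrd$ satisfies $A^{h}_m f=\sum_\alpha \lambda_\alpha c_\alpha h_\alpha$, whence $\|A^{h}_m f\|_{M^2_{1/m}}^2\asymp\sum_\alpha \lambda_\alpha^2\,|c_\alpha|^2\,m(z_\alpha)^{-2}\asymp\sum_\alpha|c_\alpha|^2=\|f\|_{\lrd}^2$. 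Thus $A^{h}_m$ is bounded above and below as a map $\lrd\to M^2_{1/m}(\rd)$; since the ratios $\lambda_\alpha\,m(z_\alpha)^{-1}$ are bounded away from $0$ and $\infty$ and $\{h_\alpha\}$ is a basis of $M^2_{1/m}(\rd)$, the map is also onto, and hence an isomorphism.
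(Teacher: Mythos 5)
Your architecture is the same as the paper's: diagonalize $A^{h}_m$ in the Hermite basis (the eigenvalue formula and the factorization of $|V_hh_\alpha|^2$ are exactly the paper's \eqref{eq:can1} and Proposition~\ref{p9}), express the $M^2$-norms through weighted Hermite coefficients (Proposition~\ref{p8}), and reduce everything to a uniform two-sided control of the weighted gamma integrals $\tau_\alpha(\theta)=\langle J_\theta h_\alpha,h_\alpha\rangle$. The divergence — and the one genuine gap — is in \emph{which} estimate on these integrals you rely on. You assert the pointwise asymptotic $\lambda_\alpha\asymp m(z_\alpha)$ (and, implicitly, $\tau_\alpha(v^2)\asymp v(z_\alpha)^2$ for the norm equivalence), whose lower half is the elementary box/mean-value argument, but whose upper half is precisely the uniform tail bound $\int e^{a|\,|z|-|z_\alpha|\,|}\prod_j f_{\alpha_j}\,du\le C$ that you name as the main obstacle and then do not prove. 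That bound is true (a Laplace-method computation: under $u_j\mapsto\sqrt{u_j/\pi}$ the Gamma density of width $\sqrt{\alpha_j}$ becomes $O(1)$-concentrated, and the far tails beat $e^{a\sqrt{u}}$), but as written your proof has a hole exactly at its hardest point.

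The paper avoids this tail estimate altogether, and this is worth internalizing. It never proves $\tau_\alpha(m)\asymp m(z_\alpha)$; it proves only the three-fold product relation $C^{-1}\le\tau_{\alpha,s}(\theta)\tau_{\alpha,t}(\theta)\tau_{\alpha,-s-t}(\theta)\le C$ of Proposition~\ref{p11b}. There the \emph{upper} bound comes for free: $h_\alpha$ is a common eigenfunction of $J_{\theta^s}$, $J_{\theta^t}$, $J_{\theta^{-s-t}}$, and the composition is bounded on $\lrd$ by the mapping properties of localization operators (Lemma~\ref{l:1}), so the product of eigenvalues is bounded by an operator norm — no integral estimate needed. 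Only the lower bound requires the box argument, and it only uses values of $\theta$ at points within a fixed distance of each other, hence only moderateness on a compact set. Because Proposition~\ref{p8} gives the \emph{exact} identity $\|f\|_{M^2_\theta}^2=\sum_\alpha|c_\alpha|^2\tau_\alpha(\theta^2)$ (with weights $\tau_\alpha(\theta^2)$, not $\theta(z_\alpha)^2$), the product relation with $s=t=1$ is all that is needed to conclude, exactly along the lines of your final display. So: either supply the missing upper tail estimate in full (doable, but delicate for exponential $v$), or replace the pointwise asymptotic by the paper's product relation and get the upper bound from $L^2$-boundedness. Your surjectivity argument is fine once one cites the density of the Hermite span in $M^2_{1/m}$ (Lemma~\ref{l6}(b)).
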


\par

This theorem replaces the result of Bony and Chemin. The main point is
that Theorem~\ref{canon0} also  covers symbols of ultra-rapid growth.  Its proof requires most
of our efforts. We take a \tf\ approach rather than using classical
methods from pseudodifferential calculus. 
In the course of its proof we will establish  new
 inequalities for weighted gamma functions of the form
$$
C\inv \leq    \int _0^\infty \theta (\sqrt{x/\pi}) \frac{x^n}{n!} e^{-x} dx
  \, \int _0^\infty \frac{1}{\theta (\sqrt{x/\pi})}
  \frac{x^n}{n!} e^{-x} dx \leq C  \text{ for all } n\in \bN \cup
  \{0\} 
 \, .
$$

The proof method for Theorem~\ref{basic} may be of interest in
itself. 
Once the canonical isomorphisms are in place (Theorem \ref{canon0}), the proof of
Theorem~\ref{basic} proceeds as follows. It is easy to establish that
$A_m$ is an isomorphism from $M^2_{\sqrt m}$ to $M^2_{1/\sqrt m}$,
so  that 
the composition $A^g_{1/m} A^g_m$ is an isomorphism on
$M^2_{\sqrt{m}}$. Using the canonical isomorphisms of
Theorem~\ref{canon0}, one shows next that the operator $ V= A^{h} _{\sqrt{m}}
A^g_{1/m} A^g_m A^{h} _{1/\sqrt{m}}$ is an isomorphism on $L^2 $ and
that the (Weyl) symbol of this operator belongs to a generalized
Sj\"ostrand class. After these technicalities we apply the machinery
of spectral invariance of \psdo s  from ~\cite{gro06} to conclude that 
$V$ is invertible on all \modsp s $M^p_\mu $ (with $\mu $ compatible
with the conditions on $m$ and the window $g$). Since $V$ is a
composition of three isomorphisms, we then deduce that $A^g_m$ is an
isomorphism from $M^p_\mu $ onto $M^p_{\mu /m}$. 

\par

As an application  we prove (i)   a new isomorphism theorem for so-called
Gabor multipliers, which  are a discrete version of \tf\ localization
operators, and (ii) an isomorphism theorem for Toeplitz operators
between weighted Bargmann-Fock spaces of entire functions. To
formulate this result more explicitly, for  a non-negative weight
function $\mu$ and $1\leq p\leq \infty $,  let $\cF ^{p}_\mu (\cc d)$ be the
space of entire functions of $d$ complex variables defined by the norm
$$
\|F\|_{\cF^{p}_\mu}^p =\int_{\cd }    |F(z)|^p \mu(z)^p
 e^{-p\pi |z|^2/2} dz   <\infty \, ,
$$
and let $P$ be the usual projection from $L^1_{\mathrm{loc}}
(\mathbb{C}^d)$ to entire functions on $\cd $. The Toeplitz operator
with symbol $m$ acting on a function $F$  is defined to  be 
$
T_mF = P(mF)$. Then we show that  \emph{ the Toeplitz
operator $T_m$ is an isomorphism  from $\cF ^{p,q}_{\mu }(\cc d)$ onto $\cF
^{p,q}_{\mu /m}(\cc d)$ for every $1\leq p,q\leq \infty $ and every  moderate weight
$\mu $. }

The paper is organized as follows: In Section~2 we provide the precise
definition of \modsp s and  localization operators, and we  collect their
basic properties. In particular, we investigate the Weyl symbol
of the composition of two localization operators. In Section~3, which
contains our main contribution, we construct the canonical
isomorphisms and prove Theorem~\ref{canon0}. In Section~4 we derive a
refinement of the spectral invariance of \psdo s and prove the general
isomorphism theorem, of which Theorem~\ref{basic} is a special
case. Finally in Section~5 we give applications to Gabor multipliers
and Toeplitz operators. 

\par

\section{Time-Frequency Analysis and Localization Operators}

\par

We first set up the vocabulary of \tfa . For  the notation 
we follow the book \cite{book}. 
For  a point $z=(x,\xi )\in \rdd $ in phase space  the
\tfs\ of a function $f$ is $\pi (z)f(t) = e^{2\pi i \xi \cdot t}
f(t-x)$, $t\in \rd $. 

\vs 
\textbf{The \stft{:}} Fix a non-zero function $g\in L^1_{loc}(\rr d)$ which is usually
taken in a suitable space of Schwartz functions. Then the \emph{\stft} of a
function or distribution $f$ on $\rr d$ is defined to be
\begin{equation}
  \label{eq:can13}
V_gf (z) = \langle f, \pi (z) g\rangle  \qquad z\in \rdd \, ,  
\end{equation}
provided the scalar product is well-defined for every $z\in \rr
{2d}$. Here $g$ is called a ``window function''.  
If $f\in L^p(\rr d) $ and $g\in L^{p'}(\rr d)$ for the conjugate
parameter $p'=p/(p-1)$, then the \stft\ can
be written in integral form as 
$$
V_gf(z) = \intrd f(t) \overline{g(t-x)}\, e^{-2\pi i \xi \cdot t} \,
dt .
$$
In general, the  bracket $\langle \cdot , \cdot \rangle $ extends the inner product
on $\lrd $ to any dual pairing between a distribution space and its
space of test functions, for instance $g\in \cS (\rd )$ and $f\in \cS
' (\rd )$, but \tfa\ often needs larger distribution spaces. 

\par

\vs 

\textbf{Weight functions}: We call a locally bounded, strictly
positive  weight function $m$ on $\rdd $ \emph{moderate}, if  
$$
\sup _{z\in \rdd}\left ( \frac{m(z+y)}{m(z)},\frac{m(z-y)}{m(z)} \right ) :=  v(y) <\infty  \qquad \text{
  for all } y\in \rdd \, .
$$
The resulting function $v$ is a submultiplicative weight function,
i.e., $v$ is even and satisfies $v(z_1+z_2) \leq v(z_1) v(z_2)$ for all $z_1, z_2 \in \rdd $,
and then $m$ satisfies 
\begin{equation}
  \label{eq:4}
 m(z_1+z_2) \leq v(z_1) m(z_2) \qquad \text{ for all } z_1,z_2 \in
 \rdd \, . 
\end{equation}

\par

Given a submultiplicative weight function $v$ on $\rdd $, any weight
satisfying the condition~\eqref{eq:4} is called $v$-moderate. For a
fixed submultiplicative function $v$ the set
$$
\cM _v := \{m \in L^\infty _{loc}( \rdd )\,   :\,  0<  m(z_1+z_2) \leq v(z_1) m(z_2)\,\, \forall z_1, z_2 \in
\rdd \}
$$
 contains all $v$-moderate weights. 

\par

We will use several times that every $v$-moderate  weight $m \in \cM _v$
satisfies the following bounds:
\begin{equation}
  \label{eq:25}
  \frac{1}{v(z_1-z_2)} \leq \frac{m (z_1)}{m (z_2)} \leq
  v(z_1 - z_2)  \qquad \text{ for all } z_1,z_2 \in \rdd \, .
\end{equation}
This follows from \eqref{eq:4} by replacing $z_1$ with $z_1-z_2$. 


\vs 

\textbf{Modulation spaces} $\Mpqm $ for arbitrary weights: For the general
definition of \modsp s we choose the Gaussian function ${h} (t) =
2^{d/4} e^{-\pi t\cdot t}$ as the canonical window function. Then the
\stft\ is defined for arbitrary elements in the Gelfand-Shilov
space $(S^{1/2}_{1/2})'(\rr d)$ of generalized functions. 
The \modsp\ $\Mmpq (\rd ) $, $1\leq p,q <\infty $ consists of all elements $f\in (S^{1/2}_{1/2})'(\rr d)$ such that the norm 
\begin{equation}
  \label{eq:can16}
  \|f\|_{\Mmpq } = \Big(\intrd \Big( \intrd |V_{h} f(x,\xi )|^p
  m(x,\xi )^p \, dx\Big)^{q/p} \, d\xi \Big)^{1/q} = \|V_{{h} }
  f\|_{\Lpqm } \,         
\end{equation}
is finite. If $p=\infty $ or $q=\infty $, we make the usual
modification and replace the integral by the supremum norm $\| \cdot
\|_{L^\infty}$. If $m=1$, then we usually write $M^{p,q}$ instead of $M^{p,q}_m$. We also set $M^{p}_m=M^{p,p}_m$ and $M^{p}=M^{p,p}$.

\par

The reader who does not like general distribution spaces,
may interprete $\Mmpqd $ as the completion of the finite linear
combinations  of \tfs s $\cH _0 = \mathrm{span}\, \{ \pi (z) {h} :
z\in \rdd \}$ with respect to the $\Mmpq $-norm for $1\leq p,q
<\infty$ and as a weak$^*$-closure, when $p=\infty $ or $q=\infty $. 
These issues arise only for extremely rapidly decaying weight
functions. If $m\geq 1$ and $1\leq p,q \leq 2$, then $\Mmpqd$ is in
fact  a
subspace of $\lrd $. If $m$ is of polynomial type (cf. \eqref{poltype}), then $\Mmpqd $ is
a subspace of tempered distributions. This is the case that is usually
considered, although the theory of \modsp s was developed from the
beginning to include arbitrary moderate weight functions~\cite{fg89jfa,book,gro07c}. 

\par

\vs

\textbf{Norm equivalence}: Definition~\eqref{eq:can16} uses the Gauss function
as the canonical window. The definition of \modsp s, however, does not
depend on the particular choice of the window.
More precisely, if $g\in \mvv $, $g\neq 0$, and  $m\in \cM _v$, then
there exist constants $A,B>0$ such that 
\begin{equation}
  \label{eq:18}
A\, \|f\|_{\Mmpq } \leq \|V_gf\|_{\Lpqm } \leq B \|f\|_{\Mmpq } = B \|V_{h} f\|_{\Lpqm }  \, .  
\end{equation}
We will usually write 
$$
\|V_gf\|_{\Lpqm } \asymp \|f\|_{\Mmpq } 
$$
for the equivalent norms.

\par
\vs

\textbf{Localization operators:} Given a non-zero  window function $g\in M^1_v(\rd )$ and a
symbol or multiplier $m$ on $\rr {2d}$, the localization operator $A_m^{g}$ is
defined informally by
\begin{equation}
  \label{eq:1}
 A_m^{g}f = \int_{\mathbb{R}^{2d}} m(z) V_g f(z
)\pi (z) g \,  dz \, , 
  \end{equation}
provided the integral exists.
  A useful alternative  definition of $A^g_m$ is the weak definition 
\begin{equation}
  \label{eq:f1}
\langle A_m^g  f,k\rangle _{L^2(\rd )}  = \langle m
  {V}_{g} f,  {V}_{g} k\rangle _{L^2(\rdd
    )} \, .
\end{equation}
While in general the symbol $m$ may be a distribution in a \modsp\ of
the form $M^\infty _{1/v}(\rdd )$~\cite{CG03,Toft07}, we will investigate
only localization operators whose  symbol is a
moderate weight function. 

\par

Taking the \stft\ of \eqref{eq:1}, we find that 
$$
V_g(A^g_m f)(w) = \int _{\rdd } m(z) V_gf(z) \langle \pi (z) g, \pi
(w)g\rangle \, dz = \big( (m V_gf) \,  \natural \, V_gg \big)(w) \, ,
$$
with the usual  twisted convolution $\natural$ defined by 
$$
( F \, \natural \, G)(w) = \intrdd F(z) G(w-z) e^{2\pi i z_1 \cdot (z_2-w_2) }\,  dz .
$$

\par

Since
$$
F\mapsto \int _{\rdd }  F(z) \langle \pi (z) g, \pi
(\cdot )g\rangle \, dz
$$
is the projection from arbitrary tempered distributions on $\rdd
$ onto functions of the form $V_g f$ for some distribution $f$, the
localization operator can been seen as the composition of a
multiplication operator and the projection onto the space of \stft
s. In this light, localization operators resemble the classical
Toeplitz operators, which are multiplication operators following by a
projection onto analytic functions. Therefore they  are sometimes called Toeplitz
operators~\cite{GT10,Toft01,Toft07}. If the window $g$ is chosen to be 
the Gaussian, then this formal similarity can be made more
precise. See Proposition~\ref{p21}. 

\subsection{Mapping Properties of Localization Operators}

The mapping properties of localization operators on \modsp s resemble
closely the mapping properties of multiplication operators between
weighted $L^p$-spaces. The boundedness of localization operators has  been
investigated on many levels of generality~\cite{CG03,Toft04,wong02}. We will use the
following boundedness result from ~\cite{CG06,Toft07}.   

\par

\begin{lemma}\label{l:1}
Let $m\in \cM _v $ and $\mu \in \cM _w$.   Fix $g \in M^1_{vw}(\rd )
$.  Then the localization operator $A_{1/m}^g $ is bounded from
  $M^{p,q}_\mu (\rd )$ to $M^{p,q}_{\mu m} (\rr d)$. 
\end{lemma}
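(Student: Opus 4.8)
The plan is to reduce the asserted boundedness to a weighted convolution estimate by means of the \stft. By the norm equivalence~\eqref{eq:18}, it suffices to bound $\|V_g(A^g_{1/m}f)\|_{L^{p,q}_{\mu m}}$ in terms of $\|V_gf\|_{L^{p,q}_\mu}$, provided $g$ lies in the appropriate weighted $M^1$-space for each of the two weights that occur. First I would invoke the formula for the \stft\ of a localization operator derived above, which for the symbol $1/m$ reads
$$
V_g(A^g_{1/m}f)(w) = \Big( \big( \tfrac1m\, V_g f\big) \natural V_g g \Big)(w) \, ,
$$
and then dominate the twisted convolution pointwise by the ordinary convolution. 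Since the kernel $e^{2\pi i z_1\cdot(z_2-w_2)}$ has modulus one, $|F\natural G|\le |F|*|G|$, so that $|V_g(A^g_{1/m}f)(w)| \le \big( |\tfrac1m V_gf| * |V_gg|\big)(w)$.

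Next I would apply the weighted Young inequality for mixed-norm Lebesgue spaces. Because $\mu \in \cM_w$ and $m \in \cM_v$, their product is $vw$-moderate, i.e. $\mu m \in \cM_{vw}$; and for any $\nu\in\cM_{vw}$ one has the convolution relation
$$
\|F * G\|_{L^{p,q}_{\nu}} \le C\, \|F\|_{L^{p,q}_{\nu}}\, \|G\|_{L^1_{vw}} \, ,
$$
which follows from the moderateness bound $\nu(w) \le (vw)(w-z)\,\nu(z)$ together with the unweighted estimate $L^1 * L^{p,q} \to L^{p,q}$. Applying this with $\nu = \mu m$, $F = \tfrac1m V_g f$ and $G = V_g g$ gives
$$
\|V_g(A^g_{1/m}f)\|_{L^{p,q}_{\mu m}} \le C\, \big\|\tfrac1m\, V_g f\big\|_{L^{p,q}_{\mu m}}\, \|V_g g\|_{L^1_{vw}} \, .
$$
The crucial algebraic cancellation is $\tfrac1m \cdot \mu m = \mu$, whence $\|\tfrac1m V_gf\|_{L^{p,q}_{\mu m}} = \|V_gf\|_{L^{p,q}_\mu}$; and the factor $\|V_g g\|_{L^1_{vw}} \asymp \|g\|_{M^1_{vw}}$ is finite by hypothesis, so it contributes only a constant depending on $g$.

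It remains to pass back to \modsp\ norms on both sides. Since every submultiplicative weight satisfies $v,w\ge 1$, we have the inclusion $M^1_{vw}\subseteq M^1_w$, so~\eqref{eq:18} yields $\|V_gf\|_{L^{p,q}_\mu}\asymp\|f\|_{M^{p,q}_\mu}$ on the domain; and because $g\in M^1_{vw}$ with $\mu m\in\cM_{vw}$, the same equivalence gives $\|V_g(A^g_{1/m}f)\|_{L^{p,q}_{\mu m}}\asymp\|A^g_{1/m}f\|_{M^{p,q}_{\mu m}}$ on the range. Chaining these equivalences with the estimate above produces $\|A^g_{1/m}f\|_{M^{p,q}_{\mu m}}\le C\|f\|_{M^{p,q}_\mu}$, which is the claim. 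The step requiring the most care is the weight bookkeeping: one must verify that the single hypothesis $g\in M^1_{vw}$ simultaneously licenses the convolution relation at the level $vw$ and both norm equivalences (for $\mu$ on the domain, for $\mu m$ on the range). This is exactly what the product rule $\mu m\in\cM_{vw}$ and the inclusion $M^1_{vw}\subseteq M^1_w$ secure, while the twisted-convolution structure is harmless since only its modulus enters the estimate.
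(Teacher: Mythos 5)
Your argument is correct, and it is worth noting that the paper itself offers no proof of this lemma at all: it is quoted as a known boundedness result from the cited literature, and the only textual hint at how it is proved there is the remark following the statement, which speaks of making sense of $V_g f$ on the domain and of $V_g k$ for $k$ in the dual $M^{p',q'}_{1/(\mu m)}$ of the target --- i.e.\ the weak definition \eqref{eq:f1} combined with H\"older's inequality, $|\langle A^g_{1/m}f,k\rangle| \le \|\tfrac1m V_gf\|_{L^{p,q}_{\mu m}}\|V_gk\|_{L^{p',q'}_{1/(\mu m)}}$. Your route is the other standard one: the twisted-convolution identity $V_g(A^g_{1/m}f)=(\tfrac1m V_gf)\natural V_gg$, the trivial pointwise domination of $\natural$ by $*$, and weighted Young. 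The bookkeeping is all sound: $\mu m\in\cM_{vw}$ and $1/(\mu m)\in\cM_{vw}$ follow from the product rule, $v\ge 1$ for any submultiplicative weight gives $M^1_{vw}\subseteq M^1_w$, and the cancellation $\tfrac1m\cdot\mu m=\mu$ is exactly the point. Your version has the small advantage of treating the endpoint cases $p=\infty$ or $q=\infty$ without worrying about which space is the dual of which, whereas the duality version dispenses with the convolution identity; both need the same hypothesis $g\in M^1_{vw}$. The only thing I would add for completeness is a sentence justifying the formal manipulations (Fubini in deriving the $\natural$ identity, and the meaning of $A^g_{1/m}f$ itself): one first works on a dense subspace of nice $f$ where all integrals converge absolutely --- which your final estimate retroactively guarantees --- and then extends by density, with the usual weak$^*$ modification when $p$ or $q$ is infinite.
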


\par

\rem\ The condition on the window $g$ is required to make sense of
$V_g f$ for $f$ in the domain space $M^{p,q}_\mu (\rr d) $ and of $V_gk$ for
$k$ in the dual $M^{p',q'}_{1/(\mu m)}(\rd )=
(M^{p,q}_{\mu m })'$  of the target space $M^{p,q}_{\mu m}$ for the full
range of parameters $p,q \in [1,\infty ]$. For fixed $p,q\in [1,\infty
]$ weaker conditions  may suffice, because  the norm equivalence
\eqref{eq:18} still holds after relaxing  the condition $g\in M^1_v$  into
$g\in M^{r}_v$ for $r\le \min (p,p',q,q')$~\cite{Toft08}. 

\medspace

On a special pair of \modsp s,  $A_{1/m}^g $ is even an isomorphism~\cite{GT10}. 

\par

\begin{lemma}\label{l2}
  Let $g \in \mvv $, $m\in \cM _v$, and set $\theta =
  m^{1/2}$. Then $A_m^g $ is an isomorphism from $M^{2}_{\theta}(\rd )
  $ onto $M^{2}_{1/\theta }(\rd )$.

Likewise $A_{1/m}$ is an isomorphism from $M^{2}_{1/\theta}(\rd )
  $ onto $M^{2}_{\theta }(\rd )$.   Consequently the composition $A^g_{1/m}
  A^g_m$ is an isomorphism on $M^{2}_\theta (\rd )$,  and  $  A^g_m A^g_{1/m}
 $ is an isomorphism on $M^{2}_{1/\theta } (\rd )$. 
\end{lemma}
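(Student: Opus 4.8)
The plan is to recognize $A^g_m$, via its weak definition \eqref{eq:f1}, as the operator induced by a coercive Hermitian sesquilinear form on the Hilbert space $M^2_\theta (\rd )$, and then to read off the isomorphism from the Lax--Milgram theorem together with the duality $\big(M^2_\theta (\rd )\big)' = M^2_{1/\theta }(\rd )$. Writing $m=\theta ^2$ and letting $k$ range over $M^2_\theta (\rd )$ in \eqref{eq:f1}, I would introduce the form
\[
a(f,k)=\scal{A^g_m f}{k}_{\lrd }=\scal{m\,\vgf }{V_gk}_{L^2(\rdd )}
=\intrdd \theta (z)^2\,\vgf (z)\,\overline{V_gk(z)}\,dz=\scal{\vgf }{V_gk}_{L^2_\theta (\rdd )}\, .
\]
Since $m\geq 0$, this form is Hermitian and nonnegative; the point of the proof is simply that it is bounded and coercive, so that it is an inner product equivalent to the one of $M^2_\theta (\rd )$.

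Both estimates are nothing but the window-independence of the \modsp\ norm. Because $m\in \cM _v$ we have $\theta =m^{1/2}\in \cM _v$ (indeed $\theta $ is $v^{1/2}$-moderate), and since $g\in \mvv $ the norm equivalence \eqref{eq:18} applies with weight $\theta $ and window $g$, giving $\nm{\vgf }{L^2_\theta }\asymp \nm{f}{M^2_\theta }$. Cauchy--Schwarz then yields $|a(f,k)|\leq \nm{\vgf }{L^2_\theta }\,\nm{V_gk}{L^2_\theta }\leq C\,\nm{f}{M^2_\theta }\,\nm{k}{M^2_\theta }$, while coercivity is immediate from $a(f,f)=\nm{\vgf }{L^2_\theta }^{2}\asymp \nm{f}{M^2_\theta }^{2}$. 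Recalling the standard duality $\big(M^2_\theta (\rd )\big)'=M^2_{1/\theta }(\rd )$ with respect to the extension of the $\lrd $-pairing, the map $f\mapsto a(f,\cdot )=\scal{A^g_m f}{\cdot }$ is exactly $A^g_m$ viewed as an operator $M^2_\theta (\rd )\to \big(M^2_\theta (\rd )\big)'=M^2_{1/\theta }(\rd )$, and the Lax--Milgram theorem makes it an isomorphism.

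For the remaining assertions I would rerun the same argument with $1/m$ in place of $m$: by \eqref{eq:25} the reciprocal $1/m$ is again $v$-moderate, with square root $1/\theta $, so $A^g_{1/m}$ is an isomorphism from $M^2_{1/\theta }(\rd )$ onto $M^2_\theta (\rd )$. The two compositions $A^g_{1/m}A^g_m$ and $A^g_m A^g_{1/m}$ are then composites of two isomorphisms and hence isomorphisms of $M^2_\theta (\rd )$ and of $M^2_{1/\theta }(\rd )$ onto themselves, respectively. I expect no deep obstacle here: all the analytic content sits in the single estimate $a(f,f)\asymp \nm{f}{M^2_\theta }^{2}$, which is just \eqref{eq:18}. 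The only points needing care are bookkeeping ones, namely that the window hypothesis $g\in \mvv $ really does validate \eqref{eq:18} for the weight $\theta $ (it does, since $\theta \in \cM _v$), and that the identification of $M^2_{1/\theta }$ as the dual of $M^2_\theta $ is set up with the correct pairing so that $A^g_m$ is the form operator itself and not its adjoint.
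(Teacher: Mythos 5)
Your proposal is correct and rests on exactly the same analytic content as the paper's proof, namely the coercivity identity $\langle A_m^g f,f\rangle =\|\theta\, V_gf\|_2^2\asymp \|f\|_{M^2_\theta}^2$ of \eqref{eq:3} together with the duality $(M^2_\theta)'=M^2_{1/\theta}$; the paper simply records this equivalence and refers to \cite[Lemma~3.4]{GT10} for the details, which amount to the same boundedness--coercivity--duality argument you package as Lax--Milgram. Your bookkeeping points (that $\theta=m^{1/2}$ and $1/m$ are again $v$-moderate, so \eqref{eq:18} applies) are exactly the right ones and are handled correctly.
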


\par

Lemma \ref{l2} is based on the equivalence 
\begin{equation}
  \label{eq:3}
\langle A_m^g f ,f \rangle = \langle m, |V_g f|^2 \rangle  =
\|V_g f \cdot \theta \|_2^2 \asymp \|f\|_{M^2_\theta }^2 \, , 
\end{equation}
and is proved in detail in ~\cite[Lemma~3.4]{GT10}. 

\par

\subsection{The Symbol of $A_{1/m}^g A_m^g $}
The composition of localization operators is no longer a
localization operator, but the product of two localization operators still
has a well behaved Weyl symbol. In the following we use the \tf\
calculus of \psdo s as developed in~\cite{gro06,GR08}. Compared to the standard
\psdo\ calculus it is more restrictive because it is related to
the constant Euclidean geometry on phase space, on the other hand,  it is
more general because it works for arbitrary moderate weight functions
(excluding exponential growth). 

\par

Given a symbol $\sigma (x,\xi )$ on $\rr d\times \rr d\simeq \rdd $, the corresponding \psdo\ in the
Weyl calculus $\mathrm{Op}(\sigma )$ is defined formally as 
$$
\mathrm{Op}(\sigma )f(x) = \iint _{\rr {2d}} \sigma \Big (\frac{x+y}{2},\xi \Big ) e^{2\pi i
  (x-y)\cdot \xi } f(y)\, dy d\xi
$$
with a suitable interpretation of the integral. 
If $\cC $ is a class of symbols, we write $\mathrm{Op}(\cC ) = \{
\mathrm{Op}(\sigma ) : \sigma \in \cC \}$ for the class of all  \psdo
s with symbols in $\cC $. 
For the control of the symbol of composite operators we will use the
following characterization of the generalized Sj\"ostrand class from
~\cite{gro06}. For the formulation associate to a submultiplicative
weight $v(x,\xi )$ on $\rdd $ the  rotated weight on $\bR ^{4d}$
defined by 
\begin{equation}\label{vtilde}
\tilde v (x,\xi ,\eta ,y) = v(-y , \eta ).
\end{equation}
For radial weights, to which we will restrict
later, the distinction between $v$ and $\tilde v$ is unnecessary. 

\par

\begin{tm}\label{p3}
  Fix a non-zero $g\in \mvv $. An operator $T$ possesses a Weyl
  symbol in $\mif _{\tilde v}$, $T\in \mathrm{Op}(\mif _{\tilde v})$, \fif\ there exists
  a (semi-continuous) function $H\in L^1_v (\rdd )$ such that 
$$
|\langle T\pi (z)g, \pi (y)g\rangle | \leq  H(y-z) \qquad \text{ for
  all } y,z \in \rdd \, .
$$
\end{tm}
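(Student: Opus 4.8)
The plan is to reduce the statement to the very definition of the Sjöstrand class $\mif_{\tilde v}(\rdd)$ as a modulation space over phase space, by exploiting the standard identity that expresses the matrix coefficients of a Weyl operator as the \stft\ of its symbol. Writing $T=\Op(\sigma)$ for the (a priori distributional) Weyl symbol $\sigma$ and letting $\Phi=W(g,g)$ denote the Wigner distribution of the window $g$, there is a rotation $\jmath$ of $\rdd$, namely $\jmath(w_1,w_2)=(w_2,-w_1)$, such that
$$
\langle \Op(\sigma)\pi(z)g,\pi(y)g\rangle = c(y,z)\,V_\Phi\sigma\!\left(\tfrac{y+z}{2},\jmath(y-z)\right), \qquad |c(y,z)|=1 ,
$$
so that in modulus $|\langle T\pi(z)g,\pi(y)g\rangle|=\bigl|V_\Phi\sigma\bigl(\tfrac{y+z}{2},\jmath(y-z)\bigr)\bigr|$. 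First I would establish (or cite) this identity, together with the companion fact that $g\in\mvv(\rd)$ forces $\Phi=W(g,g)\in M^1_{\tilde v}(\rdd)$; the latter guarantees that $\Phi$ is an admissible window for $\mif_{\tilde v}(\rdd)$, so that by the norm equivalence \eqref{eq:18} applied on $\rdd$ the Sjöstrand norm may be computed with $\Phi$ in place of a Gaussian. This is precisely where the rotated weight $\tilde v$ of \eqref{vtilde} must enter.

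With the identity in hand, both implications become a single change of variables. The Sjöstrand norm is
$$
\|\sigma\|_{\mif_{\tilde v}} \asymp \int_{\rdd}\Big(\sup_{u\in\rdd}|V_\Phi\sigma(u,\zeta)|\Big)\,\tilde v(\zeta)\,d\zeta ,
$$
and since $\tilde v(\eta,y)=v(-y,\eta)=v(\jmath^{-1}(\eta,y))$ by \eqref{vtilde}, the substitution $\zeta=\jmath w$ (a rotation, hence Lebesgue-measure preserving) rewrites this integral as $\int_{\rdd}\bigl(\sup_u|V_\Phi\sigma(u,\jmath w)|\bigr)v(w)\,dw$. For the \emph{only if} direction I would therefore set $H(w)=\sup_{u}|V_\Phi\sigma(u,\jmath w)|$. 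This $H$ is lower semi-continuous, being a supremum of continuous functions; it dominates $|\langle T\pi(z)g,\pi(y)g\rangle|$ at $w=y-z$ by the identity above; and the displayed computation gives $\|H\|_{L^1_v}\asymp\|\sigma\|_{\mif_{\tilde v}}<\infty$.

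Conversely, if a semi-continuous $H\in L^1_v(\rdd)$ dominates the matrix coefficients, then the identity yields $|V_\Phi\sigma(\tfrac{y+z}{2},\jmath(y-z))|\le H(y-z)$; fixing the difference $w=y-z$ and letting the center $\tfrac{y+z}{2}$ range over all of $\rdd$ gives $\sup_u|V_\Phi\sigma(u,\jmath w)|\le H(w)$ for every $w$. Integrating against $v$ and undoing the rotation then produces $\|\sigma\|_{\mif_{\tilde v}}\asymp\int_{\rdd}H(w)v(w)\,dw=\|H\|_{L^1_v}<\infty$, that is, $T\in\Op(\mif_{\tilde v})$.

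I expect the main obstacle to be the precise verification of the magic formula, including its unimodular phase factor, and above all the bookkeeping of the rotation $\jmath$: one must check that the rotation built into the identity is exactly compensated by the definition \eqref{vtilde} of $\tilde v$, so that the weight $v$ on the difference variable of the kernel $H$ matches the weight $\tilde v$ on the frequency variable of the Sjöstrand norm. A secondary point requiring care is confirming that $\Phi=W(g,g)$ is a legitimate window, i.e.\ that $W(g,g)\in M^1_{\tilde v}(\rdd)$ whenever $g\in\mvv(\rd)$; this is the step where the hypothesis on $g$ is used and where the window-independence \eqref{eq:18} is invoked to make the choice of window immaterial.
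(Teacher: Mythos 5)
The paper does not prove Theorem~\ref{p3} at all: it is quoted verbatim from the reference \cite{gro06}, so there is no internal proof to compare against. Your proposal correctly reconstructs the argument of that reference --- the identity $\langle \Op(\sigma)\pi(z)g,\pi(y)g\rangle = c(y,z)\,V_{\Phi}\sigma\bigl(\tfrac{y+z}{2},\jmath(y-z)\bigr)$ with $\Phi=W(g,g)\in M^1_{\tilde v}(\rdd)$, followed by the change of variables matching $v$ on the difference variable with $\tilde v$ on the frequency variable --- and the bookkeeping you flag as the main risk does check out, since $\jmath^{-1}(\eta,y)=(-y,\eta)$ gives $v(\jmath^{-1}\zeta)=\tilde v(\zeta)$ exactly as in \eqref{vtilde}. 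The only point left implicit is the standard a priori assumption (made in \cite{gro06}) that $T$ is continuous between suitable test-function and distribution spaces, so that a distributional Weyl symbol exists before one estimates its \stft .
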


\par

\rem\ 
This theorem says the symbol class $\mif _{\tilde v}$ is \emph{characterized}
by the off-diagonal decay of its kernel with respect to \tfs s. This
kernel is in fact dominated by a convolution kernel. The composition of
operators can then studied with the help of convolution
relations. Clearly this is significantly easier than the standard
approaches that work with the Weyl symbol directly and the twisted
product between Weyl symbols. See~\cite{HoToWa} for results in this
direction. 

\par

\begin{tm}
  \label{t41}
Assume that $g\in M^1_{v^s} (\rd )$, $T\in \mathrm{Op}(\mif _{\tilde v^s})$ for $s\geq 1/2$, and $\theta 
\in \cM _{v^{1/2}}$. Then $A^g_\theta T A^g_{1/\theta } \in \mathrm{Op}(\mif
_{\tilde v^{s-1/2}})$.  
\end{tm}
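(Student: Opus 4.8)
The plan is to reduce everything to the kernel characterization of generalized Sj\"ostrand classes in Theorem~\ref{p3} and to control the off-diagonal decay of the composite $S := A^g_\theta T A^g_{1/\theta}$ by hand. Since $v\geq 1$, the window satisfies $g\in M^1_{v^s}\subseteq M^1_{v^{s-1/2}}$, so Theorem~\ref{p3} is applicable with both weights $v^s$ and $v^{s-1/2}$ relative to the \emph{same} window $g$. Thus it suffices to exhibit a function $H\in L^1_{v^{s-1/2}}(\rdd)$ with
$$
|\langle S\pi(z)g,\pi(y)g\rangle|\leq H(y-z)\qquad\text{for all }y,z\in\rdd,
$$
and the membership $S\in\mathrm{Op}(\mif_{\tilde v^{s-1/2}})$ follows.

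First I would unfold the two localization operators. As $\theta$ is a positive weight, $A^g_\theta$ is self-adjoint, so $\langle S\pi(z)g,\pi(y)g\rangle=\langle T A^g_{1/\theta}\pi(z)g,\,A^g_\theta\pi(y)g\rangle$. Expanding $A^g_{1/\theta}\pi(z)g$ and $A^g_\theta\pi(y)g$ via the defining integral~\eqref{eq:1}, writing $G:=V_g g$, and using the covariance of the \stft\ in the form $|V_g(\pi(z)g)(u)|=|G(u-z)|$, one arrives after taking moduli (and recalling $\theta>0$) at
$$
|\langle S\pi(z)g,\pi(y)g\rangle|\leq \iint_{\rdd\times\rdd}\frac{\theta(w)}{\theta(u)}\,|G(u-z)|\,|G(w-y)|\,|\langle T\pi(u)g,\pi(w)g\rangle|\,du\,dw.
$$
Applying the forward direction of Theorem~\ref{p3} to the hypothesis $T\in\mathrm{Op}(\mif_{\tilde v^s})$ supplies $H_T\in L^1_{v^s}(\rdd)$ with $|\langle T\pi(u)g,\pi(w)g\rangle|\leq H_T(w-u)$.

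The decisive step is the weight bookkeeping. Because $\theta\in\cM_{v^{1/2}}$, the bound~\eqref{eq:25} gives $\theta(w)/\theta(u)\leq v^{1/2}(w-u)$, so the kernel of $T$ is multiplied by $v^{1/2}$. Setting $\widetilde H:=v^{1/2}H_T$, one checks $\int \widetilde H\,v^{s-1/2}=\int H_T\,v^{s}<\infty$, i.e. $\widetilde H\in L^1_{v^{s-1/2}}$; this is precisely where one power of $v^{1/2}$ is lost, and where the hypothesis $s\geq 1/2$ enters, guaranteeing that $v^{s-1/2}$ is again submultiplicative. The remaining double integral is translation invariant and collapses into a triple convolution $H=|G|*\widetilde H*|G|^{\vee}$ evaluated at $y-z$, where $|G|^{\vee}(\zeta)=|G(-\zeta)|$. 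Since $|G|\in L^1_{v^s}\subseteq L^1_{v^{s-1/2}}$ by the norm equivalence~\eqref{eq:18} (with window $g$), since $v$ is even, and since $L^1_{v^{s-1/2}}(\rdd)$ is a convolution algebra, we conclude $H\in L^1_{v^{s-1/2}}$. Feeding this into the converse direction of Theorem~\ref{p3} yields $S\in\mathrm{Op}(\mif_{\tilde v^{s-1/2}})$.

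I expect the only genuine obstacle to lie in the rigorous justification of the formal manipulations, namely commuting $T$ with the vector-valued integral representations of the localization operators and verifying absolute convergence of all the pairings for the vectors $\pi(z)g$. This is a matter of the mapping properties recorded in Lemma~\ref{l:1} together with the weak definition~\eqref{eq:f1}, rather than a new difficulty. Everything else is a controlled bookkeeping of weights whose entire content is the single estimate $\theta(w)/\theta(u)\leq v^{1/2}(w-u)$, which accounts exactly for the passage from $v^s$ to $v^{s-1/2}$.
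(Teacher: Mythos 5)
Your proposal is correct and follows essentially the same route as the paper: expand the composite kernel, dominate the $T$-kernel by $H_T(w-u)$ with $H_T\in L^1_{v^s}$ via Theorem~\ref{p3}, absorb the moderateness estimate $\theta(w)/\theta(u)\leq v(w-u)^{1/2}$ into that kernel, collapse to the triple convolution $|G|\ast(v^{1/2}H_T)\ast|G|^{\vee}\in L^1_{v^{s-1/2}}$, and invoke Theorem~\ref{p3} again. The only cosmetic difference is that the paper tests the kernel against time-frequency shifts of the Gaussian $h$ (so $G=V_h g$) while you test against $\pi(z)g$ itself (so $G=V_g g$); both are legitimate since Theorem~\ref{p3} allows any window in $M^1_{v^{s-1/2}}$ and $g\in M^1_{v^s}\subseteq M^1_{v^{s-1/2}}$.
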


\begin{proof}
We distinguish the window $g$ of the localization operator $A_m^g$
from the  window ${h} $ used in the expression of the  kernel $\langle
T\pi (z){h} , \pi (y){h} \rangle$. Choose ${h} $ to be the Gaussian,
then ${h} \in \mvv $ for every submultiplicative weight $v$. 
Let us first write the kernel $\langle
T\pi (z){h} , \pi (y){h} \rangle$ informally and justify the
convergence of the integrals later. 
Recall that
$$
T(A_{1/\theta}^g f )  = T\Big( \intrdd \theta (u)\inv
\langle f, \pi (u)g\rangle \pi (u)g \, du  \Big) 
$$
Then 
\begin{multline}\label{eq:21}
\lefteqn{\langle A_\theta ^g T A^g_{1/\theta} \pi (z){h} , \pi (y){h}
  \rangle 
= \langle    T A^g_{1/\theta} \pi (z){h} , A_\theta ^g\pi (y){h}
\rangle  }
\\[1ex]
=\iint _{\rr {4d}} \frac{1}{\theta (u)} \big\langle \pi (z) {h} , \pi
(u)g\big\rangle \, \big\langle T\pi (u)g, \pi (u')g\big\rangle \, \theta (u')
\big\langle \pi (y){h} , \pi (u')g \big\rangle \, dudu'  \,.
  \end{multline}


\par

Now set 
$$
G(z) = |\langle g, \pi (z){h} \rangle | = |V_{{h} } g(z)| \,\,
\text{ and } \,\, G^*(z) = G(-z)
$$
and let $H$ be a dominating function in $L^1_v(\rdd )$, so that
$|\langle T\pi (u)g, \pi (u')g\rangle | \leq H(u'-u)$. 
Since \tfs s commute up to a phase factor, we have 
$$
|\langle \pi (z) {h} , \pi
(u)g\rangle | = G(z-u) \, .
$$
 Before substituting all estimates into \eqref{eq:21}, we recall that
 $\theta $ is $\sqrt{v}$-moderate by assumption and so \eqref{eq:25}
 says that 
$$
\frac{\theta (u')}{\theta (u)} \leq v(u'-u)^{1/2}  \qquad \text{ for all }
u,u'\in \rdd \, .
$$
Now by \eqref{eq:21} we get
\begin{eqnarray}
 \lefteqn{ |\langle A_\theta ^g T A^g_{1/\theta} \pi (z){h} , \pi
   (y){h} \rangle |} \notag
   \\
  &\leq & \intrdd \intrdd \frac{\theta (u')}{\theta (u)} G(z-u)
  H(u'-u) G(y-u') \, dudu' \notag
  \\
&\leq & \intrdd \intrdd  G(z-u) v(u'-u)^{1/2}  H(u'-u) G(y-u') \,
dudu' \notag \\
&=&
\Big( G \ast (v^{1/2}H) \ast G^* \Big)(z-y) \, . \notag
\end{eqnarray}
Thus the kernel of $A_\theta ^g T A^g_{1/\theta}$ is dominated by the
function $G\ast (v^{1/2}H) \ast G^* $. 
By assumption $g\in M^1_{v^s} (\rd )$ and thus $G\in L^1_{v^s}(\rdd
)$,  and $T\in
\mathrm{Op}(\mif _{\tilde v^{s}})$ and thus $H\in L^1_{v^s}(\rdd )$. Then
$v^{1/2}H \in L^1_{v^{s-1/2}}(\rdd )$. 
Consequently
\begin{equation}
  \label{eq:cnv}
G \ast (v^{1/2}H) \ast G^* \in L^1_{v^s} \ast L^1_{v^{s-1/2}}
\ast L^1_{v^s} \subseteq L^1_{v^{s-1/2}}\, .  
\end{equation}

\par

The characterization of  Theorem~\ref{p3}  now implies that $A_\theta ^g
T A^g_{1/\theta} \in \mathrm{Op}(\mif _{\tilde v^{s-1/2}})$. 
\end{proof}

\par

\begin{cor}\label{p4}
Assume that $g \in M^1_{v^2 w}(\rd ) $ and $m\in \cM _v$ and $w$ is an
arbitrary submultiplicative weight. Then 
  $A_{1/m}^g A_m^g \in \mathrm{Op}(\mif _{\tilde {v} \tilde  w}))$. 
\end{cor}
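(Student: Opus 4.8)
The plan is to follow the proof of Theorem~\ref{t41} almost verbatim, specialized to $T=\Id$ and $\theta=1/m$, and then to invoke Theorem~\ref{p3}. Applying that characterization with the submultiplicative weight $vw$ in place of $v$ (note $\widetilde{vw}=\tilde v\,\tilde w$), and using the Gaussian $h$ as the test window, which is legitimate since $h\in M^1_{vw}$, it suffices to produce a single function $H\in L^1_{vw}(\rdd)$ with
$$
\bigl|\langle A^g_{1/m}A^g_m\,\pi(z)h,\pi(y)h\rangle\bigr|\le H(y-z)\qquad\text{for all }y,z\in\rdd .
$$
Thus the whole problem reduces to one kernel estimate for the composite operator.

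First I would expand both localization operators via their defining integral~\eqref{eq:1}: writing $A^g_m\pi(z)h=\int m(u)\langle\pi(z)h,\pi(u)g\rangle\,\pi(u)g\,du$ and then applying $A^g_{1/m}$ turns the kernel into
$$
\langle A^g_{1/m}A^g_m\,\pi(z)h,\pi(y)h\rangle
=\iint_{\rr{4d}}\frac{m(u)}{m(u')}\,\langle\pi(z)h,\pi(u)g\rangle\,\langle\pi(u)g,\pi(u')g\rangle\,\langle\pi(u')g,\pi(y)h\rangle\,du\,du'.
$$
Passing to absolute values, the outer factors become $G(z-u)$ and $G(y-u')$ with $G=|V_hg|$, while the middle factor is $|V_gg(u-u')|=:K(u-u')$, which is exactly the dominating function of $T=\Id$. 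The one crucial input is the moderateness bound~\eqref{eq:25}: since $m\in\cM_v$ we have $m(u)/m(u')\le v(u-u')$. This is the heart of the matter. A single localization operator has a kernel growing like $m$ and controlled by no fixed weight; in the composition, however, the factors $m$ and $1/m$ interact only through the ratio $m(u)/m(u')$, which the moderateness of $m$ collapses to the fixed weight $v$. Substituting, the kernel is dominated by $(G\ast(vK)\ast G^*)(z-y)$, with $G^*(t)=G(-t)$, exactly the convolution structure of~\eqref{eq:cnv} but carrying a full power of $v$ in place of $v^{1/2}$; the extra half-power is lost precisely because $m$ is only $v$-moderate, not $v^{1/2}$-moderate.

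It remains to check that $G\ast(vK)\ast G^*\in L^1_{vw}$, and here the hypothesis $g\in M^1_{v^2w}$ enters. By the norm equivalence~\eqref{eq:18}, $g\in M^1_{v^2w}$ gives $K=|V_gg|\in L^1_{v^2w}$, whence $vK\in L^1_{vw}$; and $G=|V_hg|\in L^1_{v^2w}\subseteq L^1_{vw}$ because the Gaussian $h$ belongs to every weighted $M^1$-space. Since $vw$ is submultiplicative, $L^1_{vw}$ is a convolution algebra (equivalently, $vw$ is $v^2w$-moderate, so $L^1_{v^2w}\ast L^1_{vw}\subseteq L^1_{vw}$), and therefore $G\ast(vK)\ast G^*\in L^1_{vw}$. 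Taking $H$ to be this convolution and invoking Theorem~\ref{p3} yields $A^g_{1/m}A^g_m\in\mathrm{Op}(\mif_{\tilde v\tilde w})$. The only point requiring care beyond this bookkeeping is the absolute convergence of the formal double integral, but this is automatic once the integrand is dominated by the integrable expression above; so the genuine content is the cancellation furnished by~\eqref{eq:25}, everything else being the same convolution analysis as in Theorem~\ref{t41}.
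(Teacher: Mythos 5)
Your proof is correct and follows essentially the same route as the paper: the paper's own proof of Corollary~\ref{p4} simply says to take $T=\mathrm{Id}$ in Theorem~\ref{t41}, replace $\theta\in\cM_{v^{1/2}}$ by $m\in\cM_v$, and modify the convolution inequality~\eqref{eq:cnv} accordingly, which is precisely the kernel estimate $G\ast(vK)\ast G^*\in L^1_{v^2w}\ast L^1_{vw}\ast L^1_{v^2w}\subseteq L^1_{vw}$ that you carry out explicitly. Your identification of~\eqref{eq:25} as the point where the composition $A^g_{1/m}A^g_m$ gains a fixed dominating weight is exactly the intended mechanism.
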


\par

\begin{proof}
In this case $T$ is the identity operator and $\mathrm{Id} \in
\mathrm{Op}(\mif _{v_0})$ for every submultiplicative weight
$v_0(x,\xi ,\eta ,y)=v_0 (\eta ,y)$. In
particular $\mathrm{Id} \in
\mathrm{Op}(\mif _{\widetilde{v}^2})$. Now  replace the weight $\theta $ in
Theorem~\ref{t41}  by $m$
and  the condition $\theta  \in \cM _{v^{1/2}} $ by $m \in \cM _{v}$ and
modify the convolution inequality~\eqref{eq:cnv} in the proof of  Theorem~\ref{t41}. 
\end{proof}


\par

\section{Canonical  Isomorphisms
between Modulation Spaces of Hilbert-Type}

\par

In~\cite{GT10} we have used a deep result of Bony and Chemin~\cite{BC94} about
the existence of isomorphisms between \modsp s of Hilbert type and
then extended those isomorphisms to arbitrary \modsp s. Unfortunately
the result of  Bony and Chemin is restricted to weights of polynomial
type and does not cover weights moderated by superfast growing functions, such as $v(z)
= e^{a|z|^b}$ for $0< b <1$. 

\par

In this section we construct explicit isomorphisms between $\lrd $ and
the  \modsp s $M^2_\theta (\rd )$ for a  general class of
weights. We will assume that the weights are radial in each
\tf\ variable. Precisely, consider \tf\ variables
\begin{equation}\label{R2nCnident}
(x,\xi ) \simeq z= x+i\xi \in \cd \simeq  \rdd ,
\end{equation}
which we identify by
$$
(x_1, \xi _1; x_2, \xi _2; \dots ; x_d, \xi _d) = (z_1, z_2, \dots
, z_d)          \in \cd \simeq  \rdd .
$$
Then the weight function $m$ should satisfy
\begin{equation}\label{mradial}
m(z) = m_0(|z_1|, \dots , |z_d|) \qquad \text{ for } z\in \rdd \, 
\end{equation}
for some function $m_0 $ on $\overline {\bR} _+^d=[0,\infty )^d$. 
 Without loss of generality, we may also  assume that $m$ is
continuous on $\rdd $. (Recall that only weights of polynomial type
occur in the lifting results in \cite{GT10}. On the other hand,  no
radial symmetry   is needed in \cite{GT10}.)

\par

For each multi-index $\alpha =(\alpha _1, \dots ,\alpha _d)
\in \bN _0^d$ we denote the corresponding multivariate Hermite function by 
$$
h_\alpha (t) = \prod _{j=1} ^d h_{\alpha _j} (t_j),\quad \text{where}\quad h_n (x) =
\frac{2^{1/4}\pi ^{n/2}}{n!^{1/2}} e^{\pi x^2} \frac{d^n}{dx^n} (e^{-2\pi x^2})
$$
is the $n$-th Hermite
function in one variable with the normalization $\|h_n\|_{2}=1$. 
%
%
%
%
%
%
%
%
%
%
%

Then the collection of all Hermite functions $h_\alpha , \alpha \geq
0$, is an \onb\ of $\lrd $. By identifying $\rdd $ with $\cd $ via \eqref{R2nCnident},
the \stft\ of $h_\alpha $ with respect to ${h}
(t) = 2^{d/4} e^{-\pi t^2} $ is simply
\begin{equation}
  \label{eq:can1}
  V_{h} h_\alpha (\overline{z}) =  e^{-\pi i x\cdot \xi } \Big(\frac{\pi ^{|\alpha|}}{\alpha
      !}\Big)^{1/2} z^\alpha \, e^{-\pi |z|^2/2} = e^{-\pi i x\cdot
      \xi } \, e_\alpha (z) \,
    e^{-\pi |z|^2}  \qquad \text{ for }
    z\in \cd \, . 
\end{equation}

\par

\rem\ We mention that  a  formal   Hermite expansion $f= \sum _\alpha
c_\alpha h_\alpha $ defines a distribution in the Gelfand-Shilov space
$(S^{1/2}_{1/2})'$, \fif\  the  coefficients satisfy  $|c_\alpha | = \cO (e^{\epsilon
  |\alpha|})$ for every $\epsilon >0$. The Hermite expansion then  converges in the
weak$^*$ topology. Here we have used the fact that the duality $(S^{1/2}_{1/2})' \times S^{1/2}_{1/2}$ extends the $L^2$-form $\langle \cdot , \cdot \rangle$ on $S^{1/2}_{1/2}$, and likewise the duality of the \modsp s
$M^2_\theta (\rd )\times M^2_{1/\theta}(\rd )$.   Consequently, the  coefficient
$c_\alpha $ of a Hermite expansion  is uniquely determined by
$c_\alpha = \langle f, h_\alpha \rangle$ for $f\in (S^{1/2}_{1/2})'$.
See~\cite{janssen82} for details. 

 In the following we take the existence and convergence of    Hermite
 expansions for functions and distributions in arbitrary \modsp s for granted. 
By  $d\mu (z) = e^{-\pi |z|^2} \, dz $  we denote  the Gaussian measure on $\cd $.
\par

\begin{lemma}\label{l6}
Assume that  $\theta(z)  = \cO (e^{a|z|})$ and that  $\theta$ is radial in
each coordinate. 
\begin{itemize}
\item[\rm{(a)}]    Then the monomials $z^\alpha ,\alpha \geq 0$, are orthogonal in
  $L^2_\theta (\cd , \mu )$. 

\item[\rm{(b)}]  The finite linear combinations of the  Hermite functions are dense in $M^2_\theta (\rd )$.
\end{itemize}

\end{lemma}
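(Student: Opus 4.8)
The plan is to prove both parts using the explicit formula \eqref{eq:can1} for the \stft\ of a Hermite function and the radial symmetry of $\theta$.

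For part (a), I would start from the fact that the \stft\ realizes a unitary (up to the chirp factor $e^{-\pi i x\cdot\xi}$) correspondence between $\lrd$ and a space of entire functions, and more precisely that $\langle V_{h}h_\alpha, V_{h}h_\beta\rangle$ against the weight $\theta^2 \, d\mu$ reduces to an integral of $z^\alpha \overline{z^\beta}$ against $\theta(z)^2 e^{-\pi|z|^2}\,dz$. Using the identification \eqref{R2nCnident} and the factorization $z^\alpha = \prod_j z_j^{\alpha_j}$, this integral factors into a product of $d$ one-dimensional integrals over $\mathbb{C}$. The key computation is then purely one-variable: writing each $z_j$ in polar coordinates $z_j = r_j e^{i\vartheta_j}$, the angular integral $\int_0^{2\pi} e^{i(\alpha_j-\beta_j)\vartheta_j}\,d\vartheta_j$ vanishes unless $\alpha_j = \beta_j$, precisely because $\theta$ is radial in each coordinate and therefore depends only on the $r_j$. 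Hence orthogonality of the $z^\alpha$ in $L^2_\theta(\cd,\mu)$ follows, and by the growth bound $\theta(z)=\cO(e^{a|z|})$ the relevant integrals converge absolutely (the Gaussian factor $e^{-\pi|z|^2}$ dominates the at-most-exponential weight), so everything is well-defined.

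For part (b), I would transfer density from the Fock-space side back to $M^2_\theta(\rd)$. Since $V_{h}$ is (up to the unimodular chirp) an isometric isomorphism of $M^2_\theta(\rd)$ onto the weighted space of entire functions with norm $\|F\|^2 = \int_{\cd}|F(z)|^2\theta(z)^2\,d\mu(z)$, it suffices to show that the monomials $z^\alpha$ have dense span in that weighted Fock space. The standard argument is orthogonality: if an entire function $F$ in the space is orthogonal to every $z^\alpha$ with respect to $\theta^2\,d\mu$, I would expand $F$ in its Taylor series $F=\sum_\alpha b_\alpha z^\alpha$ and use part (a) together with the radial symmetry to conclude that each Taylor coefficient $b_\alpha$ must vanish, whence $F\equiv 0$. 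Because the span of the $z^\alpha$ corresponds under $V_{h}$ exactly to the span of the Hermite functions $h_\alpha$ (again via \eqref{eq:can1}), this yields density of the finite linear combinations of Hermite functions in $M^2_\theta(\rd)$.

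The main obstacle, and the step requiring genuine care, is justifying that orthogonality to all monomials actually forces an entire $F$ in the weighted space to vanish — i.e., that the monomials are not merely an orthogonal system but a \emph{complete} one. The subtlety is that for weights $\theta$ of ultra-rapid growth the weighted Fock space could in principle be strictly larger than the closed span of the monomials, so completeness is not automatic and is exactly where the at-most-exponential growth hypothesis $\theta(z)=\cO(e^{a|z|})$ must be used. I would handle this by showing that the Taylor series of any $F$ in the space converges to $F$ in the weighted norm: the radial symmetry lets me estimate the norm of the tail $\sum_{|\alpha|>N} b_\alpha z^\alpha$ coordinatewise, and the Gaussian factor guarantees that the weighted $L^2$ norm of $z^\alpha$ is comparable to $\bigl(\tfrac{\alpha!}{\pi^{|\alpha|}}\bigr)^{1/2}$ times a bounded correction coming from $\theta$ — this is where the gamma-function estimates announced in the introduction ultimately enter. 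Establishing this norm convergence of Taylor series is the technical heart; once it is in hand, both completeness and hence density follow directly.
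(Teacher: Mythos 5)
Your part (a) is exactly the paper's argument: factor into one--dimensional polar integrals and observe that the angular integral $\int_0^{2\pi}e^{i(\alpha_j-\beta_j)\vartheta_j}\,d\vartheta_j$ vanishes unless $\alpha_j=\beta_j$, with the Gaussian absorbing the at-most-exponential weight. Nothing to add there.

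For part (b) you take a genuinely different route. The paper's proof is a two-line duality argument: a nonzero $f\in (M^2_\theta)'=M^2_{1/\theta}(\rd)$ annihilating all $h_\alpha$ would have all Hermite coefficients $\langle f,h_\alpha\rangle$ equal to zero, hence $f=0$ in $(S^{1/2}_{1/2})'$ by the uniqueness of Hermite expansions there --- a fact the paper explicitly takes for granted in the remark preceding the lemma. You instead transfer to the weighted Fock space and aim to prove completeness of the monomials directly; this avoids the black box about Hermite expansions in $(S^{1/2}_{1/2})'$ but forces you to confront the completeness question head-on, which you correctly identify as the crux. Two comments on your plan for closing that gap. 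First, the claim that $\|z^\alpha\|_{L^2_\theta(\cd,\mu)}$ equals $(\alpha!/\pi^{|\alpha|})^{1/2}$ times a \emph{bounded} correction is false in general: the correction is $\tau_\alpha(\theta^2)^{1/2}$, which for $\theta(z)=e^{a|z|^b}$ grows without bound; the gamma-function estimates of Proposition~\ref{p11b} control products like $\tau_\alpha(\theta^2)\tau_\alpha(\theta^{-2})$, not $\tau_\alpha(\theta^2)$ itself. Second, you do not actually need norm convergence of the Taylor series as an input. The cleaner closure is the circle-average argument: if $F$ is entire, lies in the weighted space, and is orthogonal to every $z^\alpha$, then Fubini (justified by Cauchy--Schwarz against $\|z^\alpha\|_{L^2_\theta(\mu)}<\infty$) and the identity $\int_0^{2\pi}F(re^{i\vartheta})e^{-in\vartheta}\,d\vartheta=2\pi b_n r^n$ give $\langle F,z^\alpha\rangle = b_\alpha\,\|z^\alpha\|^2_{L^2_\theta(\mu)}$, so all Taylor coefficients $b_\alpha$ vanish and $F\equiv 0$. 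With that step made explicit your argument is complete and self-contained; as written, the ``technical heart'' is announced but not carried out, and the one quantitative claim you do make about it is inaccurate.
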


\par

By using polar coordinates $z_j=r_je^{i\fy _j}$, where $r_j\ge 0$ and $\fy _j\in [0,2\pi )$, we get
\begin{equation}\label{conseqpolar}
z^\alpha =r^\alpha e^{i \alpha \cdot  \fy}\quad \text{and}\quad dz = r_1\cdots r_d\, d\fy dr
\end{equation}
and the condition on $\theta$ in Lemma \ref{l6} can be recast as 
\begin{equation}\label{conseqpolar2}
\theta (z)=\theta _0(r),
\end{equation}
for some appropriate function $\theta _0$ on $[0,\infty )^d$,  and
$r=(r_1,\dots ,r_d)$ and $\fy =(\fy _1,\dots ,\fy _d)$ as usual. 

\par

\begin{proof}
(a)   This is  well-known and is proved  in
\cite{daub88,folland89}. In order to be self-contained,  we  recall
the arguments. 
By writing the integral over $\rdd $ in polar coordinates in
each \tf\ 
pair, \eqref{conseqpolar} and \eqref{conseqpolar2} give 
\begin{equation*}
\intrdd z^\alpha \overline{z^\beta} \theta (z) ^2 \, e^{-\pi |z|^2} \, dz
=  \iint _{\rd _+ \times [0,2\pi   )^d}  e^{i  (\alpha  - 
  \beta ) \cdot  \vf } r^{\alpha +\beta}  e^{-\pi r^2} \,
\theta _0(r)^2 r_1 \cdots r_d\, d\vf  dr \, .
\end{equation*}
The integral over the angles $\vf _j$ is zero, unless $\alpha = \beta
$, whence the orthogonality of the monomials.  

\par

(b) Density: Assume on the contrary that the closed subspace in
$M^2_\theta (\rd )$ spanned by the Hermite functions is a proper subspace of
$M^2_\theta (\rd )$. Then there exists a non-zero  $ f\in (M^2_\theta
(\rd ))' = M^2_{1/\theta} (\rd )$, such that 
$\langle f , h_\alpha   \rangle = 0$ for all Hermite functions
$h_\alpha \in M^2_{\theta }(\rd ) $,  
$\alpha \in \bN _0^d$. Consequently the Hermite expansion of $f =
\sum _\alpha \langle f, h_\alpha \rangle h_\alpha =0$ in
$(S^{1/2}_{1/2})'$, which contradicts the assumption that $f\neq 0$.
\end{proof}
 
 \par
 
\begin{definition}
  The canonical localization  operator $J_m$  is the localization operator $A^{h}
  _m$  associated to the weight $m$ and  to the Gaussian window
  ${h} = h_0$.
  Specifically,
  \begin{equation}
    \label{eq:can2}
    J_m f = \intrdd m (z) \langle f, \pi (z){h} \rangle \pi
    (z) {h} \, dz \, .
  \end{equation}
\end{definition}
 For $m=\theta ^2$ we obtain
\begin{equation}
  \label{eq:can4}
  \langle J_mf,f\rangle = \langle m V_{h} f , V_{h}
  f\rangle _{\rdd } = \| V_{h} f \, \theta \|_2^2 = \|f\|_{M^2_\theta
  }^2 \, 
\end{equation}
whenever $f$ is in a suitable space of test functions. 

\par

Our main insight is that localization  operators with respect to Gaussian
windows and radial symbols have rather special properties. In view of
the connection to the localization  operators on the Bargmann-Fock space
(see below) this is to be expected. 


\begin{tm}\label{t7}
  If $\theta $ is a continuous,  moderate function and radial in each \tf\ coordinate,
  then each of the mappings
\begin{alignat*}{4}
J_{\theta} \, &: &\, M^2_\theta (\rd )&\to L^2(\rd ),&\quad J_{\theta} \, &: &\, L^2(\rd ) &\to M^2_{1/\theta} (\rd )
\\[1ex]
J_{1/\theta} \, &: &\, M^2_{1/\theta} (\rd )&\to L^2(\rd ),&\quad J_{1/\theta} \, &: &\, L^2(\rd ) &\to M^2_{\theta} (\rd )
\end{alignat*}
is an  isomorphism.
\end{tm}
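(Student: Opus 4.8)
The plan is to diagonalize the canonical operators $J_\theta$ and $J_{1/\theta}$ in the Hermite basis, to read off the four mapping statements as estimates on the eigenvalue sequences, and then to reduce everything to the single weighted gamma inequality announced in the introduction by means of a composition argument. First I would compute the action of $J_m$ on the Hermite basis. Writing $f=\sum_\alpha c_\alpha h_\alpha$ and using the weak definition \eqref{eq:f1} together with the explicit formula \eqref{eq:can1} for $V_h h_\alpha$, the matrix entry $\langle J_m h_\alpha,h_\beta\rangle=\langle m\,V_h h_\alpha,\,V_h h_\beta\rangle_{\rdd}$ becomes, in polar coordinates in each \tf\ pair, an integral carrying the angular factor $e^{i(\alpha-\beta)\cdot\varphi}$. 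Since $m$ is radial, the computation in Lemma~\ref{l6}(a) forces this to vanish unless $\alpha=\beta$, so $J_m$ is diagonal, $J_m h_\alpha=\lambda_\alpha(m)h_\alpha$, with
\[
\lambda_\alpha(m)=\frac{\pi^{|\alpha|}}{\alpha!}\intrdd m(z)\,|z^\alpha|^2\,e^{-\pi|z|^2}\,dz .
\]
The substitution $x_j=\pi r_j^2$ turns each factor into the weighted gamma integral of the introduction; in particular $\lambda_\alpha(1)=1$, so $\lambda_\alpha(m)$ is the average of the radial profile of $m$ against an underlying probability measure $d\nu_\alpha$.

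Next I would record the two relevant norms on the Hermite side. Since the $h_\alpha$ form an \onb\ of $\lrd$ one has $\|f\|_{L^2}^2=\sum_\alpha|c_\alpha|^2$, while \eqref{eq:can4} applied with $m=\theta^2$ together with diagonalization gives $\|f\|_{M^2_\theta}^2=\langle J_{\theta^2}f,f\rangle=\sum_\alpha\lambda_\alpha(\theta^2)\,|c_\alpha|^2$. By the density statement in Lemma~\ref{l6}(b), the Hermite coefficient map identifies $L^2$ and $M^2_\theta$ isometrically with the unweighted and the $\lambda_\bullet(\theta^2)$-weighted $\ell^2$-sequence spaces, respectively. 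On the sequence side $J_\theta$ and $J_{1/\theta}$ are just the diagonal multipliers $(\lambda_\alpha(\theta))$ and $(\lambda_\alpha(1/\theta))$, and since both are diagonal in the same basis, their compositions $J_\theta J_{1/\theta}$ and $J_{1/\theta}J_\theta$ coincide with the single multiplier $(\lambda_\alpha(\theta)\lambda_\alpha(1/\theta))$. The weighted gamma inequality says precisely that
\[
C^{-1}\le\lambda_\alpha(\theta)\,\lambda_\alpha(1/\theta)\le C\qquad\text{for all }\alpha,
\]
and a diagonal multiplier with entries bounded above and below is an isomorphism on \emph{any} weighted $\ell^2$-space. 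Hence $J_\theta J_{1/\theta}$ is an isomorphism of $L^2$ and $J_{1/\theta}J_\theta$ is an isomorphism of $M^2_\theta$.

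It then remains to pass from the compositions to the individual operators. Lemma~\ref{l:1}, applied with the Gaussian window $h\in M^1_v$ (valid for every submultiplicative $v$), gives the boundedness of $B:=J_{1/\theta}\colon L^2\to M^2_\theta$ and of $A:=J_\theta\colon M^2_\theta\to L^2$. We have just shown that $AB$ is an isomorphism of $L^2$ and $BA$ an isomorphism of $M^2_\theta$. Consequently $A$ admits the right inverse $B(AB)^{-1}$ and the left inverse $(BA)^{-1}B$, so $A$ is a bounded bijection and hence, by the open mapping theorem, an isomorphism; the symmetric bookkeeping shows that $B$ is an isomorphism as well. This establishes that $J_\theta\colon M^2_\theta\to L^2$ and $J_{1/\theta}\colon L^2\to M^2_\theta$ are isomorphisms. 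Finally, replacing $\theta$ by the equally admissible radial weight $1/\theta$ and rerunning the same argument yields the remaining pair $J_{1/\theta}\colon M^2_{1/\theta}\to L^2$ and $J_\theta\colon L^2\to M^2_{1/\theta}$, which completes the four cases.

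The hard part is the uniform gamma inequality $\lambda_\alpha(\theta)\lambda_\alpha(1/\theta)\le C$: the lower bound $\ge 1$ is Cauchy--Schwarz, but the upper bound holding simultaneously for all multi-indices $\alpha$ is exactly the delicate estimate on weighted gamma functions announced in the introduction, and this is where essentially all the work lies. I would stress that the composition argument is what lets us get by with this product estimate alone: a \emph{direct} comparison of $\|J_\theta f\|_{L^2}$ with $\|f\|_{M^2_\theta}$ would amount to the reverse-Jensen bound $\lambda_\alpha(\theta^2)\le C\lambda_\alpha(\theta)^2$, which is \emph{not} a formal consequence of the product bound for general measures; routing through $AB$ and $BA$ delivers this reverse inequality as a byproduct rather than requiring it as an input.
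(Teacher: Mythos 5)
Your proposal is correct and follows essentially the same route as the paper: diagonalize $J_{\theta^s}$ in the Hermite basis (Proposition~\ref{p9}), identify $\|f\|_{M^2_\theta}^2=\sum_\alpha|c_\alpha|^2\tau_\alpha(\theta^2)$ (Proposition~\ref{p8}), prove the two-sided bound $C^{-1}\le\tau_\alpha(\theta)\tau_\alpha(1/\theta)\le C$ (the case $s=1$, $t=-1$ of Proposition~\ref{p11b}), and deduce the four isomorphisms from the invertibility of the compositions $J_\theta J_{1/\theta}$ and $J_{1/\theta}J_\theta$ exactly as in the paper's factorization argument. The only cosmetic difference is in which half of the gamma inequality is called hard: you get the lower bound from Cauchy--Schwarz and view the upper bound as delicate, whereas the paper obtains the upper bound cheaply from the boundedness of localization operators (Lemma~\ref{l:1}) and spends its effort on the lower bound, which it needs for general exponents $s,t$.
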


\par

The proof is non-trivial and requires a number of preliminary
results. 
In these investigations we will play with different coefficients of the form
$$
\tau _\alpha (\theta ) := \scal {J_\theta h_\alpha }{h_\alpha },
$$
or, more generally,
\begin{equation}\label{eq:can17}
\tau _{\alpha ,s}(\theta ) := \tau _{\alpha}(\theta ^s) = \scal {J_{\theta ^s}h_\alpha }{h_\alpha } =\intrdd \theta (z)^s \frac{\pi ^{|\alpha|}}{\alpha !} |z^\alpha |^2
  e^{-\pi |z|^2}\, dz \, ,
\end{equation}
when $\theta$ is a weight function and $s\in \mathbb R$. We note that
the $\tau _{\alpha, s}(\theta)$ are strictly positive, since $\theta
$ is positive. If $\theta \equiv 1$, then $\tau _{\alpha ,s} (\theta ) = 1$, so
we may consider the coefficients $\tau _{\alpha ,s}(\theta )$ 
as \emph{weighted gamma functions}. 

\par

\begin{prop}[Characterization of $M^2_\theta$ with Hermite functions]
\label{p8}
  Let $\theta $ be a moderate and radial  function.
Then
\begin{equation}
  \label{eq:can6}
  \|f\|_{M^2_\theta }^2 = \sum _{\alpha \geq 0} |\langle f, h_\alpha
  \rangle | ^2 \tau _\alpha (\theta ^2) \, .
\end{equation}
\end{prop}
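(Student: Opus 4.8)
The plan is to read the $M^2_\theta$-norm as a weighted $L^2$-norm of the \stft\ and to diagonalize it against the Hermite basis, the orthogonality being supplied by Lemma~\ref{l6}(a). Recall from \eqref{eq:can16} that $\|f\|_{M^2_\theta}^2 = \intrdd |V_{h} f(z)|^2\, \theta(z)^2\, dz$, so that $M^2_\theta$ is, up to the identification $\rdd \simeq \cd$, isometrically the image under $V_{h}$ of the weighted Hilbert space $L^2(\rdd, \theta^2\, dz)$. Thus \eqref{eq:can6} is nothing but the Parseval identity for $f$ expanded in the Hermite functions, once one checks that the family $\{V_{h} h_\alpha\}$ is orthogonal in this weighted space.

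First I would compute, from \eqref{eq:can1}, the products $V_{h} h_\alpha\,\overline{V_{h} h_\beta}$. The essential observation is that each $V_{h} h_\alpha$ carries the unimodular phase $e^{-\pi i x\cdot \xi }$, and this phase cancels in every such product, leaving
$$
V_{h} h_\alpha(z)\,\overline{V_{h} h_\beta(z)} = e_\alpha(z)\,\overline{e_\beta(z)}\, e^{-\pi|z|^2} = \frac{\pi^{(|\alpha|+|\beta|)/2}}{\sqrt{\alpha!\,\beta!}}\, z^\alpha\,\overline{z^\beta}\, e^{-\pi|z|^2}.
$$
Integrating against $\theta^2$ and passing to the Gaussian measure $\mu$ gives $\intrdd V_{h} h_\alpha\,\overline{V_{h} h_\beta}\,\theta^2\, dz = \tfrac{\pi^{(|\alpha|+|\beta|)/2}}{\sqrt{\alpha!\,\beta!}}\int_{\cd} z^\alpha\,\overline{z^\beta}\,\theta^2\, d\mu(z)$, and by Lemma~\ref{l6}(a) the monomials $z^\alpha$ are orthogonal in $L^2_\theta(\cd,\mu)$. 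Hence the off-diagonal terms vanish, while the diagonal term equals $\|V_{h} h_\alpha\|_{L^2(\theta^2 dz)}^2 = \tau_\alpha(\theta^2)$ by the definition \eqref{eq:can17} with $s=2$. In other words, $\{h_\alpha\}$ is an orthogonal system in $M^2_\theta$ with $\|h_\alpha\|_{M^2_\theta}^2 = \tau_\alpha(\theta^2)$; I would stress that the radial symmetry of $\theta$ enters only through Lemma~\ref{l6}(a) and is precisely what renders the phase factor irrelevant.

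With orthogonality in hand, \eqref{eq:can6} is immediate for any finite linear combination $f = \sum_{\alpha} c_\alpha h_\alpha$: expanding $|V_{h} f|^2$ and integrating against $\theta^2$, the cross terms drop out and $\|f\|_{M^2_\theta}^2 = \sum_\alpha |c_\alpha|^2\, \tau_\alpha(\theta^2)$, with $c_\alpha = \langle f, h_\alpha\rangle$. The remaining work is to pass from finite sums to arbitrary $f\in M^2_\theta$. Here I would invoke the density of the finite linear combinations of Hermite functions in $M^2_\theta$, which is exactly Lemma~\ref{l6}(b): the coordinate map $f\mapsto (\langle f,h_\alpha\rangle)_\alpha$ is, by the previous paragraph, an isometry from this dense subspace into the weighted sequence space with weights $\tau_\alpha(\theta^2)$, and therefore extends to an isometry on all of $M^2_\theta$; taking norms yields \eqref{eq:can6} in general. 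Equivalently, $\{h_\alpha\,\tau_\alpha(\theta^2)^{-1/2}\}$ is an \onb\ of the Hilbert space $M^2_\theta$ and one applies Parseval, after matching the intrinsic $M^2_\theta$-coordinate with the duality coefficient $\langle f,h_\alpha\rangle$.

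I expect the main obstacle to be this last step: justifying, for a possibly distributional $f\in M^2_\theta$, that its Hermite series may be fed term-by-term into $V_{h}$ and that the summation and the weighted integration over $\rdd$ may be interchanged. This is exactly what the density of Lemma~\ref{l6}(b) together with the completeness of $M^2_\theta$ circumvents, reducing everything to the elementary orthogonality computation on finite sums; the convergence of $\sum_\alpha |c_\alpha|^2\,\tau_\alpha(\theta^2)$ then becomes equivalent to the membership $f\in M^2_\theta$ rather than an a priori hypothesis. The identification $c_\alpha=\langle f,h_\alpha\rangle$ of the Hermite coefficients with the intrinsic coordinates, recalled earlier in this section, is what keeps the two pairings consistent throughout.
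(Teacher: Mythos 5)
Your proof is correct and follows essentially the same route as the paper: expand $f$ in Hermite functions, use \eqref{eq:can1} to cancel the phase, and invoke the orthogonality of the monomials from Lemma~\ref{l6}(a) to kill the cross terms, identifying the diagonal term with $\tau_\alpha(\theta^2)$. You spell out the density/isometric-extension step from finite sums to general $f\in M^2_\theta$ more explicitly than the paper, which simply takes the convergence of Hermite expansions in modulation spaces for granted; otherwise the arguments coincide.
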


\par

\begin{proof}
Let $f= \sum _{\alpha \geq 0} c_\alpha h_\alpha $ be a finite linear
combination of Hermite functions. Since the \stft\ of
$f$ with respect to the Gaussian ${h} $ is given by 
$$
V_{h} f (\overline{z})  = \sum  c_\alpha  V_{h} h_\alpha (\overline{z})
 = e^{-\pi ix\cdot \xi }\sum _{\alpha \geq 0}  c_\alpha  e_\alpha (z) e^{-\pi |z|^2/2}
$$
in view of \eqref{eq:can1}, definition \eqref{eq:can2} gives
\begin{eqnarray*}
  \|f\|_{M^2_\theta }^2 &=& \intrdd |V_{h} f (z) |^2 \theta (z)^2 \,
  dz \\
&=& \sum _{\alpha,\beta \geq 0} c_\alpha \overline{c_\beta } \intrdd
e_\alpha (z) \overline{e_\beta (z)} \theta (z) ^2 \, e^{-\pi |z|^2} \, dz = \sum _{\alpha \geq 0} |c_\alpha |^2 \tau _\alpha (\theta ^2)
 \, .
\end{eqnarray*}
In the latter equalities it is essential that the
weight $\theta $ is radial in each \tf\ coordinate so that the
monomials $e_\alpha $ are orthogonal in $L^2_\theta (\cd , \mu )$. 
\end{proof}

\par

In the next proposition, which is due to Daubechies~\cite{daub88},
we represent the canonical localization  operator 
by a Hermite expansion. 

\par

\begin{prop}\label{p9}
  Let $\theta $ be a moderate, continuous weight function on $\rdd $
  that is radial in each \tf\ coordinate.

\par

 Then the  Hermite function $h_\alpha $ is an  eigenfunction of the localization
operator $J_\theta $ with  eigenvalue $\tau _\alpha (\theta )$ for $\alpha \in \bN _0^d$, and $J_\theta $ possesses the
  eigenfunction expansion 
  \begin{equation}
    \label{eq:can7}
    J_{\theta  } f = \sum _{\alpha \geq 0} \tau _\alpha (\theta )\langle f,
    h_\alpha \rangle  h_\alpha \qquad \text{ for all } f\in \lrd \, .
  \end{equation}
\end{prop}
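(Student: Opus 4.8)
The plan is to read off the action of $J_\theta$ on the Hermite basis directly from the weak definition~\eqref{eq:f1} and the explicit \stft\ formula~\eqref{eq:can1}, and then to exploit orthogonality. First I would test $J_\theta h_\alpha$ against an arbitrary Hermite function $h_\beta$. By~\eqref{eq:f1},
$$
\langle J_\theta h_\alpha , h_\beta \rangle = \langle \theta \, V_{h} h_\alpha , V_{h} h_\beta \rangle _{L^2(\rdd )} = \intrdd \theta (z)\, V_{h} h_\alpha (z)\, \overline{V_{h} h_\beta (z)}\, dz \, .
$$
Inserting~\eqref{eq:can1}, the unimodular phases $e^{-\pi i x\cdot \xi}$ cancel and the two Gaussians combine to $e^{-\pi |z|^2}$, so the integral collapses to
$$
\Big( \tfrac{\pi ^{|\alpha |}}{\alpha !}\Big) ^{1/2}\Big( \tfrac{\pi ^{|\beta |}}{\beta !}\Big) ^{1/2} \intrdd \theta (z)\, z^\alpha \overline{z^\beta}\, e^{-\pi |z|^2}\, dz \, .
$$
The crux is now Lemma~\ref{l6}(a): since $\theta$ is radial in each \tf\ coordinate, the monomials are orthogonal in $L^2_\theta (\cd , \mu )$, so this vanishes unless $\alpha = \beta$, and for $\alpha = \beta$ it equals $\tau _\alpha (\theta )$ by the definition~\eqref{eq:can17}. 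Hence $\langle J_\theta h_\alpha , h_\beta \rangle = \tau _\alpha (\theta )\, \delta _{\alpha \beta }$ for every $\beta$.

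To upgrade this identity of coefficients to the operator identity $J_\theta h_\alpha = \tau _\alpha (\theta ) h_\alpha$, I would note that by Lemma~\ref{l:1} the operator $J_\theta = A^{h}_\theta$ maps $\lrd$ boundedly into $M^2_{1/\theta }(\rd )$, whose predual is $M^2_\theta (\rd )$. Since the finite linear combinations of Hermite functions are dense in $M^2_\theta$ (Lemma~\ref{l6}(b)), an element of $M^2_{1/\theta }$ is determined by the values $\langle \cdot , h_\beta \rangle$; the computation above shows that $J_\theta h_\alpha$ and $\tau _\alpha (\theta ) h_\alpha$ share all of these, so they coincide in $M^2_{1/\theta }$, and since the right-hand side lies in $\lrd$ so does $J_\theta h_\alpha$. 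This proves that $h_\alpha$ is an eigenfunction with eigenvalue $\tau _\alpha (\theta )$. The expansion~\eqref{eq:can7} then follows by continuity: for $f\in \lrd$ I expand $f = \sum _\alpha \langle f, h_\alpha \rangle h_\alpha$ in $\lrd$ and apply the bounded operator $J_\theta$ term by term, obtaining $J_\theta f = \sum _\alpha \tau _\alpha (\theta )\langle f, h_\alpha \rangle h_\alpha$ with convergence in $M^2_{1/\theta }$.

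The only genuine obstacle is making the formal pairing rigorous when $\theta$ has exponential growth, so that the displayed integrals really converge and the term-by-term passage is justified. Since $\theta$ is moderate it is dominated by a submultiplicative, hence at most exponentially growing, weight, while $|V_{h} h_\beta (z)|$ decays like $|z|^{|\beta |} e^{-\pi |z|^2/2}$; therefore $\theta \, V_{h} h_\beta \in L^2(\rdd )$, and each pairing $\langle \theta V_{h} f, V_{h} h_\beta \rangle$ is a well-defined, continuous functional of $V_{h} f$. With this integrability in hand the interchange of summation and integration is legitimate, and the whole argument reduces to the orthogonality supplied by Lemma~\ref{l6}(a).
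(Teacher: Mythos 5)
Your proposal is correct and follows essentially the same route as the paper: compute $\langle J_\theta h_\alpha , h_\beta \rangle$ from the weak definition and the explicit formula \eqref{eq:can1}, invoke the orthogonality of the monomials in $L^2_\theta (\cd ,\mu )$ from Lemma~\ref{l6}(a) to get $\tau _\alpha (\theta )\delta _{\alpha \beta}$, and then extend by density/uniqueness of Hermite coefficients. The extra care you take with the duality $M^2_{1/\theta } = (M^2_\theta )'$ and with the convergence of the integrals is a welcome tightening of details the paper leaves implicit, but it is not a different argument.
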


\par

\begin{proof}
By Lemma~\ref{l6}(a) we find that,   for $\alpha \neq \beta $,   
\begin{eqnarray*}
 \langle J_{\theta}
h_\beta , h_\alpha \rangle &=& \int _{\cd } \theta (z) V_{h}
h_\beta (z) \,  \overline{V_{h} h_\alpha (z)}  \, dz \\
&=& \int _{\cd } \theta (z) e_\beta (z) \overline{e_\alpha (z)}
e^{-\pi |z|^2} \,  dz = 0\, .
\end{eqnarray*}
This implies that $J_\theta h_\alpha = ch_\alpha $ and therefore $c =
c \langle h_\alpha , h_\alpha \rangle = \langle J_\theta h_\alpha ,
h_\alpha \rangle = \tau _\alpha (\theta )$. 

\par

For a (finite) linear combination   $f = \sum _{\beta \geq 0} c_\beta h_\beta $, 
 we obtain 
\begin{multline*}
  J_\theta  f = \sum _{\alpha \geq 0} \langle J_\theta f , h_\alpha \rangle
  h_\alpha 
= \sum _{\alpha \geq 0} \sum _{\beta \geq 0} c_\beta \langle J_\theta
h_\beta  , h_\alpha \rangle h_\alpha
\\[1ex]
=  \sum _{\alpha \geq 0}\sum _{\beta \geq 0} \tau _\beta (\theta )  \delta
_{\alpha ,\beta } c_\beta   h_\alpha 
= \sum _{\alpha \geq 0} \tau _\alpha (\theta )  c_\alpha h_\alpha \, .
\end{multline*}
The proposition follows  because the Hermite functions span $M^2_{1/\theta }(\rd )$ and 
because  the coefficients of a Hermite expansion are  unique and given by
$c_\alpha = \langle f, h_\alpha )$. 
\end{proof}

\par

\begin{cor}\label{c10}
If $\theta $ is moderate and radial in each coordinate, then   $J_{\theta }: \lrd \to
M^2_{1/\theta} (\rd )$ is one-to-one  and
possesses dense range in $M_{1/\theta} ^2(\rd )$.
\end{cor}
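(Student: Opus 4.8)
The plan is to read off both properties directly from the eigenfunction expansion of Proposition~\ref{p9}, exploiting that the eigenvalues $\tau _\alpha (\theta )$ are strictly positive and that the Hermite functions form an \onb\ of $\lrd$. No new machinery is needed; the corollary is essentially a consequence of Propositions~\ref{p8}--\ref{p9} together with the density statement of Lemma~\ref{l6}(b).

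First I would prove that $J_\theta$ is one-to-one. Suppose $f\in \lrd$ satisfies $J_\theta f = 0$. By Proposition~\ref{p9} the Hermite expansion of $J_\theta f$ is
\[
J_\theta f = \sum _{\alpha \geq 0} \tau _\alpha (\theta )\,\langle f, h_\alpha \rangle\, h_\alpha .
\]
Since the Hermite coefficients of an element of a \modsp\ are unique (and those of the zero element all vanish), we get $\tau _\alpha (\theta )\,\langle f, h_\alpha \rangle = 0$ for every $\alpha$. As each $\tau _\alpha (\theta )$ is strictly positive (the symbol $\theta$ being strictly positive), this forces $\langle f, h_\alpha \rangle = 0$ for all $\alpha$, and because $\{h_\alpha : \alpha \geq 0\}$ is an \onb\ of $\lrd$ we conclude $f = 0$.

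Next I would establish the density of the range. Again by Proposition~\ref{p9} we have $J_\theta h_\alpha = \tau _\alpha (\theta ) h_\alpha$ with $\tau _\alpha (\theta ) > 0$, so each Hermite function satisfies $h_\alpha = \tau _\alpha (\theta )^{-1} J_\theta h_\alpha$ and hence lies in the range of $J_\theta$. Consequently the range of $J_\theta$ contains the linear span of all Hermite functions. The weight $1/\theta$ is again moderate, radial in each \tf\ coordinate, and of at most exponential growth (by \eqref{eq:25}, since $\theta$ is $v$-moderate with $v$ of at most exponential growth), so Lemma~\ref{l6}(b) applies to $M^2_{1/\theta}(\rd )$ and shows that this span is dense there. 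Therefore $J_\theta$ has dense range in $M^2_{1/\theta}(\rd )$.

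I do not expect a genuine obstacle here: once the eigenfunction expansion is in hand, the argument is bookkeeping. The only points requiring a moment's care are the boundedness of $J_\theta$ as a map $\lrd \to M^2_{1/\theta}(\rd )$, which is the special case $m = 1/\theta$, $\mu = 1$, $p=q=2$ of Lemma~\ref{l:1}, and the verification that $1/\theta$ meets the hypotheses of Lemma~\ref{l6}, so that the density statement may legitimately be invoked on the target space $M^2_{1/\theta}(\rd )$.
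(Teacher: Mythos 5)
Your argument is correct and coincides with the paper's own proof: injectivity from the strict positivity of the eigenvalues $\tau_\alpha(\theta)$ in the Hermite expansion of Proposition~\ref{p9}, and density of the range because it contains the finite linear combinations of Hermite functions, which are dense in $M^2_{1/\theta}(\rd)$ by Lemma~\ref{l6}(b). The extra care you take in checking that $1/\theta$ satisfies the hypotheses of Lemma~\ref{l6} is a welcome, if routine, addition.
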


\par

\begin{proof}
  The coefficients in $J_\theta  f= \sum _{\alpha \geq 0} \tau _\alpha (\theta )
  \langle f,   h_\alpha \rangle h_\alpha $ are unique. If $J_\theta f= 0$,
  then $\tau _\alpha (\theta )\langle f, h_\alpha \rangle = 0$, and since $\tau
  _\alpha (\theta )>0$ we obtain $\langle f, h_\alpha \rangle = 0$ and thus
  $f=0$. Clearly the range of $J_\theta $ in $M^2_{1/\theta} (
\rd ) $ contains the  finite linear combinations of Hermite functions, 
and these  are   dense in  $M^2_{1/\theta} (\rd )$ by Proposition~\ref{l6}. 
\end{proof}

\par

To show that $J_{\theta} $  maps $\lrd $  onto $M_{1/\theta} ^2(\rd )$ is
much more  subtle. For this we need a new type of  inequalities valid
for the weighted gamma functions  in \eqref{eq:can17}. 
By Proposition~\ref{p9} the number $\tau _{\alpha ,s}(\theta ) $ is exactly the eigenvalue of the
localization operator $J_{\theta ^s}$ corresponding to the
eigenfunction $h_\alpha $.

\par

\begin{prop}
  \label{p11b}
If $\theta \in \cM _w$ is continuous and radial in each \tf\
coordinate, then the mapping $s \mapsto \tau _{\alpha ,s}(\theta )$ is ``almost
multiplicative''. This means that for every $s,t\in \bR $ there exists
a constant $C= C(s,t)$ such that
\begin{equation}
  \label{eq:can118}
C\inv \leq   \tau _{\alpha ,s}(\theta ) \tau _{\alpha ,t}(\theta ) \tau
  _{\alpha ,-s-t}(\theta ) \leq C \qquad \text{
  for all multi-indices } \alpha  \, .
\end{equation}
\end{prop}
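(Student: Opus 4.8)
The plan is to recast the weighted gamma functions probabilistically and to isolate the single genuinely hard point, a variance-stabilization estimate for the Gamma distribution. First I would pass to polar coordinates in each \tf\ pair as in \eqref{conseqpolar}--\eqref{conseqpolar2}: integrating out the angles in \eqref{eq:can17} and invoking the radiality of $\theta$ (so that the monomials are orthogonal, Lemma~\ref{l6}(a)) turns the definition into
\begin{equation*}
\tau_{\alpha ,s}(\theta) = \int_{[0,\infty)^d} \theta_0(r)^s \, d\nu_\alpha(r), \qquad d\nu_\alpha = \bigotimes_{j=1}^d d\nu_{\alpha_j},
\end{equation*}
where, in the variable $x_j=\pi r_j^2$, each factor $\nu_n$ is the Gamma density $\tfrac{x^{n}}{n!}e^{-x}\,dx$. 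Since $\tau_{\alpha,0}(\theta)=1$ (the case $\theta\equiv 1$ noted after \eqref{eq:can17}), each $\nu_\alpha$ is a \emph{probability} measure and $\tau_{\alpha,s}(\theta)=\mathbb E_{\nu_\alpha}[\theta_0^{\,s}]$. This is the reformulation underlying the gamma-function inequality announced in the introduction.

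The lower bound is then almost free. The map $a\mapsto \log\tau_{\alpha,a}(\theta)=\log\mathbb E_{\nu_\alpha}[\theta_0^{\,a}]$ is convex in $a$ by H\"older's inequality and vanishes at $a=0$. Applying convexity to the exponents $s,t,-s-t$ with equal weights $\tfrac13$, whose average is $0$, gives
\begin{equation*}
\log\tau_{\alpha,s}(\theta)+\log\tau_{\alpha,t}(\theta)+\log\tau_{\alpha,-s-t}(\theta)\ \ge\ 3\log\tau_{\alpha,0}(\theta)=0,
\end{equation*}
so the product is $\ge 1$ for every $\alpha$, and the left inequality in \eqref{eq:can118} holds with $C=1$.

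For the upper bound I would normalize by a reference point $p=p(\alpha)$, taken coordinatewise to be the $r$-image of the mode or mean of $\nu_{\alpha_j}$, so that $p_j\asymp\sqrt{\alpha_j/\pi}$. Writing $\rho=\theta_0/\theta_0(p)$, the three powers of $\theta_0(p)$ cancel precisely because the exponents sum to zero, so $\tau_{\alpha,s}\tau_{\alpha,t}\tau_{\alpha,-s-t}=\mathbb E[\rho^{s}]\,\mathbb E[\rho^{t}]\,\mathbb E[\rho^{-s-t}]$, and it suffices to bound each factor $\mathbb E_{\nu_\alpha}[\rho^{u}]$ uniformly in $\alpha$. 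Here moderateness enters: choosing points of equal angle in each coordinate and using $\theta\in\cM_w$ together with the at-most-exponential growth $w(u)\le e^{a|u|}$ of the submultiplicative weight $w$, the radiality of $\theta$ and \eqref{eq:25} give $|\log\rho(r)|\le a\,|r-p|$, whence $\rho(r)^{u}\le e^{a|u|\,|r-p|}\le\prod_{j}e^{a|u|\,|r_j-p_j|}$. Because $\nu_\alpha$ is a product measure this factorizes, and everything reduces to the single one-dimensional estimate
\begin{equation*}
\sup_{n\ge 0}\ \mathbb E_{\nu_{n}}\!\big[e^{\,b\,|r-p_n|}\big]<\infty \qquad\text{for every fixed } b>0 .
\end{equation*}

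This last estimate is the main obstacle, and it is exactly the statement that $\sqrt{x}$ stabilizes the variance of the Gamma law. With $R_n=\sqrt{X_n/\pi}$, $X_n\sim\Gamma(n+1,1)$ and $p_n=\sqrt{m_n/\pi}$, $m_n\asymp n$, I would use $|\sqrt{X_n}-\sqrt{m_n}|=|X_n-m_n|/(\sqrt{X_n}+\sqrt{m_n})$ and split the integral. On the bulk $\{X_n\ge m_n/2\}$ the denominator is $\gtrsim\sqrt{m_n}$, so the exponent is $\lesssim |X_n-m_n|/\sqrt{m_n}$; since $X_n$ is a sum of $n+1$ i.i.d.\ unit exponentials, its centered exponential moments $\mathbb E\big[e^{c|X_n-m_n|/\sqrt{m_n}}\big]$ are bounded uniformly in $n$ by the explicit Laplace transform (a Bernstein-type bound). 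On the tail $\{X_n<m_n/2\}$ the integrand is at most $e^{c'\sqrt n}$ while a Chernoff bound gives probability $\le e^{-cn}$, so that part is negligible. Combining the two regions yields the uniform one-dimensional bound; feeding it back through the product-measure factorization gives $\mathbb E[\rho^{u}]\le C(u,d)$ for $u\in\{s,t,-s-t\}$, hence the upper bound in \eqref{eq:can118}. Together with the convexity lower bound this proves the proposition.
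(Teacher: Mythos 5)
Your proof is correct, but both halves run along genuinely different lines from the paper's. For the lower bound the paper does the hard work: it proves the pointwise estimate $f_n(x)=x^ne^{-x}/n!\geq c/\sqrt{n}$ on the window $[n-\sqrt{n}/2,\,n+\sqrt{n}/2]$, applies the mean value theorem to produce three points $\zeta,\zeta',\zeta''$ in a common box $D_\alpha$ of uniformly bounded diameter, and then compares $\theta_0$ at these points via moderateness and \eqref{eq:25}. Your observation that $u\mapsto\log\mathbb E_{\nu_\alpha}[\theta_0^{\,u}]$ is convex and vanishes at $u=0$, so that the product in \eqref{eq:can118} is $\geq 1$ outright, replaces that entire construction and even yields the sharp constant $C=1$; this is a real simplification of the paper's hardest step. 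Conversely, for the upper bound the paper is soft: $h_\alpha$ is a common eigenfunction of $J_{\theta^s}$, $J_{\theta^t}$, $J_{\theta^{-s-t}}$, so the product of eigenvalues is dominated by the $L^2$ operator norm of the composition, which is finite by repeated application of Lemma~\ref{l:1}. Your upper bound instead does the analysis by hand, via the sub-Gaussian concentration of $\sqrt{X_n}$ for $X_n\sim\Gamma(n+1,1)$ around $\sqrt{n}$ together with the exponential domination $w(u)\lesssim e^{a|u|}$ of any locally bounded submultiplicative weight; this is self-contained but requires the Laplace-transform and Chernoff estimates you sketch (all of which do go through, including the $n=0$ case and the factorization over coordinates). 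In effect you have swapped which inequality is hard. Both arguments rest on the same heuristic -- $\nu_n$ concentrates on a window of width $\asymp\sqrt{n}$ about $n$, which the change of variables $x\mapsto\sqrt{x/\pi}$ compresses to bounded width, over which a moderate weight oscillates by a bounded factor -- but the paper uses only the ``density bounded below on the window'' half of this, while you use the ``exponentially small mass off the window'' half. Your route has the additional merit of making no appeal to the boundedness theory of localization operators, at the cost of a longer probabilistic computation.
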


\par

\begin{proof}
The upper bound is easy. By Lemma~\ref{l6} the Hermite function
$h_\alpha $ is a common eigenfunction of $J_{\theta ^s}, J_{\theta
  ^t}$, and $J_{\theta ^{-s-t}}$. Since the operator $J_{\theta ^s} J_{\theta
  ^t}J_{\theta ^{-s-t}}$ is bounded on $\lrd $ by repeated application of
Lemma~\ref{l:1},  we obtain that
\begin{equation}
  \label{eq:24}
\tau _{\alpha ,s}(\theta ) \tau _{\alpha ,t}(\theta )\tau _{\alpha ,-s-t} (\theta )=  \| J_{\theta ^s}J_{\theta
  ^t}J_{\theta ^{-s-t}}h_\alpha \|_{L^2} \leq C \|h_\alpha  \|_{L^2} = C \, 
\end{equation}
for all $\alpha \geq 0$.  The constant $C$ is operator norm of $ J_{\theta ^s}J_{\theta
  ^t}J_{-s-t}$ on $\lrd $. 

For the lower bound 
  we rewrite the definition of $\tau _{\alpha,s}(\theta )$ and make it more
  explicit by using polar coordinates $z_j = r_j e^{i\vf _j}, r_j\geq
  0, \vf _j\in [0,2\pi )$,  in each variable. Then  by assumption
  $\theta (z) = \theta _0(r)$ for some
  continuous moderate function $\theta _0$ on $\bR _+^d$, and  we obtain 
\begin{multline}
  \label{eq:20}
    \tau _{\alpha ,s}(\theta ) =  \intrdd \theta (z)^s \frac{\pi ^{|\alpha|}}{\alpha !} |z^\alpha |^2
  e^{-\pi |z|^2}\, dz
\\[1ex]
= (2\pi )^d \int _{\rd _+} \theta _0 (r)  \frac{\pi ^{|\alpha|}}{\alpha !} r^{2 \alpha  }   e^{-\pi |r|^2} \, r_1 \cdots r_d \,  dr
\\[1ex]
=  (2\pi )^d \int _{0}^\infty \dots \int _{0}^\infty  \theta _0(r_1,
\dots , r_d)   \prod _{j=1}^d
\frac{1}{\alpha _j!}\, (\pi r_j^2) ^{\alpha  _j}   e^{-\pi r_j^2} \, r_1
\cdots r_d \, dr_1
\cdots dr_d 
\\[1ex]
=   \int _{0}^\infty \dots \int _{0}^\infty  \theta _0 \big( \sqrt{u_1/\pi},
\dots , \sqrt{u_d/\pi}\big)   \prod _{j=1}^d
\frac{u_j^{\alpha_j}}{\alpha _j!}\,  e^{-u_j}\, du_1 \cdots du_d \, .
\end{multline}
We focus on a single factor in the integral first. The function
$f_n(x) = x^n e^{-x}/n!$ takes its maximum at $x=n$ and
$$
f_n(n) = \frac{1}{n!}n^n e^{-n} = (2\pi n)^{-1/2}\big ( 1+\cO (n^{-1})\big )
$$
by Stirling's
formula. Furthermore, $f_n$ is almost constant on the interval
$[n-\sqrt{n}/2, n+\sqrt{n}/2]$ of length $\sqrt{n}$. On this interval
the minimum of $f_n $ is taken at one of the endpoints $n\pm
\sqrt{n}/2$, where the value is 
$$
f_n(n\pm \sqrt{n}/2) = \frac{1}{n!} (n\pm \sqrt{n}/2)^n e^{-(n\pm
\sqrt{n}/2)} =  \frac{1}{n!}\frac{n^n}{e^n} \big(1\pm
\frac{1}{2\sqrt{n}}\big)^n e^{\mp \sqrt{n}/2} \, .
$$
Since
$$
\lim _{n\to \infty } \frac {\displaystyle{ (2\pi
    n)^{1/2}}\Big( \frac{n}{e}\Big)^n}{n!} =1\quad \text{and}\quad \lim
_{n\to \infty } \big(1\pm
\frac{1}{2\sqrt{n}}\big)^n e^{\mp \sqrt{n}/2} = e^{-1/8}
$$
by Stirling's formula and straight-forward applications of Taylor's formula, we find that 
\begin{equation}
  \label{eq:19}
  f_n(x) \geq \frac{c}{\sqrt{n}} \qquad \text{ for } x\in
  [n-\sqrt{n}/2, n+\sqrt{n}/2] \text{ and all } n\geq 1 \, .
\end{equation}
For $n=0$ we use the inequality $f_0(x) \geq e^{-1/2}$ for $x\in
[0, 1/2]$. 

\par

Now consider the products of the  $f_n$'s occuring in the integral above. 
For $\alpha = (\alpha_1, \dots , \alpha _d) \in \bN _0^d$ define the
boxes 
$$
C_\alpha = \prod _{j=1}^d \big [ \alpha_j-2^{-1}\sqrt{\alpha_j}\, ,\, 
\alpha_j +2^{-1}\max (\sqrt{\alpha_j},1) \big ] \subseteq \rd \, ,
$$
%
with volume $\mathrm{vol}\, (C_\alpha ) = \prod _{j=1}^d \sqrt{\max
  (2^{-1},\alpha _j)} $.  Consequently, on the box $C_\alpha $ we have  
\begin{equation}
  \label{eq:22}
 \prod _{j=1}^d
\frac{u_j^{\alpha_j}}{\alpha _j!}\,  e^{-u_j}\, \geq C_0 \prod _{j=1}^d
\frac{1}{\sqrt{\max( 2\inv ,\alpha _j)}} = C_0 \big(\mathrm{vol}\, C_\alpha \big) \inv 
\end{equation}
for some constant $C_0>0$ which is independent of $\alpha \in \bN _0^d$. 

\par

Next, to take  into account the coordinate change  in \eqref{eq:20},
we define the box     
$$
D_\alpha = \prod _{j=1}^d \Big [ \frac {\displaystyle{(\alpha_j-2^{-1}\sqrt{\alpha_j})^{1/2}}}{\sqrt \pi}\, ,\, 
\frac {\displaystyle{(\alpha_j +2^{-1}\max (\sqrt{\alpha_j},1))^{1/2}}}{\sqrt \pi} \Big ] \subseteq \rd \, ,
$$
%
Furthermore the  length of each edge of $D_\alpha $ is 
$$
\pi ^{-1/2} \Big(\big(\alpha _j+\sqrt{\alpha_j}/2\big)^{1/2}  -
\big(\alpha _j-\sqrt{\alpha_j}/2\big)^{1/2}\Big) \leq \pi ^{-1/2}
$$
when $\alpha_j \geq 1$ and likewise for $\alpha _j=0$. Consequently, 
\begin{equation}
  \label{eq:21A}
  \text{if} \,\, z_1, z_2 \in D_\alpha , \,\, \text{ then }  z_1-z_2
  \subseteq [-\pi  ^{-1/2}, \pi ^{-1/2}]^d \, .
\end{equation}

\par

After these preparations we start the lower estimate of $\tau _{\alpha
  ,s}(\theta )$. Using \eqref{eq:22} we obtain
\begin{eqnarray*}
  \tau _{\alpha ,s}(\theta ) &= &  \int _0^\infty \dots \int _0^\infty    \theta
  _0 \big( \sqrt{u_1/\pi},
\dots , \sqrt{u_d/\pi}\big)^s     \prod _{j=1}^d
\frac{u_j^{\alpha_j}}{\alpha _j!}\,  e^{-u_j}\, du_1 \cdots du_d
\\[1ex]
&\geq &C_0 \frac{1}{\mathrm{vol}\, (C_\alpha ) }\int  _{C_\alpha }
\theta _0 \big( \sqrt{u_1/\pi},
\dots , \sqrt{u_d/\pi}\big)^s  \,  du_1 \cdots du_d \, .
\end{eqnarray*}
Since $\theta $ is continuous, the mean value theorem asserts that
there is a point $z = z(\alpha ,s)  = (z_1, z_2, \dots , z_d
) \in C_\alpha $, such that 
$$
\tau _{\alpha ,s}(\theta ) \geq C_0 \, \theta _0 \big( \sqrt{z_1/\pi},
\dots , \sqrt{z_d /\pi}\big)^s \, .
$$
Note that the point with coordinates $\zeta = \zeta (\alpha ,s) = \big( \sqrt{z_1/\pi},
\dots , \sqrt{z_d /\pi}\big)$ is in $D_\alpha $, consequently
$$
\tau _{\alpha ,s}(\theta ) \geq C_0 \, \theta _0 (\zeta (\alpha ,s)) \qquad \text{
  for } \zeta (\alpha ,s) \in D_\alpha \, .
$$
Finally
\begin{equation}
  \label{eq:23}
\tau _{\alpha ,s}(\theta )\,  \tau _{\alpha ,t}(\theta ) \, \tau _{\alpha ,-s-t}(\theta )  \geq 
C_0^3\,  \theta    _0 (\zeta )^s \, \theta    _0 (\zeta ' )^t \,
\theta    _0 (\zeta '')^{-s-t}
  \end{equation}
for points $\zeta , \zeta ' ,\zeta '' \in D_\alpha $. 
Since the weight $\theta $ is a $w$-moderate,  $\theta _0 $ satisfies 
$$
\frac{\theta _0(z_1)}{\theta _0(z_2)}  \geq \frac{1}{w(z_1-z_2)} \qquad
z_1, z_2 \in \rd \, .
$$
Since $\zeta , \zeta ' ,\zeta '' \in D_\alpha $,  the differences
$\zeta -\zeta ''$ and  $\zeta' -\zeta ''$    are in the cube $[-\pi
^{-1/2}, \pi ^{-1/2}]^d$ as
observed in \eqref{eq:21A}.  We conclude the
non-trivial part of this estimate by 
$$
\tau _{\alpha ,s}(\theta ) \tau _{\alpha ,t}(\theta ) \tau _{\alpha ,-s-t}(\theta ) \geq C_0^3
\frac{1}{w(\zeta - \zeta '')^s} \frac{1}{w(\zeta ' -\zeta '')^t } \geq 
C_0^3 \Big( \max _{z\in [-\pi ^{-1/2}, \pi ^{-1/2}]^d} w(z)\Big)^{-s-t} = C \, .
$$
The proof is complete. 
\end{proof}

\par

The next result provides a sort of symbolic calculus for the canonical
localization operators $J_{\theta ^s}$. Although the mapping $s\to
J_{\theta ^s}$ is not  homomorphism from $\bR $ to operators, it is
multiplicative modulo bounded operators. 

\par

\begin{tm}
  Let $\theta $ and $\mu $ be two  moderate, continuous weight functions on $\rdd $
  that are radial in each \tf\ variable. For every $r,s \in \bR $ there
  exists an operator $V_{s,t}$ that is invertible on every $M^2_{\mu}(\rd )$
  such that
$$
J_{\theta ^s} J_{\theta ^t} J_{\theta ^{-s-t}} =  V_{s,t} \, .
$$
  \end{tm}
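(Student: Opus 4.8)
The plan is to diagonalize the composition $V_{s,t} = J_{\theta^s}J_{\theta^t}J_{\theta^{-s-t}}$ in the Hermite basis and then read off invertibility on $M^2_\mu(\rd)$ from the two-sided eigenvalue bound already secured in Proposition~\ref{p11b}. The whole argument rests on the fact that, for weights radial in each \tf\ coordinate, every $J_{\theta^r}$ acts diagonally with respect to the fixed orthonormal system $\{h_\alpha\}$, so the threefold composition is diagonal as well.

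First I would check that $V_{s,t}$ maps $M^2_\mu(\rd)$ boundedly into itself. Since the Gaussian window $h$ belongs to $M^1_v$ for every submultiplicative weight $v$ (as used in the proof of Theorem~\ref{t41}), Lemma~\ref{l:1} applies to each factor, and I would chain the three mapping properties
$$
J_{\theta^{-s-t}}:M^2_\mu \to M^2_{\mu\theta^{s+t}},\qquad
J_{\theta^{t}}:M^2_{\mu\theta^{s+t}} \to M^2_{\mu\theta^{s}},\qquad
J_{\theta^{s}}:M^2_{\mu\theta^{s}} \to M^2_{\mu},
$$
so that the weights telescope back to $\mu$ and $V_{s,t}$ is bounded on $M^2_\mu$. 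Next I would diagonalize: by Proposition~\ref{p9} one has $J_{\theta^r}h_\alpha = \tau_{\alpha,r}(\theta)\,h_\alpha$, a statement about the single function $h_\alpha$ and hence independent of the ambient space. Thus on every finite linear combination of Hermite functions $V_{s,t}$ acts as the diagonal operator $h_\alpha \mapsto \lambda_\alpha h_\alpha$ with $\lambda_\alpha = \tau_{\alpha,s}(\theta)\,\tau_{\alpha,t}(\theta)\,\tau_{\alpha,-s-t}(\theta)$, and Proposition~\ref{p11b} says precisely that $C\inv\le \lambda_\alpha \le C$ for all $\alpha$.

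Finally I would transfer invertibility to $M^2_\mu$ through the coefficient description of the norm. Proposition~\ref{p8} (with $\mu$ in place of $\theta$) gives $\|f\|_{M^2_\mu}^2 = \sum_\alpha |\langle f,h_\alpha\rangle|^2\,\tau_\alpha(\mu^2)$, so the Hermite coefficients identify $M^2_\mu$ isometrically with a weighted $\ell^2$-space, under which $V_{s,t}$ is exactly multiplication by the sequence $(\lambda_\alpha)$. I would then set $W f = \sum_\alpha \lambda_\alpha\inv\langle f,h_\alpha\rangle h_\alpha$; the same norm formula gives $\|Wf\|_{M^2_\mu}\le C\|f\|_{M^2_\mu}$, so $W$ is bounded on $M^2_\mu$, and $WV_{s,t}=V_{s,t}W=\Id$ holds on the finite Hermite combinations, which are dense in $M^2_\mu$ by Lemma~\ref{l6}(b). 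Extending by continuity yields $W = V_{s,t}\inv$, so $V_{s,t}$ is invertible on $M^2_\mu$.

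I do not expect a serious obstacle here, since the genuinely hard estimate — the uniform two-sided bound on the eigenvalues — is already packaged in Proposition~\ref{p11b}. The one point that needs a little care is the passage from the pointwise eigenrelation of Proposition~\ref{p9} (proved on $\lrd$) to an operator identity on $M^2_\mu$; this is handled cleanly by working on the dense subspace of finite Hermite combinations and invoking the coefficient isometry of Proposition~\ref{p8} together with the density in Lemma~\ref{l6}(b).
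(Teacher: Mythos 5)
Your argument is correct and follows essentially the same route as the paper: diagonalize $V_{s,t}$ in the Hermite basis via Proposition~\ref{p9}, invoke the two-sided eigenvalue bound of Proposition~\ref{p11b}, and use the coefficient description of the $M^2_\mu$-norm from Proposition~\ref{p8} to conclude that both $V_{s,t}$ and its formal inverse are bounded, hence $V_{s,t}$ is invertible. The preliminary telescoping of weights via Lemma~\ref{l:1} is harmless but redundant, since the coefficient isometry already gives boundedness on $M^2_\mu$.
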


\par

  \begin{proof}
For $s,t\in \bR $ fixed,     set $\gamma (\alpha ) = \tau _{\alpha ,s}(\theta )
\tau _{\alpha ,t}(\theta )\tau _{\alpha ,-s-t}(\theta ) $ and 
$$
V_{s,t} f = \sum _{\alpha \geq 0} \gamma (\alpha ) \langle f, h_\alpha
\rangle h_\alpha  \, .
$$
Clearly, $  V_{s,t} = J_{\theta ^s} J_{\theta ^t} J_{\theta ^{-s-t}}
$. 
Since $C\inv \leq \gamma (\alpha ) \leq C$ for all $\alpha \geq 0$ by
Proposition~\ref{p11b}, Proposition ~\ref{p8}  implies that  $
V_{s,t}$ is bounded on  every \modsp\
$M_\mu ^2$. Likewise the formal  inverse operator  $V_{s,t}\inv f   = \sum _{\alpha \geq 0} \gamma
(\alpha ) \inv  \langle f, h_\alpha \rangle h_\alpha $ is bounded on $M_\mu ^2 (\rd )$, consequently $V_{s,t}$
is invertible on $M^2_\mu (\rd )$. 
  \end{proof}

We can now finish the proof of Theorem~\ref{t7}. 

\begin{proof}[Proof of Theorem~\ref{t7}]
Choose  $s=1$ and $t=-1$, then $J_\theta \jjj = V_{1,-1} $ is invertible on
$L^2$.  
Similarly, the choice $s=-1, t=1$ yields that $\jjj J_\theta =
V_{-1,1}$ is invertible on $M^2_\theta$. 
The factorization $J_\theta \jjj = V_{1,-1} $ implies that  $\jjj $ is
one-to-one from $L^2$ to $M^2_\theta$ and that  $J_\theta 
$ maps $M_{\theta }^2$ onto $L^2$. 
The factorization $ \jjj J_\theta = V_{-1,1} $ implies that  $J_\theta 
$ is
one-to-one from $M^2_\theta$ to $L^2$ and that  $J_{1/\theta}$  maps $L^2$ onto $M_{\theta }^2$. 

We have proved that $J_\theta $ is an isomorphism from $M^2_\theta$ to
$L^2$ and that $J_{1/\theta}$ is an isomorphism from $L^2$ to 
$M^2_{\theta}$. The other isomorphisms are proved  similarly. 
\end{proof}

\par

\rem\ In dimension $d=1$  the  invertibility of $J_\theta J_{\theta
  \inv}$ follows from the equivalence $\tau _{n,1}(\theta ) \tau
_{n,-1}(\theta ) \asymp 1$, which can 
be expressed as the following inequality for weighted gamma functions: 
\begin{equation}
  \label{eq:can9}
C\inv \leq    \int _0^\infty \theta _0(\sqrt{x/\pi}) \frac{x^n}{n!} e^{-x} dx
  \, \int _0^\infty \frac{1}{\theta _0(\sqrt{x/\pi})}
  \frac{x^n}{n!} e^{-x} dx \leq C
\end{equation}
for all $n\ge 0$. Here $\theta  _0$ is the same as before. It is a curious and fascinating fact   that this inequality implies that the
localization  operator $J_{1/\theta }$ is an isomorphism between  $L^2(\bR
)$ and $M^2_\theta (\bR )$.  

\par

\section{The General  Isomorphism Theorems}

\par

In Theorem~\ref{t13} we will  state the general isomorphism
theorems. The strategy of the proof is similar  to  that of Theorem~3.2
in~\cite{GT10}. The main tools are the theorems about the spectral
invariance of the generalized Sj\"ostrand classes ~\cite{gro06} and the
existence of a canonical isomorphism between $\lrd $ and $M^2_\theta
(\rd )$ established in Theorem~\ref{t7}. 

%
%
%
%

\par

\subsection{Variations on Spectral Invariance}
We first introduce the tools concerning the spectral invariance of
\psdo s. Recall the following results from \cite{gro06}. 

\par

\begin{tm}\label{spec}
Let $v$ be a submultiplicative weight on $\rdd $ such that
\begin{equation}\label{GRScond}
\lim _{n\to  \infty } v(nz)^{1/n} =1\quad \text{for all } \, z\in \rdd \, ,
\end{equation}
and let $\tilde v$ be the same as in \eqref{vtilde}. If $T\in \mathrm{Op}(\mif _{\tilde v})$ and $T$ is invertible on $\lrd $, then
$T\inv \in \mathrm{Op}(\mif _{\tilde v})$.
 
Consequently, $T$ is invertible simultaneously on all \modsp s 
$M^{p,q}_{{\mu}}(\rd )
$ for $1\leq p,q\leq \infty $ and all $\mu \in \cM_v$. 
  \end{tm}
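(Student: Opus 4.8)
The plan is to reduce the operator statement to the inverse-closedness of a weighted matrix algebra, for which the condition \eqref{GRScond} is the decisive ingredient, and then to transfer invertibility from $\lrd$ to the modulation spaces by the known boundedness of Sj\"ostrand-class operators. I would treat the two assertions separately: first that $\mathrm{Op}(\mif_{\tilde v})$ is inverse-closed in $B(\lrd)$, and then the simultaneous invertibility on all $M^{p,q}_\mu$.

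First I would discretize using the kernel characterization of Theorem~\ref{p3}. Fix a window $g \in \mvv$ generating a Gabor frame $\{\pi(\lambda)g : \lambda \in \Lambda\}$ for $\lrd$, with canonical dual window $\gamma$. By Theorem~\ref{p3}, membership $T \in \mathrm{Op}(\mif_{\tilde v})$ is equivalent to the Gabor kernel $K_T(y,z) = \langle T\pi(z)g, \pi(y)g\rangle$ being dominated by some $H \in L^1_v(\rdd)$; sampling on the lattice, the Gabor matrix $(K_T(\lambda,\mu))_{\lambda,\mu \in \Lambda}$ then satisfies $|K_T(\lambda,\mu)| \le a(\lambda-\mu)$ for some $a \in \ell^1_v(\Lambda)$, i.e.\ it lies in the convolution-dominated (Baskakov--Sj\"ostrand) matrix algebra $\mathcal{C}_v(\Lambda)$. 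Conversely, the frame synthesis and analysis operators turn any matrix in $\mathcal{C}_v(\Lambda)$ back into an operator with Weyl symbol in $\mif_{\tilde v}$. The assignment $T \mapsto (K_T(\lambda,\mu))$ intertwines operator composition with matrix multiplication up to the frame operator, whose matrix again lies in $\mathcal{C}_v(\Lambda)$; thus $\mathrm{Op}(\mif_{\tilde v})$ is, modulo this fixed correction, the Banach algebra $\mathcal{C}_v(\Lambda)$ continuously embedded in $B(\ell^2(\Lambda))$.

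The crux, and the step I expect to be the main obstacle, is the inverse-closedness of $\mathcal{C}_v(\Lambda)$ in $B(\ell^2(\Lambda))$. Here I would invoke the Gr\"ochenig--Leinert theorem: since $v$ is even, submultiplicative, and satisfies the Gelfand--Raikov--Shilov condition \eqref{GRScond}, the associated weighted twisted-convolution algebra is symmetric, and Hulanicki's lemma then forces the spectral radius of a self-adjoint element of $\mathcal{C}_v(\Lambda)$ to equal its $\ell^2$-operator norm, which yields inverse-closedness. Granting this, if $T$ is invertible on $\lrd$ then its Gabor matrix is invertible in $B(\ell^2(\Lambda))$, hence its inverse remains in $\mathcal{C}_v(\Lambda)$, and running the converse direction of Theorem~\ref{p3} shows $T\inv \in \mathrm{Op}(\mif_{\tilde v})$. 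It is precisely this symmetry step that fails for weights of genuine exponential growth and that \eqref{GRScond} is designed to secure.

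For the final consequence I would use the boundedness of Sj\"ostrand-class operators on modulation spaces: every $S \in \mathrm{Op}(\mif_{\tilde v})$ maps $M^{p,q}_\mu(\rd)$ into itself boundedly, for all $1 \le p,q \le \infty$ and all $\mu \in \cM_v$. Applying this to both $T$ and $T\inv$, which now both belong to $\mathrm{Op}(\mif_{\tilde v})$, produces bounded operators on each $M^{p,q}_\mu$ whose composition in either order is the identity. Hence $T$ is invertible on every such modulation space, which completes the proof.
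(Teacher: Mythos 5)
The paper does not actually prove Theorem~\ref{spec}: it is quoted as a known result from \cite{gro06} (``Recall the following results from \cite{gro06}''), so there is no internal proof to compare against. Your outline reconstructs, in broad strokes, the argument of that reference: almost diagonalization of $\mathrm{Op}(\mif _{\tilde v})$ by a Gabor frame (Theorem~\ref{p3}), identification with the convolution-dominated matrix algebra $\cC _v(\Lambda )$, inverse-closedness of that algebra in $\cB (\ell ^2(\Lambda ))$ via symmetry of the weighted $\ell ^1_v$-algebra under the GRS condition (Gr\"ochenig--Leinert plus Hulanicki's lemma), and transfer of the invertibility to all $M^{p,q}_\mu$ by boundedness of Sj\"ostrand-class operators. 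That is the correct skeleton, and the last paragraph (both $T$ and $T^{-1}$ bounded on each $M^{p,q}_\mu$, composing to the identity) is fine.

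Two steps are glossed over in a way that matters. First, sampling a dominating function $H\in L^1_v(\rdd )$ on a lattice need not give an $\ell ^1_v(\Lambda )$ sequence; one needs $H$ in an amalgam space $W(L^\infty ,\ell ^1_v)$, which is exactly why Theorem~\ref{p3} allows the dominating function to be chosen semi-continuous. Second, and more seriously, the assertion ``if $T$ is invertible on $\lrd$ then its Gabor matrix is invertible in $\cB (\ell ^2(\Lambda ))$'' is false as stated: the matrix $M(T)=C_gTD_g$ always has the nontrivial kernel of the synthesis operator $D_g$, because $\cG (g,\Lambda )$ is an overcomplete frame (it cannot be a Riesz basis for $g\in \mvv$ by Balian--Low). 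The standard repair, which is the one carried out in \cite{gro06}, is to observe that the cross-Gramian built from the canonical dual window $\gamma$ (itself in $M^1_v$, another nontrivial input) is an idempotent $P$ in the same matrix algebra projecting onto the range of the analysis operator, that $M(T)$ is invertible on $\mathrm{Ran}\,P$, and then to apply inverse-closedness to $M(T)+(I-P)$ or to work in the corner algebra $P\cC _vP$. Your remark that the correspondence holds only ``modulo the frame operator'' shows you see the issue, but the proof as written does not close it.
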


\par

Condition \eqref{GRScond} is usually called the 
Gelfand-Raikov-Shilov (GRS) condition.

\par

We  prove a more  general form of spectral invariance.
Since we have formulated all results about the canonical localization operators $J_\theta $ 
for radial weights only, we will assume from now on that all weights
are radial in each coordinate. In this case
$$
\tilde v(x,\xi ,\eta ,y) = v(-\eta ,y) = v(y,\eta ),
$$
and we do not need the somewhat ugly distinction
between $v$ and $\tilde v$. 

\par

\begin{tm}\label{t12}
  Assume that $v$ satisfies the GRS-condition, $\theta ^2 \in \cM _v$,
  and that both $v$ and $\theta $ are radial  in each
  \tf\ coordinate.

If $T\in \mathrm{Op}(\mif _v)$ and $T$  is invertible on $M^2_\theta (\rd )$, then $T$
  is invertible on $\lrd $. 

As a consequence $T $ is invertible on every \modsp\ $M^{p,q}_\mu (\rd )$ for
$1\leq p,q\leq \infty $ and $\mu \in \cM _v$.  
\end{tm}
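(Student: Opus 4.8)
The plan is to transport the invertibility hypothesis from $M^2_\theta (\rd )$ to $\lrd$ by conjugating $T$ with the canonical isomorphisms $J_\theta = A^{h}_\theta$ and $\jjj = A^{h}_{1/\theta}$ of Theorem~\ref{t7}, to control the Weyl symbol of the sandwiched operator by Theorem~\ref{t41}, and then to feed the result into the basic spectral invariance of Theorem~\ref{spec}. The feature I intend to exploit is that each of $J_\theta$ and $\jjj$ is an isomorphism between \emph{two} different pairs of spaces --- $J_\theta\colon M^2_\theta\to\lrd$ and also $J_\theta\colon\lrd\to M^2_{1/\theta}$, and symmetrically for $\jjj$ --- which will let me first push invertibility from $M^2_\theta$ onto $\lrd$ and then pull it back onto $T$ itself.

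First I would set $S=J_\theta\,T\,\jjj$. Reading the three factors through the isomorphisms $\jjj\colon\lrd\to M^2_\theta$, then $T\colon M^2_\theta\to M^2_\theta$, then $J_\theta\colon M^2_\theta\to\lrd$, we see immediately that $S$ is invertible on $\lrd$ as a composition of isomorphisms. Next I would determine its symbol class: since $T\in\mathrm{Op}(\mif_v)$, since $\theta^2\in\cM_v$ forces $\theta\in\cM_{v^{1/2}}$, and since the Gaussian window lies in $M^1_v$, Theorem~\ref{t41} applied with $s=1$ (and with $\tilde v=v$, as $v$ is radial) gives $S=J_\theta T\jjj\in\mathrm{Op}(\mif_{v^{1/2}})$.

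Because $v^{1/2}$ is again submultiplicative and inherits the GRS-condition from $v$, Theorem~\ref{spec} now applies to $S$: being invertible on $\lrd$, it is invertible simultaneously on every $M^{p,q}_\mu (\rd )$ with $\mu\in\cM_{v^{1/2}}$, in particular on $M^2_{1/\theta}(\rd )$ (note $1/\theta\in\cM_{v^{1/2}}$). The decisive step is then to reinterpret the \emph{same} operator $S=J_\theta T\jjj$ on $M^2_{1/\theta}(\rd )$ using the other incarnations of the canonical maps from Theorem~\ref{t7}, namely $\jjj\colon M^2_{1/\theta}\to\lrd$ and $J_\theta\colon\lrd\to M^2_{1/\theta}$, with the $L^2$-bounded operator $T$ in the middle. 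Since $S$ is invertible on $M^2_{1/\theta}(\rd )$ and the flanking maps are isomorphisms, the identity $T=J_\theta\inv S\,\jjj\inv$ exhibits $T$ as a composition of isomorphisms of $\lrd$, so $T$ is invertible on $\lrd$. The stated consequence is then immediate: $T\in\mathrm{Op}(\mif_v)$ is invertible on $\lrd$, so a direct application of Theorem~\ref{spec} with the weight $v$ itself yields $T\inv\in\mathrm{Op}(\mif_v)$ and invertibility of $T$ on every $M^{p,q}_\mu (\rd )$ with $\mu\in\cM_v$ and $1\le p,q\le\infty$.

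I expect the main difficulty to be bookkeeping rather than conceptual. One must check that the single formula $S=J_\theta T\jjj$ can legitimately be read as an operator on $\lrd$ and, with the roles of the factors shifted, also as an operator on $M^2_{1/\theta}(\rd )$, and that the weight exponents $v^{1/2}$ and $v$ together with the moderateness requirements $\theta\in\cM_{v^{1/2}}$ and $1/\theta\in\cM_{v^{1/2}}$ match the hypotheses of Theorems~\ref{t41} and~\ref{spec}. The only genuinely substantive input is Theorem~\ref{t41}, which guarantees that flanking $T$ by $J_\theta$ and $\jjj$ keeps us inside the generalized Sj\"ostrand class while merely halving the exponent of the weight from $v$ to $v^{1/2}$; everything else is an orchestration of the canonical isomorphisms of Theorem~\ref{t7} with standard spectral invariance.
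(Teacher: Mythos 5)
Your proposal is correct and follows the paper's strategy almost exactly: the same conjugated operator $S=J_\theta T\jjj$, the same appeal to Theorem~\ref{t41} to place $S$ in $\mathrm{Op}(\mif_{v^{1/2}})$, and the same use of spectral invariance (Theorem~\ref{spec}). The only divergence is in the endgame. The paper applies Theorem~\ref{spec} to conclude $S\inv\in\mathrm{Op}(\mif_{v^{1/2}})$ and then applies Theorem~\ref{t41} a \emph{second} time to the identity $T\inv=\jjj\, S\inv J_\theta$, obtaining $T\inv\in\mathrm{Op}(\mif)$ and hence $L^2$-boundedness of $T\inv$. You instead invoke the ``consequence'' clause of Theorem~\ref{spec} to transfer invertibility of $S$ from $\lrd$ to $M^2_{1/\theta}(\rd)$ (legitimate, since $\theta^2\in\cM_v$ gives $1/\theta\in\cM_{v^{1/2}}$), and then exploit the fact that Theorem~\ref{t7} furnishes \emph{two} incarnations of each canonical map, reading $T=J_\theta\inv S\,\jjj\inv$ through the chain $\lrd\to M^2_{1/\theta}\to M^2_{1/\theta}\to\lrd$. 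This saves the second application of Theorem~\ref{t41} at the cost of the consistency check that the two readings of $S$ agree (they do, since all operators involved are diagonal in, or at least defined consistently on, the dense span of Hermite functions). Both routes are valid and of essentially equal length; your version makes slightly heavier use of the two-sided statement of Theorem~\ref{t7}, the paper's makes heavier use of the symbolic calculus.
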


\par

\begin{proof}
  Set $\widetilde{T} = J_\theta T \jjj $. By Theorem~\ref{t7}, $\jjj $ is an
  isomorphism from $\lrd $ onto $M^2_\theta (\rd )$ and $J_\theta $ is
  an isomorphism from $M_\theta ^2 (\rd )$ onto $\lrd $, therefore
  $\widetilde{T} $ is an isomorphism on $\lrd $.

\begin{equation} \label{diagram1}
\begin{matrix}
&M^2_\theta   & \stackrel{T}{\longrightarrow} & M^2_\theta & \cr
&\uparrow J_{1/\theta}  & &\downarrow J_\theta & \cr
 &\lrd \,  &\stackrel{\widetilde{T}}{\longrightarrow} &\lrd  &
\end{matrix}
\end{equation}

By Theorem~\ref{t41} the operator $\widetilde{T} $ is in $\mathrm{Op}( \mif
_{v^{1/2}  })$. 
Since   $\widetilde{T}  $ is invertible on 
$\lrd $,  Theorem~\ref{spec} on the spectral invariance of the
symbol class $\mif _{v^{1/2} }$ implies that the inverse operator
$\widetilde{T}$ also possesses a symbol in $ \mif
_{v^{1/2}}$, i.e.,  $\widetilde{T}\inv \in \mathrm{Op}(\mif _{v^{1/2}})$. 

Now, since $\widetilde{T} \inv = J_{1/\theta }\inv T\inv J_\theta  \inv$,
we find that
$$
T\inv = J_{1/\theta} \widetilde{T} \inv J_\theta  \, .
$$
Applying Theorem~\ref{t41} once again, the symbol of $T\inv $ must be in
$\mif $. As a  consequence, $T\inv $ is bounded on $\lrd $. 

Since 
 $T\in \mathrm{Op}(\mif _v
)$ and $T$ is invertible on $\lrd
$, it follows that $T$ is also invertible on $M^{p,q}_\mu (\rd ) $ for every weight
$\mu \in \cM _v$ and $1\leq p,q\leq \infty $. 
\end{proof}

\par

\subsection{An isomorphism theorem for localization operators}

\par

We now combine all steps and formulate and prove our main result, the
isomorphism theorem for \tf\ localization operators with symbols of
superfast growth. 

\begin{tm}\label{t13}
  Let $g\in M^1_{v^2 w}(\rd ) $, $\mu \in \cM _w$ and $m\in \cM _v$ be
  such that $m$ is radial in each time-frequency coordinate and $v$
  satisfies \eqref{GRScond}. Then the 
  localization operator $A^g_m$ is an isomorphism from $M^{p,q}_{\mu }(\rd )$
  onto $M^{p,q}_{\mu /m}(\rd )$ for $1\leq p,q \leq \infty$. 
\end{tm}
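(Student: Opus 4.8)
The plan is to reduce Theorem~\ref{t13} to the canonical case via the composition trick already announced in the introduction, and then to invoke the refined spectral invariance of Theorem~\ref{t12}. First I would set $\theta = m^{1/2}$ and recall from Lemma~\ref{l2} that $A^g_{1/m}A^g_m$ is an isomorphism on $M^2_\theta(\rd)$, and from Lemma~\ref{l:1} that $A^g_m$ maps $M^{p,q}_\mu$ boundedly into $M^{p,q}_{\mu/m}$ (this uses the hypothesis $g\in M^1_{v^2w}$ precisely so that the window lies in the space required for the boundedness estimates on the full parameter range, with $\mu\in\cM_w$ and $m\in\cM_v$). So the only real content is to show that $A^g_m$ is \emph{invertible} between these spaces, not merely bounded.

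The key move is to conjugate so that everything lives on $L^2$, where the machinery of Sj\"ostrand classes applies. Following the introduction, I would form the operator
$$
V = A^{h}_{\sqrt m}\, A^g_{1/m}\, A^g_m\, A^{h}_{1/\sqrt m} = J_{\theta}\,\big(A^g_{1/m}A^g_m\big)\,J_{1/\theta},
$$
using that the canonical localization operators are $J_{\theta}=A^{h}_\theta$. By Theorem~\ref{t7}, $J_{1/\theta}$ is an isomorphism from $L^2$ onto $M^2_\theta$ and $J_\theta$ is an isomorphism from $M^2_\theta$ onto $L^2$; since $A^g_{1/m}A^g_m$ is an isomorphism on $M^2_\theta$ by Lemma~\ref{l2}, the composition $V$ is an isomorphism on $L^2$. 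Next I would control the Weyl symbol of $V$: by Corollary~\ref{p4} the product $A^g_{1/m}A^g_m$ lies in $\mathrm{Op}(\mif_{\tilde v\tilde w})$, and then Theorem~\ref{t41} (applied with the localization operators $J_{\theta^{\pm1}}$ conjugating from the outside, noting $\theta^2=m\in\cM_v$) shows that $V\in\mathrm{Op}(\mif_{v'})$ for an appropriate submultiplicative radial weight $v'$ built from $v$ and $w$.

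With $V$ an isomorphism on $L^2$ and possessing a symbol in the generalized Sj\"ostrand class, Theorem~\ref{spec} yields $V^{-1}\in\mathrm{Op}(\mif_{v'})$, hence $V$ is invertible simultaneously on every $M^{p,q}_{\mu}$ with $\mu$ compatible with $v'$; this is exactly the step that upgrades a single $L^2$-invertibility to invertibility across the whole modulation-space scale. Finally I would \emph{unwind} the factorization: since $J_\theta$ and $J_{1/\theta}$ are isomorphisms between the relevant $M^2$-spaces and, via Theorem~\ref{t12}, between the corresponding $M^{p,q}$-spaces, and since $A^g_{1/m}A^g_m$ is invertible on $M^{p,q}_{\mu}$, writing $A^g_m = (A^g_{1/m})^{-1}(A^g_{1/m}A^g_m)$ forces $A^g_m$ to be an isomorphism from $M^{p,q}_\mu$ onto $M^{p,q}_{\mu/m}$, because $A^g_{1/m}$ maps $M^{p,q}_{\mu/m}$ isomorphically onto $M^{p,q}_\mu$ by the same argument applied with $1/m$ in place of $m$. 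The main obstacle I anticipate is purely bookkeeping of weights: one must verify that all the exponents ($v^2w$ on the window, $v$ on $m$, $w$ on $\mu$, and the intermediate $v^{1/2}$ appearing in Theorem~\ref{t41}) line up so that each invocation of Theorem~\ref{t41}, Corollary~\ref{p4}, and Theorem~\ref{spec} has its hypotheses met; the GRS-condition~\eqref{GRScond} on $v$ is what makes the spectral-invariance step legitimate, and the radial symmetry is what lets us drop the distinction between $v$ and $\tilde v$ throughout.
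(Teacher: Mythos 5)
Your overall strategy coincides with the paper's: form $T=A^g_{1/m}A^g_m$, note that it is an isomorphism on $M^2_\theta(\rd)$ by Lemma~\ref{l2} and has Weyl symbol in $\mif_{\tilde v\tilde w}$ by Corollary~\ref{p4}, conjugate by the canonical isomorphisms $J_{\theta^{\pm1}}$ of Theorem~\ref{t7}, invoke spectral invariance, and finally unwind the two factorizations $T=A^g_{1/m}A^g_m$ and $T'=A^g_mA^g_{1/m}$ to distribute injectivity and surjectivity between the factors. That last unwinding, including the symmetric argument with $1/m$ in place of $m$ on $M^{p,q}_{\mu/m}$, is exactly what the paper does.

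There is, however, one step that does not go through as written. After concluding that $V=J_\theta TJ_{1/\theta}$ is invertible on every $M^{p,q}_\mu$, you transfer this back to $T$ by asserting that $J_\theta$ and $J_{1/\theta}$ are isomorphisms ``between the corresponding $M^{p,q}$-spaces via Theorem~\ref{t12}.'' Theorem~\ref{t7} provides these canonical operators as isomorphisms only at the $M^2$ level; their isomorphism property on the full $M^{p,q}$ scale is itself an instance of the theorem you are proving (applied to the Gaussian window), so using it here is circular. The correct order of operations --- and the actual content of Theorem~\ref{t12}, which you cite but do not use in the form stated --- is: use the conjugation only to conclude that $V$ is invertible on $L^2(\rd)$, deduce from Theorem~\ref{spec} that $V^{-1}\in\Op(\mif_{v'})$ for the appropriate weight, write $T^{-1}=J_{1/\theta}V^{-1}J_\theta$ and apply Theorem~\ref{t41} once more to see that $T^{-1}\in\Op(\mif)$, hence that $T$ itself is invertible on $L^2(\rd)$; \emph{then} apply Theorem~\ref{spec} directly to $T\in\Op(\mif_{\tilde v\tilde w})$ to obtain invertibility of $T$ on all $M^{p,q}_\mu$ with $\mu\in\cM_{vw}$, with no back-conjugation at the $M^{p,q}$ level needed. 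Since you have already assembled exactly the hypotheses of Theorem~\ref{t12} for $T$ (symbol class from Corollary~\ref{p4}, invertibility on $M^2_\theta$ from Lemma~\ref{l2}), the cleanest repair is simply to quote that theorem as a black box; with that substitution your argument matches the paper's proof.
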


\par

\begin{proof}
Set $T=A^g _{1/m} A^g _m$.   We have  already established that 
  \begin{enumerate}
  \item $T=A^g _{1/m} A^g _m$ possesses a
  symbol in $\mif _{\tilde v \tilde w}$ by Corollary~\ref{p4}. 
\item $T$ is invertible on $M^2_\theta (\rd )$ by Lemma~\ref{l2}.  
  \end{enumerate}
These are the assumptions of Theorem~\ref{t12}, and therefore $T$ is
invertible on $M^{p,q}_\mu (\rr d)  $ for every $\mu \in \cM _{w} \subseteq
\cM  _{vw}$ and $1\leq p,q\leq
\infty $. 
The factorization $T=A^g _{1/m} A^g _m$ implies that $A^g_m$ is
one-to-one from $M_{\mu}^{p,q} (\rd )$ to $M^{p,q}_{\mu /m} (\rd )$ and that 
$A^g_{1/m}$  maps $M^{p,q}_{\mu /m} (\rd )$ onto $M^{p,q}_\mu (\rd ) $. 

\par

Now we change the order of the factors and consider the operator $T' = 
A^g _m A^g _{1/m} $ from $M^{p,q}_{\mu / m} (\rd )$ to $M^{p,q}_{\mu / m} (\rd )$
and factoring through $M^{p,q}_{\mu}(\rd )$. Again $T'$ possesses a symbol
in $\mif _{\tilde v \tilde w}$ and is invertible on $M^2_{1/\theta}(\rd )$. With
Theorem~\ref{t12}   we
conclude that $T' $ is invertible on all \modsp s $M^{p,q}_{\mu /m}(\rd )$. 
The factorization of $T' =A^g _m A^g _{1/m} $ now yields that
$A^g_{1/m}$ is one-to-one from $M^{p,q}_{\mu / m} (\rd )$ to  $M^{p,q}_{\mu}(\rd )$
and that $A^g_m$  maps $M^{p,q}_{\mu}(\rd )$ onto  $M^{p,q}_{\mu / m} (\rd )$. 

\par

As a consequence $A_m ^g$ is bijective from $M^{p,q}_{\mu }(\rd )$
  onto $M^{p,q}_{\mu /m}(\rd )$,  and   $A^g_{1/m}$ is    bijective from
  $M^{p,q}_{\mu /m}(\rd )$ onto $M^{p,q}_{\mu }(\rd )$. 
\end{proof}

\par

\section{Consequences for Gabor Multipliers
and Toeplitz Operators on Bargmann Fock Space}

\par

\subsection{Gabor Multipliers}

Gabor multipliers are \tf\ localization operators whose symbols are
discrete measures. Their basic properties are the same, but  the discrete
definition makes them more accessible for numerical computations. 

Let $\Lambda = A\zdd $ for some $A\in \mathrm{GL}(2d,\bR )$ be a
lattice in $\rdd $, $g, \gamma $ suitable window functions and $m$ a
weight sequence defined on $\Lambda$. 
Then the Gabor multiplier $G^{g,\gamma,\Lambda }_m$ is defined to be 
\begin{equation}
  \label{eq:can10}
  G^{g,\gamma,\Lambda }_mf = \sum _{\lambda \in \Lambda } m(\lambda )
  \langle f, \pi (\lambda )g\rangle \pi (\lambda )\gamma \, .
\end{equation}
The boundedness of Gabor multipliers  between \modsp s is formulated
and proved exactly as for \tf\ localization operators. See~\cite{FN03} for a detailed exposition of Gabor
multipliers and ~\cite{CG03} for general boundedness results that
include distributional symbols.  

\begin{prop}\label{p14}
  Let $g ,\gamma \in M^1_{vw} (\rd )$, $m$ be a continuous moderate weight
  function $m\in \cM _v$. Then $  G^{g,\gamma,\Lambda }_m$ is bounded
  from $M_\mu ^{p,q}(\rdd )$ to $M^{p,q}_{\mu /m}(\rd )$. 
\end{prop}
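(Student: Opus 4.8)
The plan is to reduce the boundedness of the Gabor multiplier $G^{g,\gamma,\Lambda}_m$ to the already-established mapping properties of \tf\ localization operators, exploiting the fact that a Gabor multiplier is nothing but a ``sampled'' version of a localization operator. First I would take the \stft\ of $G^{g,\gamma,\Lambda}_m f$ with respect to the Gaussian window ${h}$, just as was done for the continuous localization operator $A^g_m$ in Section~2. Since $\pi(\lambda)\gamma$ appears as the reproducing atom, the computation yields
$$
V_{h}\big( G^{g,\gamma,\Lambda}_m f\big)(w) = \sum_{\lambda \in \Lambda} m(\lambda)\, V_g f(\lambda)\, \langle \pi(\lambda)\gamma, \pi(w){h}\rangle \, .
$$
The goal is to dominate this expression pointwise by a convolution-type kernel, so that the $\Lpqm$-norm of the left-hand side can be controlled by the $L^{p,q}_{\mu/m}$-norm via a (discrete) Young-type inequality on the mixed-norm weighted spaces, which is the mechanism underlying the norm equivalence~\eqref{eq:18}.

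The key steps, in order, are as follows. I would first use that $m$ is $v$-moderate together with the bound~\eqref{eq:25} to replace $m(\lambda)$ by $m(w)$ times a controlled factor $v(\lambda - w)$, thereby transferring the weight $m$ from the summation variable to the output variable $w$; this is exactly the maneuver that converts a $\mu$-weight on the domain into a $\mu/m$-weight on the target. Second, since $g,\gamma \in M^1_{vw}(\rd)$, the cross-ambiguity kernel $\langle \pi(\lambda)\gamma, \pi(w){h}\rangle = V_{h}\gamma(w-\lambda)$ (up to a phase) lies in $L^1_{vw}(\rdd)$, giving a summable, weighted envelope. Third, I would estimate $|V_gf(\lambda)|$ against the samples of the continuous \stft\ and invoke the standard sampling/discretization estimate for functions in \modsp s (boundedness of the sampling operator from $\Lpqm$ into the mixed-norm sequence space $\lpqm(\Lambda)$, valid because ${h}$ and the windows are in $M^1_{vw}$). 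Combining these, the sum becomes a discrete convolution of a weighted sequence in $\lpqm(\Lambda)$ with a weighted $\ell^1$-envelope, and weighted Young's inequality on $\Lambda$ closes the estimate, yielding $\|G^{g,\gamma,\Lambda}_m f\|_{M^{p,q}_{\mu/m}} \lesssim \|f\|_{M^{p,q}_\mu}$.

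I expect the main obstacle to be the bookkeeping of the weights across the sampling step. One must verify that the single hypothesis $g,\gamma \in M^1_{vw}(\rd)$ and $m\in \cM_v$, $\mu\in \cM_w$ simultaneously controls three distinct things: the decay of the analysis window (so that $V_gf$ has summable samples in the $\mu$-weighted sequence space), the decay of the synthesis kernel $V_{h}\gamma$ (so that the convolution envelope is in $L^1_{vw}$), and the moderateness exchange converting $\mu$ to $\mu/m$. Getting the weight exponents to match — so that the product $vw$ absorbs both the factor $v(\lambda-w)$ coming from the moderateness of $m$ and the factor $w$ needed for the $\mu$-moderateness — is where the delicate part lies, though it parallels exactly the analysis already carried out for $A^g_m$ in Lemma~\ref{l:1}. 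In fact, the cleanest route may be to cite the philosophy stated right before Proposition~\ref{p14}: the discrete estimate is ``formulated and proved exactly as for \tf\ localization operators,'' so one can mirror the proof of Lemma~\ref{l:1} line by line, replacing the integral over $\rdd$ by a sum over $\Lambda$ and the integral Young inequality by its discrete counterpart. The remaining endpoint cases $p=\infty$ or $q=\infty$ are handled by the usual replacement of integrals by suprema, exactly as in the definition of the \modsp\ norm.
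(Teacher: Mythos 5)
Your proposal is correct and follows essentially the route the paper itself takes: the paper offers no written proof of Proposition~\ref{p14}, stating only that the boundedness of Gabor multipliers ``is formulated and proved exactly as for \tf\ localization operators'' (with references to \cite{FN03} and \cite{CG03}), which is precisely the line-by-line discretization of Lemma~\ref{l:1} that you carry out. The only point worth making explicit in your third step is that the semi-discrete Young inequality needs the envelope $|V_{{h}}\gamma|\, v w$ to lie in a Wiener amalgam space $W(L^\infty ,L^1)$ rather than merely in $L^1$ (pointwise values on lattice translates matter, not just the integral), which is automatic here because $\gamma \in M^1_{vw}(\rd )$.
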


In the following we will assume that the set $\cG (g, \Lambda )= \{
\pi (\lambda )g: \lambda \in \Lambda \}$ is a
Gabor frame for $\lrd $. This means that the frame operator
$S_{g,\Lambda }f = M^{g,g,\Lambda }  _{1} = \sum _{\lambda \in \Lambda } 
  \langle f, \pi (\lambda )g\rangle \pi (\lambda )g  $ is invertible
  on $\lrd $. From the rich theory of Gabor frames 
  we quote only the following result: If $g\in \mvv $ and $\cG (g,
  \Lambda )$ is a frame for $\lrd $, $1\leq p \leq \infty $, and $m\in
  \cM _v$, then 
  \begin{equation}
    \label{eq:can15}
    \|f\|_{M^p_m} \asymp \Big( \sum_{\lambda \in \Lambda } |\langle f,
    \pi (\lambda )g\rangle |^p m(\lambda )^p\Big) ^{1/p} \, \qquad
    \text{ for all } f\in M^p_m (\rd ).
  \end{equation}
 We refer to ~\cite{book} for a detailed discussion,  the
  proof, and  for further references.

Our main result on Gabor multipliers is the  isomorphism theorem. 

\begin{tm}\label{t15}
Assume that   $g  \in M^1_{v^2w} (\rd )$ and that $\cG (g, \Lambda )$ is
a frame for $\lrd $. If  $m\in \cM _v$ is radial in each \tf\
coordinate,  then $  G^{g,g,\Lambda }_m$ is an 
  isomorphism 
  from $M_\mu ^{p,q}(\rdd )$ onto $M^{p,q}_{\mu /m}(\rd )$ for every
  $\mu \in \cM _w$ and $1\leq p,q\leq \infty $.
\end{tm}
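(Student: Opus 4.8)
The plan is to run the proof of Theorem~\ref{t13} verbatim, with the \tf\ localization operator $A^g_m$ replaced throughout by the Gabor multiplier $G^{g,g,\Lambda}_m$. Write $\theta = m^{1/2}$ and set $T = G^{g,g,\Lambda}_{1/m}\,G^{g,g,\Lambda}_m$. The two structural inputs needed for Theorem~\ref{t12} are: (i) $T$ is invertible on $M^2_\theta(\rd)$, and (ii) $T$ possesses a Weyl symbol in the generalized Sj\"ostrand class $\mif_{vw}$. Boundedness of each factor between the relevant spaces is already guaranteed by Proposition~\ref{p14}, so once (i) and (ii) are in place, Theorem~\ref{t12} gives invertibility of $T$ on every $M^{p,q}_\mu(\rd)$ with $\mu \in \cM_w \subseteq \cM_{vw}$ and $1\le p,q\le\infty$, and a factorization argument finishes the proof.

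For step (i) I would note that $G^{g,g,\Lambda}_m$ is self-adjoint (equal windows, real symbol $m\ge0$) and compute its quadratic form using the frame norm equivalence \eqref{eq:can15} with window $g\in M^1_{v^2w}\subseteq\mvv$ and weight $\theta\in\cM_{v^{1/2}}$:
\[
\langle G^{g,g,\Lambda}_m f, f\rangle = \sum_{\lambda\in\Lambda} m(\lambda)\,|\langle f,\pi(\lambda)g\rangle|^2 = \sum_{\lambda\in\Lambda} \theta(\lambda)^2\,|\langle f,\pi(\lambda)g\rangle|^2 \asymp \|f\|_{M^2_\theta}^2 .
\]
This is the exact discrete analogue of \eqref{eq:3}, so that the argument behind Lemma~\ref{l2} (i.e.\ \cite[Lemma~3.4]{GT10}) applies unchanged and shows that $G^{g,g,\Lambda}_m$ is an isomorphism from $M^2_\theta(\rd)$ onto $M^2_{1/\theta}(\rd)$; consequently $T$ is invertible on $M^2_\theta(\rd)$.

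Step (ii) is the step I expect to be the main obstacle. I would verify the kernel criterion of Theorem~\ref{p3} directly, taking the Gaussian $h$ as reference window. Expanding both multipliers and using that \tfs s commute up to a phase, one gets, with $G=|V_g h|$ and $K=|V_g g|$,
\[
|\langle T\,\pi(z)h,\pi(y)h\rangle| \le \sum_{\lambda,\lambda'\in\Lambda} \frac{m(\lambda)}{m(\lambda')}\,G(\lambda-z)\,K(\lambda'-\lambda)\,G(\lambda'-y).
\]
Since $m\in\cM_v$, the moderateness bound \eqref{eq:25} gives $m(\lambda)/m(\lambda')\le v(\lambda-\lambda')$, and $g\in M^1_{v^2w}$ yields $G,K\in L^1_{v^2w}(\rdd)$, hence $vK\in L^1_{vw}(\rdd)$ and $G\in L^1_{v^2w}\subseteq L^1_{vw}$. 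The remaining and crucial task is to dominate this double \emph{lattice} sum, as a function of $y-z$, by a function $H\in L^1_{vw}(\rdd)$. I would handle the discrete-to-continuous passage by the standard sampling estimate in weighted Wiener amalgam spaces: since $G$ and $vK$ lie in $W(L^\infty,L^1_{vw})$ (a consequence of $g\in M^1_{v^2w}$), the sampled sum over $\Lambda$ is controlled, up to a constant depending on $\Lambda$, by the convolution $G\ast(vK)\ast G^*$, exactly as in \eqref{eq:cnv}. The weighted convolution relation then gives $G\ast(vK)\ast G^*\in L^1_{vw}$, so $H\in L^1_{vw}$. By radiality $\tilde v=v$ and $\tilde w=w$, and Theorem~\ref{p3} yields $T\in\mathrm{Op}(\mif_{vw})$. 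The reversed composition $T'=G^{g,g,\Lambda}_m\,G^{g,g,\Lambda}_{1/m}$ lies in $\mathrm{Op}(\mif_{vw})$ by the same symmetric estimate and is invertible on $M^2_{1/\theta}(\rd)$ by step (i).

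Finally, Theorem~\ref{t12}, applied with ambient weight $vw$ (its GRS-hypothesis holding as in the setting of Theorem~\ref{t13}, and $\theta^2=m\in\cM_v\subseteq\cM_{vw}$), shows that $T$ is invertible on every $M^{p,q}_\mu(\rd)$ with $\mu\in\cM_w$, and likewise $T'$ is invertible on every $M^{p,q}_{\mu/m}(\rd)$, noting $\mu/m\in\cM_{vw}$. The factorization $T=G^{g,g,\Lambda}_{1/m}\,G^{g,g,\Lambda}_m$ then forces $G^{g,g,\Lambda}_m$ to be injective from $M^{p,q}_\mu$ into $M^{p,q}_{\mu/m}$ with $G^{g,g,\Lambda}_{1/m}$ surjective onto $M^{p,q}_\mu$, while the factorization of $T'$ supplies the complementary surjectivity and injectivity. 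Combining the two directions, $G^{g,g,\Lambda}_m$ is a bounded bijection, hence an isomorphism, from $M^{p,q}_\mu(\rd)$ onto $M^{p,q}_{\mu/m}(\rd)$, which is the claim.
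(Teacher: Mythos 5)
Your proposal follows the paper's own route essentially verbatim: the quadratic-form/frame-norm-equivalence argument for invertibility of $T=G^{g}_{1/m}G^{g}_{m}$ on $M^2_\theta$ is exactly the paper's Proposition~\ref{p16}, and the remaining steps (symbol of $T$ in $\mif_{\tilde v\tilde w}$ as in Corollary~\ref{p4}, spectral invariance via Theorem~\ref{t12}, and the two-sided factorization with $T'=G^{g}_{m}G^{g}_{1/m}$) are the paper's proof of Theorem~\ref{t15}. The only difference is that you spell out the lattice-sum kernel estimate and the amalgam-space sampling argument that the paper compresses into the phrase ``as in Corollary~\ref{p4}'', and your version of that estimate is correct.
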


The proof is similar to the proof of Theorem~\ref{t13}, and we only
sketch the necessary modifications. In the following
we fix the lattice $\Lambda $ and choose $\gamma = g$ and drop the
reference to these additional parameter by writing $G^{g,\gamma,
  \Lambda } _m $ as $G^g_m$ in analogy to the localization operator $A^g_m$. 

\begin{prop}\label{p16}
  If $g\in M^1_{v}(\rd )$ and $m\in \cM _v$ and $\theta = m^{1/2}$,  then
  $G^g_m$ is an isomorphism 
  from $M^2_\theta (\rd )$ onto $M^2_{1/\theta } (\rd )$.
\end{prop}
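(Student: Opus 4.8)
The plan is to mirror the structure used for localization operators, exploiting the quadratic-form identity that made Lemma~\ref{l2} work. The key observation is that for a Gabor multiplier with $\gamma=g$ and positive symbol $m$, one has the analogue of \eqref{eq:3}:
\begin{equation*}
\langle G^g_m f, f\rangle = \sum_{\lambda \in \Lambda} m(\lambda) |\langle f, \pi(\lambda) g\rangle|^2 = \sum_{\lambda \in \Lambda} |\langle f, \pi(\lambda) g\rangle|^2 \theta(\lambda)^2 \asymp \|f\|_{M^2_\theta}^2,
\end{equation*}
where the final equivalence is precisely the frame-type norm equivalence \eqref{eq:can15} applied with the weight $m=\theta^2$ and $p=2$. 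This is where the frame hypothesis on $\cG(g,\Lambda)$ enters in an essential way: unlike the integral representation, the discrete sum is only comparable to the modulation-space norm when $\cG(g,\Lambda)$ is a frame.

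First I would verify that $G^g_m$ is self-adjoint and positive on the relevant spaces, so that the quadratic form above genuinely controls the operator. From the identity $\langle G^g_m f, f\rangle \asymp \|f\|_{M^2_\theta}^2$ one reads off that $G^g_m$ is bounded below on $M^2_\theta$ as a map into its dual $M^2_{1/\theta}$. Combined with the boundedness of $G^g_m$ from $M^2_\theta$ into $M^2_{1/\theta}$ (which follows from Proposition~\ref{p14} with $\mu = \theta$ and symbol $m$, giving target weight $\theta/m = 1/\theta$), the two-sided bound
\begin{equation*}
\|f\|_{M^2_\theta} \asymp \|G^g_m f\|_{M^2_{1/\theta}}
\end{equation*}
shows that $G^g_m$ is injective with closed range. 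The positivity and self-adjointness then promote this lower bound into invertibility: the operator $G^g_m$, viewed through the canonical pairing, is a positive self-adjoint operator whose associated form is equivalent to the $M^2_\theta$-norm, hence it is an isomorphism onto $M^2_{1/\theta}$.

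The cleanest route is to imitate the proof of \cite[Lemma~3.4]{GT10} cited for Lemma~\ref{l2} verbatim, since the only structural change is replacing the twisted convolution identity for $V_g(A^g_m f)$ by its sampled counterpart and replacing the norm equivalence \eqref{eq:18} by \eqref{eq:can15}. The boundedness of $G^g_m$ from $M^2_\theta$ to $M^2_{1/\theta}$ and of $G^g_{1/m}$ from $M^2_{1/\theta}$ to $M^2_\theta$ follows directly from Proposition~\ref{p14}, and the quadratic-form equivalence handles the lower bounds in both directions.

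The main obstacle I anticipate is not the form identity itself but confirming that the frame norm equivalence \eqref{eq:can15}, which is stated for the unweighted frame operator with window $g \in M^1_v$, applies with the moderate weight $\theta^2$ appearing as the symbol. This requires that $\theta = m^{1/2}$ be $v^{1/2}$-moderate (so that $\theta^2 = m \in \cM_v$, which is exactly the hypothesis), and that the window $g \in M^1_v$ meet the regularity demanded by \eqref{eq:can15}. Checking that these hypotheses align — so that the sampled quadratic form is genuinely comparable to $\|f\|_{M^2_\theta}^2$ and not merely bounded above — is the one place where the discrete setting could fail were the frame or moderateness assumptions weakened. Once that comparability is in hand, the invertibility argument is identical to the continuous case and the isomorphism follows.
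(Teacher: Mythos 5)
Your proposal follows essentially the same route as the paper: boundedness from Proposition~\ref{p14}, the discrete quadratic-form identity $\langle G^g_m f,f\rangle=\sum_\lambda m(\lambda)|\langle f,\pi(\lambda)g\rangle|^2\asymp\|f\|^2_{M^2_\theta}$ via the frame norm equivalence \eqref{eq:can15}, the resulting lower bound giving injectivity and closed range, and self-adjointness giving dense range and hence surjectivity. Your added care about whether \eqref{eq:can15} applies with the weight $m=\theta^2$ is well placed but resolved exactly by the standing hypotheses ($g\in M^1_v$, $m\in\cM_v$, $\cG(g,\Lambda)$ a frame), so there is no gap.
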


\begin{proof}
  By Proposition \ref{p14} $G_m^g $ is bounded from $M^{2}_{\theta}$ to
  $M^{2}_{\theta /m} = M^{2}_{1/\theta}$ and thus
$$
\|G_m^g f \|_{M^{2}_{1/\theta}} \leq C \|f\|_{M^2_\theta} \, .
$$
Next we use the characterization of $M^2_\theta $ by Gabor frames and
relate it to Gabor multipliers: 
\begin{equation}
  \label{eq:3a}
\langle G_m^g f ,f \rangle =  \sum _{\lambda \in \Lambda }
m(\lambda)  |\langle f, \pi (\lambda )g\rangle |^2 \asymp
\|f\|_{M^2_\theta }^2 \, . 
\end{equation}
This identity implies that $G_m ^g $ is one-to-one on $M^2_\theta $
and that $\|G_m^g f \|_{M_{1/\theta}}$ is an equivalent norm on
$M^2_\theta $. Since $G^g_m$ is self-adjoint, it has dense range in
$M^2_{1/\theta }$, whence $G^g_m$ is onto as well.  
 \end{proof}

Using the weight $1/m$ instead of $m$  and $1/\theta $ instead of
$\theta $, we see that $G_{1/m}^g $ is an isomorphism from
$M^2_{1/\theta } (\rd )$ onto $M^2_\theta (\rd )$. 

\begin{proof}[Proof of Theorem~\ref{t15}]
 We proceed as  in the proof of Theorem~\ref{t13}. Define the operators
  $T= G_{1/m}^g G_m^g $ and $T' = G_m^g G_{1/m}^g  $. By Proposition~\ref{p14} $T$    maps
$M^2_\theta (\rd )$ to $M^2_\theta (\rd )$, and since $T$ is a composition of two
isomorphisms, $T$ is an isomorphism on  $M^2_\theta (\rd )$. Likewise $T'$
is an isomorphism on $M^2_{1/\theta}(\rd )$.

As in Corollary~\ref{p4} we verify that the symbol of $T$ is in $\mif
_{\tilde v \tilde w}
(\rdd )$. Theorem~\ref{t12} then  asserts that $T$ is invertible on $\lrd
$ and  the general spectral invariance implies that $T$ is an
isomorphism on $M^{p,q}_\mu (\rd )$. Likewise $T'$ is an
isomorphism on $M^{p,q}_{\mu/m}(\rd )$.  This means that each of the factors of $T$ must be
an isomorphism on the correct space. 
  \end{proof}

\subsection{Toeplitz Operators}

 The Bargmann-Fock space $\cF = \cF ^2(\cd )$ is the Hilbert space of
 all entire functions of $d$ variables  such that
 \begin{equation}
 \label{Fock_definition}
 \|F\|^2_\cF=\int_ \cd |F(z)|^2 e^{-\pi |z|^2} dz <\infty \, .
 \end{equation}
 Here $dz$ is the  Lebesgue measure on $\cd = \rdd $.

Related  to the Bargmann-Fock space $\cF ^2$   are spaces of entire
functions satisfying weighted integrability conditions. 
Let $m$ be a moderate weight on $\cd $ and $1\leq p,q\leq
\infty$. Then the space $\cF ^{p,q}_m (\cd )$   is the Banach space of
all entire functions of $d$ complex variables, such that 
 \begin{equation}
 \label{Fock_definition-b}
 \|F\|_{\cF^{p,q}_m}^q =\int_{\rd } \Big(\intrd  |F(x+iy)|^p m(x+iy)^p
 e^{-p\pi |x+iy|^2/2} dx \Big)^{q/p}  dy <\infty \, .
 \end{equation}

\par

Let $P$ be the orthogonal projection from $L^2(\cd , \mu )$ with
Gaussian measure $d\mu (z)   = e^{-\pi |z|^2} \, dz$ onto $\cF ^2(\cd
)$.   Then  $P$ is  given by
the formula
\begin{equation}
  \label{eq:can11}
  PF(w) = \int _{\cd } F(z) e^{\pi \overline{z}w} e^{-\pi |z|^2} \, dz \, .
\end{equation}
We remark that a  function on $\rdd $ with a certain growth  is entire
\fif\ it satisfies $F = PF$.
 
The classical Toeplitz operator on Bargmann-Fock space with a symbol
$m$  is defined by   $T_m F = P(m F)$ for $F\in \cF ^2(\cd )$. Explicitly
$T_m$ is given by   the formula
\begin{equation}
  \label{eq:can12}
  T_m F(w)  = \int_{\cd }  m(z) F(z) e^{\pi \overline{z}w} \, e^{-\pi |z|^2}  \, dz \, .
\end{equation}
See~\cite{bargmann61,Berezin71,BCo87,BCo94,daub88,folland89} for a
sample of references. 
 
In the following we assume that the symbol of a Toeplitz operator is
continuous and radial in each coordinate, i.e., $m(z_1, \dots , z_d) =
m_0 (|z_1|, \dots |z_d|)$ for some continuous function $m_0$ from $\bR
_+^d$ to $\bR _+$.

\begin{tm}\label{t20}
  Let  $\mu \in \cM _w$, and let $m\in \cM _v$ be a  continuous moderate
  weight function such that one of the following conditions is  fulfilled:
  \begin{enumerate}
  \item Either $m$ is radial in each coordinate, 
  
  \vspace{0.1cm}
  
  \item or  $m$ is of polynomial type.
 \end{enumerate}
  Then the   Toeplitz operator $T_m$ is an isomorphism from $\cF^{p,q}_{\mu }(\cd
  )$   onto $\cF^{p,q}_{\mu /m}(\cd )$ for $1\leq p,q \leq \infty$. 
\end{tm}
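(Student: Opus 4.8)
The plan is to transfer the isomorphism theorem for \tf\ localization operators (Theorem~\ref{t13}) to the Bargmann-Fock setting by exploiting the well-known unitary equivalence between localization operators with Gaussian window and Toeplitz operators on Fock space. First I would recall the \emph{Bargmann transform} $\cB$, which maps $\lrd $ unitarily onto $\cF ^2(\cd )$ and, more generally, intertwines the \modsp s $M^{p,q}_\mu (\rd )$ with the Fock-type spaces $\cF ^{p,q}_\mu (\cd )$ as an isomorphism of Banach spaces. Concretely $\cB f(z) = e^{-\pi |z|^2/2} V_{h} f(\bar z)$ up to the standard phase and conjugation conventions, so the norm equivalence $\|f\|_{M^{p,q}_\mu } \asymp \|\cB f\|_{\cF ^{p,q}_\mu }$ is immediate from the definitions~\eqref{eq:can16} and~\eqref{Fock_definition-b}. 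The key algebraic fact I would state (and cite to the references already listed, e.g.~\cite{daub88,folland89}) is the intertwining identity
$$
\cB \, A^{h} _m \, \cB \inv = T_m \, ,
$$
valid for radial symbols, which identifies the canonical localization operator $A^{h} _m = J_m$ with the Toeplitz operator $T_m$ under the Bargmann transform.

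With this dictionary in place the proof is essentially a transport argument. In case~(1), where $m$ is radial in each coordinate, Theorem~\ref{t13} (applied with the Gaussian window $g={h} $, which lies in $M^1_{v^2w}$ for every admissible $v,w$) asserts that $A^{h} _m$ is an isomorphism from $M^{p,q}_\mu (\rd )$ onto $M^{p,q}_{\mu /m}(\rd )$. Conjugating by $\cB $ and using that $\cB $ is an isomorphism $M^{p,q}_\mu \to \cF ^{p,q}_\mu $ and simultaneously $M^{p,q}_{\mu /m} \to \cF ^{p,q}_{\mu /m}$, I would conclude that $T_m = \cB A^{h} _m \cB \inv $ is an isomorphism from $\cF ^{p,q}_\mu (\cd )$ onto $\cF ^{p,q}_{\mu /m}(\cd )$. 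Case~(2), where $m$ is of polynomial type but not necessarily radial, is handled identically except that the underlying isomorphism theorem for localization operators is the one from the companion paper~\cite{GT10} rather than Theorem~\ref{t13}; I would simply cite that result and run the same conjugation.

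The main obstacle, and the step deserving the most care, is the verification of the two intertwining facts, namely that $\cB $ is a Banach-space isomorphism between the weighted \modsp s and the weighted Fock spaces \emph{for the full range $1\leq p,q\leq \infty $ and all moderate weights}, and that $\cB A^{h} _m \cB \inv = T_m$ holds for the class of symbols under consideration. The norm equivalence requires matching the Gaussian-windowed \stft\ appearing in~\eqref{eq:can16} against the weighted $L^{p,q}$ integral over $\cd $ in~\eqref{Fock_definition-b}, keeping track of the factor $e^{-\pi |z|^2/2}$ and the substitution $z \mapsto \bar z$; this is where the precise conventions must be reconciled so that the weight $\mu (z)$ on the \tf\ side matches $m(z)$ on the Fock side. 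Once these two identities are established—both of which are standard but must be stated with the correct normalizations—the theorem follows with no further analysis, since invertibility is preserved under conjugation by an isomorphism. I would therefore organize the proof as a short lemma recording the Bargmann intertwining, followed by the two-line conjugation argument for each of the two cases.
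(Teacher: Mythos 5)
Your proposal is correct and follows essentially the same route as the paper: transport Theorem~\ref{t13} (resp.\ the polynomial-type result of \cite{GT10}) through the Bargmann transform, using the norm equivalence $\|f\|_{M^{p,q}_\mu}\asymp\|\cB f\|_{\cF^{p,q}_{\mu'}}$ and the intertwining $\cB J_m = T_{m'}\cB$ of Proposition~\ref{p21}. The only point you should make explicit is the conjugation $m'(z)=m(\overline z)$ appearing in the intertwining (harmless for radial $m$, and absorbed by relabeling in the polynomial-type case), which you correctly flag but leave implicit in your identity $\cB A^{h}_m\cB\inv=T_m$.
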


The formulation of   Theorem~\ref{t20} looks  similar to the main theorem
about \tf\ localization operators. In fact, after a suitable
translation of concepts,  it is  a special case of Theorem~\ref{t13}.

To explain the connection, we recall the Bargmann transform  that maps distributions
on $\rd $ to entire functions on $\cd $. 
\begin{equation}
  \label{eq:can13b}
   \cB f(z)=F(z)= 2^{d/4}e^{-\pi z^2 /2}\int_{\rd } f(t)e^{-\pi
    t^2}e^{2\pi t \cdot z} dt \qquad    z\in \cd \, . 
\end{equation}
If $f$ is a distribution, then we interpret the integral as the
action of $f$ on the function $e^{-\pi
    t^2}e^{2\pi t\cdot  z}$.

The connection to \tfa\ comes from the fact that the Bargmann
transform is just a \stft\ in disguise~\cite[Ch.~3]{book}.  As before, we use the
normalized Gaussian ${h} (t) = 2^{d/4} e^{-\pi t^2}$ as a
window. Identifying the pair $(x,\xi)\in \rd \times \rd $ with the
complex vector $z= x+i\xi \in \cd$, the \stft\ of a function $f$ on $\rd
$ with respect to ${h}$ is 
\begin{equation}\label{eq:26b}
  V_{{h} } f(\overline{z}) = e^{\pi i x\cdot \xi } \cB f(z) e^{-\pi
      |z|^2/2} \, .
\end{equation}
In particular, the Bargmann transform of the \tfs\ $\pi (w){h} $ is
given by 
\begin{equation}
  \label{eq:27}
  \cB (\pi (w){h} )(z) = e^{-\pi i u\cdot \eta } e^{\pi \overline{w}z}
  e^{-\pi |w|^2/2} \, \quad  w,z\in \cd , w= u+i\eta \, .
\end{equation}
It is a basic fact that the Bargmann transform   is a unitary mapping
from  $L^2(\bR)$ onto the Bargmann-Fock space $\cF ^2(\cd )$. 
Furthermore, the Hermite functions $h_\alpha , \alpha \geq 0$ are
mapped to the normalized monomials $e_\alpha (z) = \pi ^{|\alpha|/2}
(\alpha !)^{-1/2} z^\alpha  $. 

Let $m'(z) = m(\overline{z})$. Then \eqref{eq:26b} implies that the
Bargmann transform $\cB $ maps $\Mmpqd $ isometrically to $\cF
^{p,q}_{m'}(\cd  )$.
%
%
%
%
%
By straight-forward arguments it follows that  the Bargmann transform
maps the \modsp\  $\Mmpqd $ onto the 
Fock space $\cF ^{p,q}_{m'}(\cd )$~\cite{book,signahltoft}. 

The connection between \tf\ localization operators and Toeplitz
operators on the Bargmann-Fock space is given by the following
statement.

\begin{prop} \label{p21}
Let $m$ be a moderate weight function on $\rdd \simeq \cd $ and set
$ m' (z) = m(\overline{z})$. The Bargmann transform intertwines $J_m$
and $T_{ m'}$, i.e., for all $f \in \lrd $ (or $f\in
(S^{1/2}_{1/2})^*$) we have 
\begin{equation}
  \label{eq:28}
  \cB (J_m f) = T_{ m'} (\cB f ) \, .
\end{equation}
\end{prop}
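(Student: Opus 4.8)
The plan is to verify the operator identity \eqref{eq:28} \emph{weakly}, testing both sides against the family $\{\cB k\}$, and thereby to sidestep the vector-valued integral defining $J_m$ entirely. Since $\cB$ is unitary from $\lrd$ onto $\cF^2(\cd)$, the identity $\cB(J_m f) = T_{m'}(\cB f)$ is equivalent to
\[
\langle \cB(J_m f), \cB k\rangle_{\cF} = \langle T_{m'}(\cB f), \cB k\rangle_{\cF} \qquad \text{for all } k
\]
in a suitable test space, and by unitarity the left side equals $\langle J_m f, k\rangle_{\lrd}$, which is already available in closed form through the weak definition \eqref{eq:f1}.

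First I would expand $\langle J_m f, k\rangle = \langle m V_{h} f, V_{h} k\rangle_{\lrdd} = \intrdd m(z)\, V_{h} f(z)\, \overline{V_{h} k(z)}\, dz$. The coordinatewise conjugation $z \mapsto \bar z$ --- a reflection in each frequency variable, hence Lebesgue-measure preserving and $|z|^2$-preserving --- recasts this as $\intrdd m(\bar z)\, V_{h} f(\bar z)\, \overline{V_{h} k(\bar z)}\, dz$, which is exactly the form in which the dictionary \eqref{eq:26b} applies verbatim. Inserting \eqref{eq:26b}, the phase $e^{\pi i x\cdot\xi}$ attached to $V_{h} f(\bar z)$ cancels the conjugate phase from $\overline{V_{h} k(\bar z)}$, while the two half-Gaussians $e^{-\pi|z|^2/2}$ combine into the full weight $e^{-\pi|z|^2}$. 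What survives is
\[
\langle J_m f, k\rangle = \intrdd m(\bar z)\, \cB f(z)\, \overline{\cB k(z)}\, e^{-\pi|z|^2}\, dz = \langle m'\, \cB f,\, \cB k\rangle_{L^2(\cd,\mu)},
\]
where $m'(z) = m(\bar z)$ has appeared precisely through the change of variables.

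It remains to recognize this pairing as the Toeplitz form. Because $P$ is the orthogonal projection of $L^2(\cd,\mu)$ onto $\cF^2$ and $\cB k \in \cF^2$, self-adjointness of $P$ gives $\langle T_{m'}(\cB f), \cB k\rangle_{\cF} = \langle P(m'\cB f), \cB k\rangle = \langle m'\cB f,\, P\,\cB k\rangle = \langle m'\cB f,\, \cB k\rangle$, matching the display above. Hence $\langle \cB(J_m f), \cB k\rangle_{\cF} = \langle T_{m'}(\cB f), \cB k\rangle_{\cF}$ for every $k$; since $\cB$ is onto, the functions $\cB k$ exhaust $\cF^2$, and \eqref{eq:28} follows.

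The step needing genuine care --- the main obstacle --- is the convergence of these pairings and the legitimacy of the change of variables and Fubini in the full generality claimed, namely $f \in (S^{1/2}_{1/2})^*$ and a moderate symbol $m$ of possibly super-polynomial growth or decay. I would dispose of this by first proving the identity on the dense subspace of finite Hermite combinations, where $\cB h_\alpha = e_\alpha$ is an explicit monomial and every integral converges absolutely, and then extending by continuity using the boundedness of $J_m$ and of $T_{m'}$ (Lemma~\ref{l:1} and its Fock-space counterpart) together with the fact that $\cB$ is an isometric isomorphism between the relevant weighted spaces. As a sanity check in the radial case one can bypass the computation altogether: Proposition~\ref{p9} gives $J_m h_\alpha = \tau_\alpha(m)\, h_\alpha$, so it suffices to confirm $T_{m'} e_\alpha = \tau_\alpha(m)\, e_\alpha$; this shortcut, however, rests on the orthogonality of monomials and therefore works only when $m$ is radial.
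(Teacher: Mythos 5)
Your proposal is correct and is essentially the paper's argument: both rest on the identity \eqref{eq:26b} applied after the change of variables $z\mapsto\overline z$, with the phases cancelling and the two half-Gaussians combining into the weight $e^{-\pi|z|^2}$. The only difference is that the paper specializes the test vectors to the coherent states $\pi(\overline w){h}$, whose Bargmann transforms are the reproducing kernels, so the pointwise formula \eqref{eq:can12} for $T_{m'}\cB f(w)$ drops out directly, whereas you pair against arbitrary $\cB k$ and invoke self-adjointness of $P$ — a cosmetic variation (and your remarks on justifying convergence via Hermite expansions are a sensible supplement to the paper's admittedly formal calculation).
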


\begin{proof}
  This fact is well-known~\cite{Cob01,daub88}. For completeness
  and consistency of notation, we provide the formal calculation.

We take the \stft\   of  $J_m= A^{h} _m$  with respect to  ${h}
$. On the one hand we obtain that
\begin{equation}
  \label{eq:29}
   \langle  J_m f, \pi (\overline{w}){h} \rangle  =  e^{\pi i u\cdot \eta }
 \cB (J_mf)(w) \, e^{-\pi |w|^2/2} \, ,
\end{equation}
and on the other hand, after substituting~\eqref{eq:26b} and
\eqref{eq:27}, we obtain that 
\begin{eqnarray}
   \langle  J_m f, \pi (\overline{w}){h} \rangle & = &
\intrdd m(\overline{z}) V_{h} f (\overline{z})
 \overline{V_{h} ( \pi (\overline{w})h)(\overline{z})}  \, dz \notag \\
&=&  \int _{\cd } m(\overline{z}) \cB f(z) \,   \overline{\cB (\pi (\overline{w}){h}
  )(\overline{z})} e^{-\pi |z|^2} \, dz    \label{eq:can114} \\
&=& e^{\pi i u\cdot \eta } \int _{\cd } m(\overline{z}) \, \cB f(z) e^{\pi
  \overline{z}w} \, e^{-\pi |z|^2} \, dz \,\, e^{-\pi |w|^2/2} \, . \notag 
\end{eqnarray}
Comparing \eqref{eq:29} and \eqref{eq:can114} we obtain that
$$
\cB (J_mf)(w) = \int _{\cd } m(\overline{z}) \cB f(z) e^{\pi \overline{z}w}\,
e^{-\pi |z|^2} \, dz = T_{ m'} \cB f(w) \, .
$$
\end{proof}

\begin{proof}[Proof of Theorem~\ref{t20}]
First assume that (1) holds, i.e., $m$ is radial in each variable. The Bargmann transform is an isomorphism between the \modsp\ $\Mmpqd $ and
the Bargmann-Fock space $\cF ^{p,q}_{m'}(\cd )$ for arbitrary moderate weight
function $m$ and $1\leq p,q\leq\infty $. By Theorem~\ref{t13}  the canonical
localization operator $J_m$ is an isomorphism from $M^{p,q}_\mu (\rd ) $ onto
$M^{p,q}_{\mu /m} (\rd )$. Since $T_{ m'}$ is a composition of three
isomorphism (Proposition~\ref{p21} and \eqref{diagram2}), the Toeplitz
operator $T_{  m'}$ is an isomorphism from $\cF _{\mu '} ^{p,q}(\cd )$ onto
$\cF _{\mu ' / m'} ^{p,q}(\cd )$. 
\begin{equation} \label{diagram2}
\begin{matrix}
&\cF _{\mu '} ^{p,q}   & \stackrel{T_{ m'}}{\longrightarrow} & \cF
_{\mu '/ m'} ^{p,q} &
\cr
&\uparrow \cB  & &\uparrow \cB &
\cr
 &M^{p,q}_\mu  \,  &\stackrel{J_m}{\longrightarrow} &M^{p,q}_{\mu /m}  &
\end{matrix} \, .
\end{equation}
Finally replace $m'$ and $\mu'$ by $m$ and $\mu$. 

\par

By using Theorem 3.2 in \cite{GT10} instead of Theorem \ref{t13}, the same arguments show that the result follows when (2) is fulfilled.
  \end{proof}

\def\cprime{$'$} \def\cprime{$'$} \def\cprime{$'$} \def\cprime{$'$}
  \def\cprime{$'$}

 \bibliographystyle{abbrv}
 \bibliography{general,new}

\end{document}